\documentclass{amsart}
\usepackage[margin=1in]{geometry}
\usepackage{etex} 

\usepackage{amsthm,amssymb,amsmath,latexsym, amscd, mathtools}
\input cyracc.def

\usepackage{thmtools}
\usepackage{thm-restate}

\usepackage{tikz}
\usepackage{tikz-cd}
\usepackage{caption}
\usetikzlibrary{decorations.markings}

\newtheorem{theorem}{Theorem}[section]
\newtheorem{lemma}[theorem]{Lemma}
\newtheorem{proposition}[theorem]{Proposition}
\newtheorem{definition}[theorem]{Definition}

\newtheorem{corollary}[theorem]{Corollary}
\newtheorem{intromain}{Theorem}
\newtheorem{textmain}{Theorem}
\theoremstyle{remark}
\newtheorem{remark}[theorem]{Remark}
\newtheorem{example}[theorem]{Example}
\newtheorem{problem}[theorem]{Problem}
\newtheorem{question}[theorem]{Question}
\newtheorem{construction}[theorem]{Construction}


\newcommand{\QQ}{\mathbb{Q}}
\newcommand{\RR}{\mathbb{R}}
\renewcommand{\SS}{\mathbb{S}} 

\newcommand{\ZZ}{\mathbb{Z}}

\newcommand{\vA}{\mathcal{A}}
\newcommand{\vB}{\mathcal{B}}

\newcommand{\vD}{\mathcal{D}}

\newcommand{\vG}{\mathcal{G}}

\newcommand{\vL}{\mathcal{L}}

\newcommand{\vR}{\mathcal{R}}


\newcommand{\fI}{\mathfrak{I}}

\newcommand{\fL}{\mathfrak{L}}

\newcommand{\fQ}{\mathfrak{Q}}

\newcommand{\id}{\text{id}}
\newcommand{\lra}{\longrightarrow}

\newcommand{\ol}[1]{\overline{#1}}

\newcommand{\sm}{\setminus}
\newcommand{\wt}[1]{\widetilde{#1}}

\newcommand{\LOs}{\mathbf{LO_*}}
\newcommand{\BigL}{\mathbf{BigLO_*}}
\newcommand{\Circ}{\mathbf{Circ}}
\newcommand{\BigC}{\mathbf{BigCirc}}

\title[Amalgamating circularly-ordered groups]{Free products of circularly-ordered groups with amalgamated subgroup}




\date{\today}

\author[Adam Clay]{Adam Clay}
\address{Department of Mathematics\\
University of Manitoba \\
Winnipeg \\
MB Canada R3T 2N2} \email{Adam.Clay@umanitoba.ca}
\urladdr{http://server.math.umanitoba.ca/~claya/} 

\author[Tyrone Ghaswala]{Tyrone Ghaswala}
\address{Department of Mathematics\\
University of Manitoba \\
Winnipeg \\
MB Canada R3T 2N2}
\email{ty.ghaswala@gmail.com}
\urladdr{http://server.math.umanitoba.ca/~ghaswalt/}

\begin{document}

\maketitle

\begin{abstract}
This paper gives necessary and sufficient conditions that the free product with amalgamation of circularly-ordered groups admit a circular ordering extending the given orderings of the factors.  Our result follows from establishing a categorical framework that allows the problem to be restated in terms of amalgamating certain left-ordered central extensions, where we are able to apply work of Bludov and Glass.  
\end{abstract}

\section{Introduction}

A group $G$ is \emph{left-orderable} if its elements admit a strict total ordering that is invariant under multiplication from the left.   A group is called \emph{circularly-orderable} if $G$ admits an \emph{orientation cocycle} $c: G^3 \rightarrow \{ 0, \pm1 \}$ that is invariant under multiplication from the left.  When $G$ is countable, these properties are equivalent to admitting faithful order-preserving actions by homeomorphisms on $\RR$ and $\SS^1$ respectively.

Understanding the behaviour of left orderability and circular orderability with respect to various group-theoretic constructions (such as direct products, extensions, free products and free products with amalgamation) is one of the basic questions which has, at times, proved to be an obstacle to a number of applications.  For example, it was not until recent years that Bludov and Glass  \cite[Theorem A]{BV09} provided necessary and sufficient conditions that the free product with amalgamation of a family of left-ordered groups be left-orderable, and that it admit an ordering extending the given orderings of the factors.   Their work was readily applied to the solvability of the word problem in left-orderable groups \cite[Theorem E]{BV09}, was used to left-order the fundamental groups of many $3$-manifolds \cite{BC17}\cite{CLW13}\cite{CM13}, and was also extended to give conditions that an arbitrary graph of groups with left-orderable vertex groups be left-orderable \cite{Chiswell11}.  

This paper further builds on the results of Bludov and Glass to determine necessary and sufficient conditions that the free product with amalgamation of an arbitrary family of circularly-ordered groups be circularly-orderable, with an ordering that extends the given orderings of the factors.  Such conditions are given in Propositions \ref{bg-circ-forward} and \ref{bg-circ-back}.

Our approach is to observe that certain classical lifting and quotient constructions, which allow one to pass from a circularly-ordered group $(G, c)$ to a left-ordered cyclic central extension $\widetilde{G}$ (and vice versa), behave functorially.  In fact, these two functors provide an equivalence between appropriately defined categories $\LOs$ and $\Circ$ of left-ordered and circularly-ordered groups respectively.  Under such a setup, one might expect that amalgamated free products of circularly-ordered groups will correspond to certain colimits in $\Circ$, and that these colimits would be carried via the categorical equivalence to colimits in $\LOs$---where one can then apply the results of Bludov and Glass to construct a left ordering that descends to a circular ordering on the original amalgamated free product.

While this is roughly the correct idea, it turns out that the categories $\LOs$ and $\Circ$ do not admit colimits (though it does turn out that each is a tensor category when equipped with a certain colimit-like operation defined in \cite{BS15}).  We therefore embed these categories in larger categories where the desired colimits exist, allowing us to pursue the line of proof above on sound mathematical footing.   As a result, we give necessary and sufficient conditions that a free product with amalgamation of circularly-ordered groups $\{(G_i, c_i)\}_{i \in I}$ be circularly-ordered, by examining certain left-ordered cyclic central extensions $\{(\widetilde G_i, <_{c_i}) \}_{i \in I}$.  If $H_i \subset G_i$ is a subgroup for each $i \in I$ and $\phi_i :H \rightarrow H_i$ are order-preserving isomorphisms with a circularly-ordered group $(H, d)$ for all $i \in I$, then these amalgamating isomorphisms will lift to give $\widetilde \phi_i : \widetilde{H} \rightarrow \widetilde H_i$, as the lifting construction is functorial.  We prove:

\begin{intromain}
\label{intro-main-theorem}
Suppose $(G_i,c_i)$ are circularly-ordered groups for $i \in I$, each equipped with a subgroup $H_i \subset G_i$ and an order-preserving isomorphism $\phi_i:(H,d) \to (H_i,c_i)$ from a circularly-ordered group $(H,d)$.  The following are equivalent:
\begin{enumerate}
\item The group $*_{i \in I} G_i  (H_i \stackrel{\phi_i}{\cong} H)$ admits a circular ordering $c$ which extends the orderings $c_i$ of $G_i$ for all $i \in I$.
\item The group $*_{i \in I} \widetilde G_i  (\widetilde H_i \stackrel{\widetilde\phi_i}{\cong}\widetilde{H})$ admits a left ordering $<$ which extends each of the left orderings $<_{c_i}$ of $\widetilde G_i$ for $i \in I$.
\end{enumerate}
\end{intromain}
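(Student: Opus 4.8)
The plan is to derive the equivalence of (1) and (2) as a formal consequence of the categorical equivalence between $\BigC$ and $\BigL$ together with the existence of colimits in these enlarged categories, both of which I take as given from the earlier development. Write $P = *_{i\in I} G_i\,(H_i \stackrel{\phi_i}{\cong} H)$ and $Q = *_{i\in I} \widetilde G_i\,(\widetilde H_i \stackrel{\widetilde\phi_i}{\cong} \widetilde H)$ for the two amalgamated free products in the statement. The starting observation is that each of these groups is the colimit, in the category of groups, of the diagram consisting of the factors together with the amalgamating maps $H \xrightarrow{\phi_i} H_i \hookrightarrow G_i$ (respectively $\widetilde H \xrightarrow{\widetilde\phi_i} \widetilde H_i \hookrightarrow \widetilde G_i$). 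I would first organize the circularly-ordered data into a diagram $\mathcal D$ in $\Circ \subset \BigC$, whose objects are the $(G_i,c_i)$ and $(H,d)$ and whose maps are the order-preserving $\phi_i$, and argue that the colimit of $\mathcal D$ in $\BigC$ has underlying group $P$.

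Next I would apply the lifting functor $L \colon \BigC \to \BigL$ implementing the equivalence. Since $L$ acts on objects by $(G,c)\mapsto(\widetilde G,<_c)$ and is functorial, it carries $\mathcal D$ to the diagram $\widetilde{\mathcal D}$ with objects $(\widetilde G_i,<_{c_i})$, $(\widetilde H,<_d)$ and amalgamating maps $\widetilde\phi_i = L(\phi_i)$; here functoriality of the lift is precisely what guarantees that the lifted amalgamating isomorphisms are the $\widetilde\phi_i$ named in the theorem. Because an equivalence of categories preserves all colimits, $L$ sends the colimit of $\mathcal D$ to the colimit of $\widetilde{\mathcal D}$, whose underlying group is $Q$. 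Thus the two colimit objects correspond under $L$, and in particular the colimit of $\mathcal D$ lies in the honest subcategory $\Circ$ if and only if the colimit of $\widetilde{\mathcal D}$ lies in $\LOs$.

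The remaining point is to match these two ``honesty'' conditions with statements (1) and (2). I would show that the colimit of $\mathcal D$ lies in $\Circ$, with the structure maps from the factors being the inclusions, exactly when $P$ admits a genuine circular ordering $c$ restricting to $c_i$ on each $G_i$; and dually that the colimit of $\widetilde{\mathcal D}$ lies in $\LOs$ exactly when $Q$ admits a left ordering $<$ restricting to $<_{c_i}$ on each $\widetilde G_i$. Combined with the previous paragraph, this gives (1) $\iff$ (2).

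The step I expect to be the main obstacle is the verification underlying the last paragraph: that the forgetful functors $\BigC\to\mathbf{Grp}$ and $\BigL\to\mathbf{Grp}$ send these colimits to $P$ and $Q$ (so that the underlying groups are as claimed), and that membership in $\Circ$ and $\LOs$ is equivalent to the existence of total circular, respectively linear, orders extending the given factor orderings. Concretely, this is where one must check that the central $\Z$ summands of the $\widetilde G_i$ are all identified under the $\widetilde\phi_i$, so that $Q$ is genuinely a cyclic central extension of $P$, and that cofinality of this central $\Z$---needed for the left ordering on $Q$ to descend to a circular ordering on the quotient---is inherited from the factors. Once the equivalence and the existence of colimits in $\BigC$ and $\BigL$ are in hand, everything else is formal.
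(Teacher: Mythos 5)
Your overall architecture---embed the amalgamation diagram in $\BigC$, transport its colimit along the equivalence $\fL$, and read off the two statements---is exactly the paper's, but your final reduction contains a genuine error. You claim that statement (1) holds exactly when ``the colimit of $\mathcal D$ lies in the honest subcategory $\Circ$,'' and dually for (2) and $\LOs$. This is false, and the paper's own counterexample at the start of Section \ref{amalgamation} already refutes it in the simplest case: take $H = \{id\}$ and $G_1 = G_2 = \ZZ$ with the standard circular ordering. Then (1) holds (the free group $F_2$ carries a circular ordering extending the factors), yet no colimit of the diagram exists in $\Circ$ at all, and the colimit in $\BigC$ is not isomorphic to any object in the image of $\fI_C$. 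The reason is structural: by Lemma \ref{BigC-colimit} the colimit of $\mathcal D$ in $\BigC$ is $(\vG_\vD, T)$ where $T$ consists of \emph{all} normalized $2$-cocycles restricting to the $f_{c_i}$, and $T$ is generically infinite (adding $\partial d$ for any $d:\vG_\vD \to \ZZ$ whose restriction to each $G_i$ is a homomorphism preserves membership in $T$) and contains many cocycles that are not of the form $f_c$ for any circular ordering $c$. Since isomorphisms in $\BigC$ induce bijections of cocycle sets, $(\vG_\vD,T)$ can lie in the image of $\fI_C$ only if $|T|=1$, which essentially never happens---even when (1) holds. So the biconditional you need is existential, not a membership statement: (1) holds iff \emph{some} $f \in T$ equals $f_c$ for a circular ordering $c$ of $\vG_\vD$ (extension of the $c_i$ then follows from $\delta_i^*f_c = f_{c_i}$ and injectivity of $\fI_C$ on objects, Lemma \ref{useful-inclusion-properties}(4)), and (2) holds iff some section $s_\alpha$ in the lifted colimit induces a genuine left ordering extending the $<_{c_i}$.

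Moreover, matching these two existential conditions is not a formal consequence of colimit preservation; it is where the paper does real work. One needs (a) the concrete isomorphism $\Theta:\vG_{\wt\vD} \to (\wt\vG_\vD)_f$ of Lemma \ref{BigL-colimit}, with the crucial extra property that $\Theta^{-1}\wt\delta_i$ is the inclusion of $\wt G_i$---you correctly flag the identification of underlying groups as ``the main obstacle'' but do not carry it out, and without the normal-form argument there the inclusions need not be identified correctly; (b) Lemma \ref{useful-inclusion-properties}(3), which converts injectivity of the structure morphisms $\sigma_i$ into the statement that they are order-preserving, i.e.\ that the resulting left ordering actually extends each $<_{c_i}$; and (c) for the direction $(2)\Rightarrow(1)$, the verification that the common central element $z$ is cofinal in \emph{any} extending left ordering of $\vG_{\wt\vD}$ (the convex-subgroup argument sketched in Remark \ref{sketch-of-proof}), so that the minimal-representative section exists and its associated cocycle lands in $T$. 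You correctly anticipate the cofinality issue, but the ``honesty'' equivalence on which your last paragraph rests would need to be replaced wholesale by the cocycle-by-cocycle correspondence above, as in the paper's proof of Theorem \ref{main-coro}.
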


One can also approach this theorem with a more narrow view, by setting up an isomorphism between a quotient of $*_{i\in I} \widetilde G_i (\widetilde{H}_i \stackrel{\widetilde \phi_i}{\cong}\widetilde{H})$ and the group $*_{i \in I} G_i  (H_i \stackrel{\phi_i}{\cong} H)$, and then checking that the isomorphism restricts appropriately to certain subgroups identified with lifts of the factors (see Remark \ref{sketch-of-proof}).  While direct, this approach masks the fact that the group isomorphism exists for general categorical reasons, and fails to uncover the additional data yielded by the larger categories that contain colimits of diagrams in $\LOs$ and $\Circ$:   In the course of constructing the colimit corresponding to a collection of circularly-ordered groups $(G_i, c_i)$ with subgroups $H_i$ identified as above, we find that the colimit carries data (in the form of a collection of $2$-cocyles) that encode information about all possible extensions of the circular orderings $\{c_i\}_{i \in I}$ to a circular ordering of $*_{i \in I} G_i  (H_i \stackrel{\phi_i}{\cong} H)$ (Theorem \ref{main-coro}, and Remark \ref{core-remarks}(1)).

Equipped with Theorem \ref{intro-main-theorem}, one still faces the obstacle of verifying that the necessary and sufficient conditions of Bludov and Glass hold for a particular family of left-ordered groups (see the discussion preceding Theorem \ref{bludov-glass}), which is quite difficult in general.  However there are a few natural special cases where the conditions are somewhat easier to verify, leading to simplified versions of our main result.  For example, it is easy to amalgamate circularly-ordered groups along convex subgroups (as is also the case with left-ordered groups \cite[Corollary 5.2]{BV09}):

\begin{restatable}{proposition}{linamalg} \label{linear-amalgamation}
Suppose $(G_i,c_i)$ are circularly-ordered groups for $i \in I$, each equipped with a convex subgroup $H_i \subset G_i$ and an order-preserving isomorphism $\phi_i:(H,d) \to (H_i,c_i)$ from a circularly-ordered group $(H,d)$.  Then the group $*_{i \in I} G_i  (H_i \stackrel{\phi_i}{\cong} H)$ admits a circular ordering $c$ which extends the orderings $c_i$ of $G_i$ for $i \in I$.
\end{restatable}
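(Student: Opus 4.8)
The plan is to deduce the proposition from Theorem~\ref{intro-main-theorem} together with the left-ordered analogue of the statement, namely the theorem of Bludov and Glass that an amalgamated free product of left-ordered groups along a common convex subgroup always admits a left ordering extending the orderings of the factors \cite[Corollary 5.2]{BV09}. By Theorem~\ref{intro-main-theorem}, the amalgam $*_{i \in I} G_i (H_i \stackrel{\phi_i}{\cong} H)$ admits a circular ordering extending the $c_i$ if and only if the amalgam of lifts $*_{i \in I} \widetilde G_i (\widetilde H_i \stackrel{\widetilde\phi_i}{\cong} \widetilde H)$ admits a left ordering extending each $<_{c_i}$. So it suffices to produce such a left ordering, and for this I would invoke \cite[Corollary 5.2]{BV09}. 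Two hypotheses must be verified before that result applies: first, that the lifted identifications $\widetilde\phi_i \colon (\widetilde H, <_d) \to (\widetilde H_i, <_{c_i})$ are order-preserving isomorphisms onto a common subgroup; and second---the crux---that each $\widetilde H_i$ is convex in $(\widetilde G_i, <_{c_i})$.

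The first hypothesis is essentially formal. Because the lifting construction is functorial, the order-preserving isomorphism $\phi_i$ lifts to an isomorphism $\widetilde\phi_i \colon \widetilde H \to \widetilde H_i$, and because restriction to a subgroup commutes with passage to the lift, the left ordering that $\widetilde\phi_i$ transports is exactly the restriction of $<_{c_i}$ to $\widetilde H_i$. Thus all of the $\widetilde H_i$ are identified, as left-ordered groups, with the single left-ordered group $(\widetilde H, <_d)$, which is precisely the compatibility needed to form the amalgam in the category of left-ordered groups.

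The heart of the argument is the second hypothesis: convexity of $H_i$ in $(G_i, c_i)$ must be promoted to convexity of the central extension $\widetilde H_i = \pi_i^{-1}(H_i)$ in $(\widetilde G_i, <_{c_i})$, where $\pi_i \colon \widetilde G_i \to G_i$ is the quotient by the central subgroup $\langle z_i \rangle \cong \Z$. Here I would use the explicit relationship between $<_{c_i}$ and $c_i$: the generator $z_i$ is a positive, central, cofinal element, and $\pi_i$ restricts to a bijection from the interval $[e, z_i)$ onto $G_i$, so that $<_{c_i}$ restricted to this fundamental domain recovers the linear ordering of $G_i$ underlying $c_i$. Given $\tilde a <_{c_i} \tilde g <_{c_i} \tilde b$ with $\tilde a, \tilde b \in \widetilde H_i$, after left-translating by $\tilde a^{-1}$ and using cofinality of $z_i \in \widetilde H_i$ to reduce $\tilde g$ into the fundamental domain by a suitable power of $z_i$, the betweenness of $\pi_i(\tilde g)$ is controlled by the convexity of $H_i$, forcing $\pi_i(\tilde g) \in H_i$ and hence $\tilde g \in \widetilde H_i$.

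I expect this convexity transfer to be the main obstacle, since it requires tracking how the tie-breaking in the definition of $<_{c_i}$ interacts with the central element $z_i$ and with the intrinsic notion of convexity for circular orderings, whereas the remaining reductions are formal consequences of Theorem~\ref{intro-main-theorem} and of functoriality of the lift. Once convexity of each $\widetilde H_i$ is established, \cite[Corollary 5.2]{BV09} supplies the desired left ordering of the amalgam of lifts, and Theorem~\ref{intro-main-theorem} returns a circular ordering $c$ of $*_{i \in I} G_i (H_i \stackrel{\phi_i}{\cong} H)$ extending the $c_i$, completing the proof.
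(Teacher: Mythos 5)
Your overall reduction via Theorem \ref{intro-main-theorem} matches the paper's frame, but the step you correctly single out as the crux --- promoting convexity of $H_i$ in $(G_i,c_i)$ to convexity of $\widetilde H_i = \pi_i^{-1}(H_i)$ in $(\widetilde G_i, <_{c_i})$ --- is not merely delicate, it is false whenever $H_i$ is proper (and the paper's definition of a convex subgroup in a circularly-ordered group requires properness). The subgroup $\widetilde H_i$ contains $z_i = (1,id)$, which is central and cofinal in $(\widetilde G_i, <_{c_i})$, and a convex subgroup containing a cofinal element must be the whole group: for any $\tilde g$ there is $k$ with $z_i^{-k} <_{c_i} \tilde g <_{c_i} z_i^{k}$, and since $z_i^{\pm k} \in \widetilde H_i$, convexity would force $\tilde g \in \widetilde H_i$. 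Concretely, a direct computation with Construction \ref{lift} shows $id <_{c_i} (0,g) <_{c_i} z_i$ for \emph{every} nontrivial $g \in G_i$, with both endpoints in $\widetilde H_i$; so your proposed maneuver of reducing $\tilde g$ into the fundamental domain by a power of $z_i$ cannot work --- betweenness relative to $\widetilde H_i$ carries no information at all, precisely because $z_i \in \widetilde H_i$. Consequently \cite[Corollary 5.2]{BV09} is not applicable in the way you intend: the amalgamated subgroups $\widetilde H_i$ are never convex in the lifts.

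What \emph{is} true, and what the paper proves (Lemma \ref{lem-convex-lift}), is that the restriction of $f_{c_i}$ to the convex subgroup $H_i$ is a coboundary, so $\widetilde H_i \cong \ZZ \times H_i$ splits, and the complementary copy of $H_i$ (not $\widetilde H_i$ itself) is convex in $(\widetilde G_i, <_{c_i})$ --- but that copy is not the amalgamated subgroup, so the convex-amalgamation corollary still does not apply directly. Instead the paper verifies the hypotheses of the full Bludov--Glass criterion (Theorem \ref{bludov-glass}): it takes $\mathcal{N}_i$ to be all left orderings of $\widetilde G_i$ whose restriction to $\widetilde H_i \cong \ZZ \times H_i$ is lexicographic with the $\ZZ$ factor cofinal; Lemma \ref{lem-lex-lifts-normal} shows $\mathcal{N}_i$ is normal (conjugation preserves cofinality), $<_{c_i} \in \mathcal{N}_i$ by the discussion preceding Lemma \ref{lem-convex-lift}, and compatibility of $\{\mathcal{N}_i\}_{i\in I}$ with $\{\widetilde\phi_i\}_{i \in I}$ follows because every lexicographic ordering of $\ZZ \times H_i$ with $\ZZ$ cofinal extends to an ordering of $\widetilde G_i$ --- this extension step is where convexity of the copy of $H_i$ inside $\widetilde G_i$ is actually used. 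Your first hypothesis (compatibility of the lifted orderings $<_{c_i}$ with the $\widetilde\phi_i$) is fine and agrees with the paper; to repair the proof you must replace the false convexity claim with this normal-family construction or some equivalent verification of Theorem \ref{bludov-glass}.
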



This simplified version is of interest to us as it yields immediate applications to fundamental groups of $3$-manifolds, see Example \ref{convex-groups-and-3-manifolds} and the motivation below. 

It is also possible to amalgamate circularly-ordered groups along order-isomorphic rank-one abelian subgroups and their circularly-ordered analogues.  While useful (see, \cite[Lemma 4.12]{BCGpreprint}), this result is fundamentally different than the analogous result for left-orderable groups, which states that amalgamation of left-orderable groups along rank one abelian subgroups always produces a left-orderable group (\cite[Corollary 5.2]{BV09}, cf. Example \ref{CO-cyclic-amalgamation-failure}).  Nevertheless, once circular orderings of $3$-manifold groups are better understood, we expect this special case will also have applications to in the realm of $3$-manifold topology analogous to \cite[Theorem 2.7]{CLW13}.

\begin{restatable}{proposition}{cycamalg}\label{cyclic-amalgamation}
Suppose $(G_i,c_i)$ are circularly-ordered groups for $i \in I$, each equipped with a subgroup $H_i \subset G_i$ and an order-preserving isomorphism $\phi_i:(H,d) \to (H_i,c_i)$ from a circularly-ordered group $(H,d)$. 

 If $H$ is either:
 \begin{enumerate}
  \item a subgroup of the rational points of $S^1$ equipped with the standard ordering, or 
  \item $\mathbb{Q}$ or $\ZZ$ equipped with the ordering $d(q_1, q_2, q_3) =1$ if and only if $q_1<q_2<q_3$ (up to cyclic permutation),
  \end{enumerate}
  then $*_{i \in I} G_i  (H_i \stackrel{\phi_i}{\cong} H)$ admits a circular ordering that extends each of the $c_i$.
\end{restatable}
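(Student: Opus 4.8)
The plan is to prove the statement by transporting it, via Theorem \ref{intro-main-theorem}, to a question about left orderings of an amalgam of the associated central extensions, and then to invoke the amalgamation theorem of Bludov and Glass. By Theorem \ref{intro-main-theorem}, it suffices to produce a left ordering $<$ of $*_{i \in I}\widetilde G_i(\widetilde H_i \stackrel{\widetilde\phi_i}{\cong}\widetilde H)$ that extends each of the left orderings $<_{c_i}$ of $\widetilde G_i$. Since all of the amalgamating maps $\widetilde\phi_i$ are order-preserving lifts of the $\phi_i$, these orderings agree on the common image of $\widetilde H$; the only real content is therefore whether the Bludov--Glass compatibility condition (Theorem \ref{bludov-glass}) holds for this particular family, so that the common ordering on $\widetilde H$ extends to the amalgam. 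Crucially, we need an \emph{extending} ordering rather than mere left orderability, which is precisely the place where the naive circular analogue of rank-one amalgamation breaks down (Example \ref{CO-cyclic-amalgamation-failure}).

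The first step is to identify the lift $\widetilde H$ together with its left ordering in each of the two cases. In case (1), writing the rational points of $S^1$ as $\QQ/\ZZ$, the group $H$ is a subgroup of $\QQ/\ZZ$ and its lift is the preimage of $H$ under $\RR \to \RR/\ZZ$ lying in $\QQ$; thus $\widetilde H$ is a rank-one torsion-free abelian group, namely a subgroup of $\QQ$ containing the central element $z = 1$, equipped with the archimedean ordering inherited from $\RR$. In case (2), the circular ordering $d$ is the one induced by the standard linear ordering of $\ZZ$ or $\QQ$, so its Euler cocycle is a coboundary and the central extension splits: $\widetilde H \cong H \times \langle z \rangle$, left-ordered lexicographically with the cofinal central factor $\langle z\rangle \cong \ZZ$ dominant and the convex subgroup $H$ subordinate. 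This can be read off directly from the parabolic dynamical realization of the linear circular ordering on $S^1$.

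With $\widetilde H$ in hand I would apply the Bludov--Glass machinery. In case (1), $\widetilde H$ is rank-one abelian and archimedean, and the orderings $<_{c_i}$ all restrict to the same archimedean ordering on $\widetilde H$; amalgamation of left-ordered groups along such a subgroup is exactly the situation covered by \cite[Corollary 5.2]{BV09}, which yields the required extending left ordering on $*_{i\in I}\widetilde G_i(\widetilde H_i \stackrel{\widetilde\phi_i}{\cong}\widetilde H)$. In case (2), since $H$ is convex in $\widetilde H$ and $z$ is central and identified across all factors, I would split the verification of the Bludov--Glass condition into the contribution of the convex rank-one subgroup $H$ and that of the central $\langle z\rangle$, reducing case (2) to a combination of the convex amalgamation underlying Proposition \ref{linear-amalgamation} and the rank-one analysis of case (1). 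In both cases, feeding the resulting extending left ordering back through Theorem \ref{intro-main-theorem} produces the desired circular ordering on $*_{i\in I}G_i(H_i\stackrel{\phi_i}{\cong}H)$ extending the $c_i$.

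The main obstacle I anticipate is the verification that the Bludov--Glass condition genuinely yields an \emph{extending} ordering, not merely left orderability of the amalgam; this is the whole difference between these two favourable cases and the general failing situation of Example \ref{CO-cyclic-amalgamation-failure}. In case (1) the archimedean rigidity of rank-one ordered groups should make this automatic, but in case (2) the lift $\widetilde H \cong H \times \langle z\rangle$ is non-archimedean, and the delicate point is to confirm that the interaction between the convex factor $H$ and the cofinal central factor $\langle z\rangle$ does not obstruct extension --- equivalently, that no relation among conjugates of positive elements forces a contradiction across the amalgam. Correctly computing the ordered lift in case (2) and checking this compatibility is where I expect the real work to lie.
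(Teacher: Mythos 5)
Your overall route---lifting via Theorem \ref{main-coro}, verifying the Bludov--Glass hypotheses for the lifted amalgam, and descending---is exactly the paper's, and your case (1) is correct and essentially identical to the paper's proof: $\widetilde H$ is a subgroup of $\QQ$, the lifted orderings are compatible because the $\widetilde\phi_i$ are order-preserving lifts, and amalgamation of left-ordered groups along rank-one abelian subgroups admits an extending ordering by Bludov--Glass (the paper cites \cite[Corollary 5.3]{BV09} for the extending statement; your citation of Corollary 5.2 is a cosmetic slip). Your identification of the lift in case (2) as $\widetilde H_i \cong H_i \times \ZZ$, ordered lexicographically with the $\ZZ$ factor cofinal and $H_i$ convex, also matches the paper.

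Case (2), however, contains a genuine gap, and it sits precisely at the point you flag as ``where the real work lies'' but then do not carry out. Your proposed reduction to ``the convex amalgamation underlying Proposition \ref{linear-amalgamation}'' cannot work as stated: that proposition requires $H_i$ to be convex in $(G_i,c_i)$, which is not a hypothesis here, and the convexity you actually have---$H_i$ convex inside $\widetilde H_i$---gives no convex subgroup of $\widetilde G_i$ meeting $\widetilde H_i$ in $H_i$. What Theorem \ref{bludov-glass} demands is a compatible collection of \emph{normal families} $\mathcal N_i \subset \mathrm{LO}(\widetilde G_i)$, and the substantive step is to show that \emph{all four} lexicographic orderings of the rank-two group $\widetilde H_i$ in which the copy of $H_i$ is convex arise as restrictions of orderings of $\widetilde G_i$ lying in a single normal family; the one ordering $<_{c_i}$ you start with realizes only one of the four. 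The paper isolates exactly this as Proposition \ref{cofinal-rank-two}: normality of the family follows from cofinality of the elements of $\widetilde H_i$ outside the convex rank-one piece (as in Lemma \ref{lem-lex-lifts-normal}), but realizing all four restrictions requires a genuinely new construction---one takes the stabilizer $S_i$ of the downward-closed set $X_i = \{x \mid x < g \text{ for some } g \in K_i\}$ in the ordered $G_i$-set of $<$-downward-closed subsets, observes $S_i \cap \widetilde H_i = K_i$, builds an ordering of $\widetilde G_i$ in which $S_i$ is convex, and then reorders around the convex subgroup $S_i$ in four ways. Without this (or a substitute for it), the compatibility of the families---the very point at which circular amalgamation along cyclic subgroups fails in general, cf.\ Example \ref{CO-cyclic-amalgamation-failure}---remains unverified, so your proof of case (2) is incomplete rather than merely unpolished.
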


%

As implied above, our interest in extending the work of Bludov and Glass to the case of circularly-ordered groups stems from current work in low-dimensional topology.  Left-orderable groups have recently come to prominence in the field of low-dimensional topology via conjectured connections between foliations, Heegaard-Floer homology and left-orderability of the fundamental groups of $3$-manifolds \cite[Conjecture 1]{BGW13}, \cite[Conjecture 2.5]{Juhasz15}.  In this setting, the question of left-orderability of free products with amalgamation arises naturally when considering the fundamental groups of compact, orientable, non-geometric $3$-manifolds, as such fundamental groups are encoded by graphs of groups with edge groups isomorphic to $\ZZ \oplus \ZZ$.  

In some sense, however, circular orderability is the natural way to begin a study of left-orderability in the context of $3$-manifold fundamental groups, as there are often direct connections between circular orderings and the topology of the underlying manifold.  For example, if $M$ is a compact, connected, orientable $3$-manifold,  the existence of a circular ordering of $\pi_1(M)$ is tied directly to whether or not $M$ supports a co-orientable taut foliation, via Thurston's universal circle construction \cite{CD03}.  One can also create circular orderings of $\pi_1(M)$ when $M$ is a cusped hyperbolic $3$-manifold admitting a certain type of nice triangulation via a study of the action of $\pi_1(M)$ on the cusps of the universal cover of $M$ (\cite{SSpreprint}, cf.  Example \ref{convex-groups-and-3-manifolds}).  In contrast, aside from cases where $|H_1(M)| = \infty$ (in which case, $\pi_1(M)$ is known to be left-orderable by \cite{BRW05}), most left orderings of $3$-manifold fundamental groups arise by first constructing a circular ordering via a representation of $\pi_1(M)$ into a group of homeomorphisms of $S^1$, and then using one of a variety of ad-hoc techniques to show that the Euler class of the representation is trivial.

This work is therefore inspired by the following problem, raised in \cite[Section 4]{BS15}:\footnote{The authors of \cite{BS15} pose the question in terms of a decomposition of $\pi_1(M)$ arising from a Heegaard splitting.} 

\begin{problem}
\label{3manifold-problem}
 Suppose that $M$ is a $3$-manifold with geometric pieces $M_1, \ldots, M_n$, and that $\pi_1(M_i)$ admits a circular ordering $c_i$.  Determine necessary and sufficient conditions in terms of the gluing maps which recover $M$ from the pieces $M_i$, and the circular orderings $c_i$,  which guarantee the existence of an ordering $c$ of $\pi_1(M)$ extending each of the $c_i$ (cf. \cite[Theorem 1.7(2)]{BC17}).
\end{problem}

The paper is organized as follows.  In Section \ref{background} we review definitions and place them in a categorical framework.  In Section \ref{amalgamation} we expand the categories introduced in Section \ref{background} to include certain colimits and establish our main theorem.  We restate the theorem in the language of circular orderings in Section \ref{compatible-section}, obtaining a circularly-orderable analogue to the theorem of Bludov and Glass for amalgamations of left-orderable groups.  Section \ref{special-cases} covers two relevant special cases where amalgamation always yields a circularly-ordered group.  Last, Section \ref{tensor-cat} shows that the categories introduced in Section \ref{background} are in fact tensor categories when paired with an operation introduced in  \cite{BS15}.

\subsection*{Acknowledgements}
We would like to thank the referee for several suggestions that have improved the paper. Adam Clay was partially supported by NSERC grant RGPIN-2014-05465. Tyrone Ghaswala was partially supported by a PIMS Postdoctoral Fellowship at the University of Manitoba.

\section{Background and categorical framework}
\label{background}

We begin with the definition of a left ordering of a group, and what is commonly called the \textit{cocycle definition} of a circular ordering of a group.

\begin{definition}
A \emph{left ordering} of a group $G$ is a strict total ordering $<$ such that $g < h$ implies $fg < fh$ for all $f,g,h \in G$. When $G$ admits a left ordering $<$, we call $G$ \emph{left-ordered} and write $(G,<)$. Given a left ordering, we can define the \emph{positive cone} $P =\{ g \in G: g > id\}$.

Given left-ordered groups $(G,<)$ and $(H,\prec)$, an \emph{order-preserving homomorphism} is a homomorphism $\phi:G \to H$ such that $g_1 < g_2$ if and only if $\phi(g_1) \prec \phi(g_2)$ for all $g_1,g_2 \in G$.
\end{definition}

It is easily checked that a positive cone $P$ of a left ordering satisfies $P \cdot P \subset P$ and $P \sqcup P^{-1} = G \sm\{id\}$. On the other hand, given a subset $P$ of a group $G$ satisfying $P \cdot P \subset P$ and $P \sqcup P^{-1} = G \sm\{id\}$, we can define a left ordering $<$ with positive cone $P$ by $g < h$ whenever $g^{-1}h \in P$. Therefore to define a left ordering of a group it suffices to specify its positive cone.

\begin{definition} \label{cocycle-def}
Given a $G$-set $S$, an \emph{invariant circular ordering} on $S$ is a function $c:G^3 \to \{\pm 1,0\}$ such that
\begin{enumerate}
\item $c^{-1}(0) = \Delta(S)$, where $\Delta(S) := \{(a_1,a_2,a_3) \in S^3 \mid a_i = a_j, \text{ for some } i \neq j\}$,
\item the function $c$ satisfies the cocycle condition
\[
c(a_2,a_3,a_4)-c(a_1,a_3,a_4)+c(a_1,a_2,a_4)-c(a_1,a_2,a_3)=0
\]
for all $a_1,a_2,a_3,a_4 \in S$, and
\item $c(a_1,a_2,a_3) = c(g\cdot a_1,g\cdot a_2,g\cdot a_3)$ for all $g \in G$ and $a_1,a_2,a_3 \in S$.
\end{enumerate}
When $G$ admits an invariant circular ordering $c$ under the action of left multiplication on itself, we call $G$ \emph{circularly-ordered} and write $(G,c)$.

Given circularly-ordered groups $(G,c)$ and $(H,d)$ an \emph{order-preserving homomorphism} is a homomorphism $\phi:G \to H$ such that $c(g_1,g_2,g_3) = d(\phi(g_1),\phi(g_2),\phi(g_3))$ for all $g_1,g_2,g_3 \in G$.
\end{definition}

Note that order-preserving homomorphisms of both left-ordered groups and circularly ordered groups are necessarily injective. 

Let $\phi:G \to H$ be an injective homomorphism. Suppose $<$ is a left order on $H$ with positive cone $P$.  Define the {\it pullback of $<$ by $\phi$} as the left order $<^\phi$ on $G$ given by $g_1 <^\phi g_2$ if $\phi(g_1) < \phi(g_2)$.   The positive cone of $<^\phi$ is given by $\phi^*P := \{g \in G: \phi(g) \in P\}$.  Similarly, suppose $c$ is a circular ordering on $H$. Define the {\it pullback of $c$ by $\phi$} as the circular ordering $\phi^*c$ on $G$ given by $\phi^*c(g_1,g_2,g_3) := c(\phi(g_1),\phi(g_2),\phi(g_3))$. With this notation, an injective homomorphism $\phi:(G,c) \to (H,d)$ of circularly-ordered groups (resp. $\varphi:(G,<) \to (H,\prec)$ of left-ordered groups) is order-preserving exactly when $\phi^*d = c$ (resp. $\prec^\varphi = <$).

Important in the study of circularly-ordered groups is the relation between a group $(G, c)$ and its left-ordered lift, $(\widetilde{G}_c, <_c, z_c)$.  Here, and in what follows, the notation $(G, <, z)$ will be used to denote a left-ordered group $G$ with ordering $<$ and a chosen positive, cofinal, central element $z \in G$.  Recall an element $z \in G$ is \emph{cofinal} with respect to a left ordering $<$ of $G$ (or $<$-cofinal for short) if $$G= \{ g \in G \mid \exists k \in \ZZ \mbox{ such that } z^{-k} < g< z^k \}.$$  

\begin{construction}[\cite{zheleva76}]
\label{lift} 
Given a circularly-ordered group $(G,c)$, construct $(\widetilde{G}_c, <_c, z_c)$ as follows.  Let $\widetilde{G}_c$ denote the central extension of $G$ by $\ZZ$ constructed by equipping the set $\ZZ \times G$ with the operation $(n,a)(m,b)=(n+m+f_c(a,b),ab)$, where
\[
f_c(a,b)=\left\{\begin{array}{cl} 0 & \text{if $a=id$ or $b=id$ or $c( id, a, ab)=1$}
\\ 1 &  \text{if $ab=id$ $(a\not=id)$ or $c(id,ab,a)=1$.  }  \end{array}
\right.
\] 
Define the positive cone of a left ordering $<_c$ by $$P=\{(n,a)\mid n\geq 0\} \setminus \{ (0, id) \}.$$  The central element $z_c = (1, id)$ is positive and cofinal with respect to $<_c$.  When no confusion will arise from doing so, we will denote $\widetilde{G}_c$ by $\wt G$.
\end{construction}

It can be checked that if $\phi:H \to G$ is an injective homomorphism and $c$ is a circular ordering of $G$, then $f_{\phi^*c} = \phi^*f_c$ where $\phi^*f_c(h_1,h_2) := f_c(\phi(h_1),\phi(h_2))$ for all $h_1,h_2 \in H$.

\begin{remark}\label{derived-cocycle}
The functions $c:G^3 \to \ZZ$ and $f_c:G^2 \to \ZZ$ are both 2-cocycles, where $c$ is expressed in homogeneous coordinates, and $f$ is expressed in inhomogeneous coordinates.  In fact, $[c] = 2[f_c]$ in $H^2(G;\ZZ)$ and Construction \ref{lift} is the well-known construction that gives rise to a bijection between elements of $H^2(G;\ZZ)$ and equivalence classes of central extensions of $G$ \cite[Chapter IV.3]{Brown}.  Indeed, consider the set-theoretic section $s:G \to \wt G_c$ of the central extension
\[
1 \lra \langle z_c \rangle \overset{\iota}{\lra} \wt G_c \lra G \to 1
\]
by defining $s(g) \in \wt G_c$ to be the unique element such that $id \leq_c s(g) <_c z_c$.  Then
\[
\iota \left(z_c^{f_c(a,b)}\right) = s(a)s(b)s(ab)^{-1}
\]
(see Lemma \ref{key-equivalence}), that is, $f_c(a,b)$ measures the failure of $s$ to be a homomorphism.  In other words, $[f_c] \in H^2(G;\ZZ)$ is the Euler class of the identity homomorphism $G \to G$.  Note that it is possible for two different circular orderings $c,d$ on $G$ to be such that $[f_c] = [f_d] \in H^2(G;\ZZ)$.  While this implies that the central extensions $\wt G_c$ and $\wt G_d$ are isomorphic, it may be that $\wt G_c$ and $\wt G_d$ are not isomorphic as left-ordered groups.  
\end{remark}

When $(G, <, z)$ is a left-ordered group with a positive cofinal central element $z$, we can take a quotient of $G$ by $\langle z \rangle$ and arrive at a circularly-ordered group.  
\begin{construction}[\cite{zheleva76}]\label{quotient}
Given $(G, <, z)$, let $\ol G = G/\langle z \rangle$.  Define a circular ordering $c_<$ on $\ol{G}$ as follows: for every $g\langle z \rangle \in G/\langle z \rangle$, define the {\em minimal representative} of $g\langle z \rangle$ to be the unique $\ol{g} \in g\langle z \rangle$ satisfying $id \leq \ol{g} <z$. Then set 
$$c_<(g_1\langle z \rangle, g_2\langle z \rangle,g_3\langle z \rangle)=sign(\sigma),$$
where $\sigma$ is the unique permutation in $S_3$ such that $\ol{g_{\sigma(1)}}<\ol{g_{\sigma(2)}}<\ol{g_{\sigma(3)}}$. 
\end{construction}

These two constructions are not inverses to one another, but provide an equivalence of categories in a sense that we now make precise.

\begin{definition}
\label{the-little-categories}
Define a category $\mathbf{Circ}$ whose objects are circularly-ordered groups $(G,c)$, and whose morphisms $\phi:(G,c) \to (H,d)$ are order-preserving homomorphisms.

Define a category $\LOs$ whose objects are left-ordered groups $(G,<,z)$ equipped with a central, positive, cofinal element.  Morphisms $\phi:(G,<,z) \to (H,\prec,w)$ are order-preserving homomorphisms $\phi:G \to H$ such that $\phi(z) = w$.
\end{definition}

It is tempting to define categories whose morphisms include non-injective homomorphisms, say by replacing the condition $c(g_1, g_2, g_3) = d(\phi(g_1), \phi(g_2), \phi(g_3))$ on morphisms in $\Circ$ with $$|c(g_1, g_2, g_3) - d(\phi(g_1), \phi(g_2), \phi(g_3))| \leq 1$$ and similarly modifying the definition of morphisms in $\LOs$ (this would allow quotient maps where the kernel is a convex subgroup, c.f. Lemma \ref{convex}).  However, with this modification $\Circ$ and $\LOs$ are no longer equivalent categories, and the construction in Section \ref{tensor-cat} does not yield a bifunctor on $\Circ$.  See Remark \ref{importance-of-injectivity} and Proposition \ref{functor-theorem} for more details on this point.

We now build functors $L:\Circ \to \LOs$ and $Q:\LOs \to \Circ$ in the following way: on objects, define $L$ and $Q$ by Constructions \ref{lift} and \ref{quotient} respectively.  If $\phi:(G,c) \to (H,d)$ is a morphism in $\Circ$, then define $L(\phi) = \wt \phi:\wt G \to \wt H$ by $\wt\phi((n,a)) = (n,\phi(a))$.  For a morphism $\theta:(G,<,z) \to (H,\prec,w)$ in $\LOs$, define $Q(\theta) = \ol\theta:\ol G \to \ol H$ by $\ol\theta(g\langle z\rangle) = \theta(g)\langle w \rangle$.  The proof of the following lemma is a straightforward calculation, so we omit it. 

%

\begin{lemma}\label{functors}
The mappings $L:\Circ \to \LOs$ and $Q:\LOs \to \Circ$ are well-defined functors.
\end{lemma}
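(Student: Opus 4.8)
The plan is to verify, for each of $L$ and $Q$, the three conditions that make a functor well-defined: that it carries objects of the source to objects of the target, that it carries each morphism to a genuine morphism of the target (so in particular, that the assigned map is a well-defined order-preserving homomorphism, carrying the chosen central element correctly in the case of $L$), and that it respects identities and composition. The behaviour on objects is precisely the content of Constructions \ref{lift} and \ref{quotient}, which I may assume, so all the work lies in the morphism assignments; the identity and composition axioms will then drop out immediately from the explicit coordinate formulas $\wt\phi((n,a)) = (n,\phi(a))$ and $\ol\theta(g\langle z\rangle) = \theta(g)\langle w\rangle$.

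For $L$, given an order-preserving $\phi:(G,c) \to (H,d)$, I first check that $\wt\phi$ is a homomorphism. Expanding $\wt\phi((n,a)(m,b))$ and comparing it with $\wt\phi((n,a))\wt\phi((m,b))$ reduces the claim to the single identity $f_c(a,b) = f_d(\phi(a),\phi(b))$. Since $\phi$ order-preserving means $\phi^*d = c$, this is exactly the relation $f_c = f_{\phi^*d} = \phi^*f_d$ recorded just after Construction \ref{lift}. Next, $\wt\phi$ is order-preserving: writing the positive cone of $<_c$ as $\{(n,a) \mid n \geq 1\} \cup \{(0,a) \mid a \neq \id\}$, an element lies in it if and only if its image $(n,\phi(a))$ lies in the positive cone of $<_d$, the only subtle case being $n=0$, which is settled by the injectivity of $\phi$ (noted to hold for all morphisms), so $a \neq \id$ if and only if $\phi(a) \neq \id$. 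Finally $\wt\phi(z_c) = \wt\phi((1,\id)) = (1,\id) = z_d$, so $\wt\phi$ is indeed a morphism in $\LOs$.

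For $Q$, given an order-preserving $\theta:(G,<,z) \to (H,\prec,w)$ with $\theta(z) = w$, I first check that $\ol\theta$ is well-defined on cosets: because $\theta(z) = w$, we have $\theta(gz^k) = \theta(g)w^k$, so the coset $\theta(g)\langle w\rangle$ does not depend on the representative, and it is then immediate that $\ol\theta$ is a homomorphism. The substantive point is that $\ol\theta$ preserves the circular ordering $c_<$. Here I would observe that $\theta$ sends minimal representatives to minimal representatives: if $\ol g$ is the minimal representative of $g\langle z\rangle$, so $\id \leq \ol g < z$, then applying the order-preserving $\theta$ (with $\theta(\id) = \id$ and $\theta(z) = w$) gives $\id \preceq \theta(\ol g) \prec w$, so $\theta(\ol g)$ is the minimal representative of $\theta(g)\langle w\rangle$. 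Since $\theta$ is order-preserving, the relative order of the three minimal representatives $\ol{g_1}, \ol{g_2}, \ol{g_3}$ matches that of their images, so the sorting permutation $\sigma$ is unchanged and hence $c_<$ is carried to $c_\prec$ under $\ol\theta$.

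The two steps deserving genuine care are precisely these order-compatibility checks: the homomorphism identity for $\wt\phi$, which rests squarely on the cocycle compatibility $f_c = \phi^*f_d$, and the circular-order preservation for $\ol\theta$, which rests on the observation that an order-preserving map fixing the cofinal central element transports minimal representatives to minimal representatives. Once these are established, functoriality (preservation of identities and composites) is a one-line verification from the coordinate formulas, completing the proof.
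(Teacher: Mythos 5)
Your proof is correct, and since the paper itself omits the argument (``The proof of the following lemma is a straightforward calculation, so we omit it''), your write-up is precisely the intended verification: you correctly isolate the only two nontrivial points, namely that $\wt\phi$ is a homomorphism because $f_c = f_{\phi^*d} = \phi^*f_d$ (the identity recorded after Construction \ref{lift}), and that $\ol\theta$ preserves $c_<$ because an order-preserving morphism in $\LOs$ carries minimal representatives to minimal representatives. The remaining checks (positive cones, $\wt\phi(z_c)=z_d$, well-definedness of $\ol\theta$ on cosets, identities and composites) are handled exactly as the authors would expect, so there is nothing to add.
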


To prove that $L$ and $Q$ give an equivalence of categories, we must first prove the following key technical lemma.  Given $(G,<,z)$, define the function $f_<:\ol G^2 \to \ZZ$ by 
\[
z^{f_<(a\langle z \rangle,b\langle z \rangle)} = (\ol a)(\ol b)(\ol{ab})^{-1}.
\]
Given $(G,c)$, let $(\wt G,<_c,z_c)$ be the object obtained by applying the functor $L$.  Define an isomorphism $\eta_G:\ol{\wt G} \to G$ by $(n,a)\langle z_c \rangle \mapsto a$.  In the next proof, note that for an extension built from a 2-cocycle $f$ as in Construction \ref{lift}, $(n,a)^{-1} = (-n-f(a,a^{-1}),a^{-1}) = (-n-1,a^{-1})$.  

\begin{lemma}\label{key-equivalence}
With the notation above, $f_< = f_{c_<}$ and $\eta_G^*c = c_{<_c}$.
\end{lemma}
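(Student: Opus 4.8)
The plan is to establish both equalities by direct computation from Constructions \ref{lift} and \ref{quotient}, exploiting left-invariance together with the centrality and cofinality of the distinguished element. Throughout, given $(G,<,z)$ I write $\ol g$ for the minimal representative of $g\langle z\rangle$, so that $id \le \ol g < z$.

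For the identity $f_< = f_{c_<}$, I would fix cosets $A = a\langle z\rangle$ and $B = b\langle z\rangle$. Since $\ol a\,\ol b$ and $\ol{ab}$ represent the same coset, $\ol a\,\ol b = z^{f_<(A,B)}\ol{ab}$, so the first task is to pin down $f_<(A,B)$. When $A,B \neq id$, left-invariance together with the centrality of $z$ gives $id < \ol a < \ol a\,\ol b < z\ol a < z^2$; hence $\ol a\,\ol b$ lies in $(id, z^2)$ and $f_<(A,B) \in \{0,1\}$, with $f_<(A,B) = 0$ exactly when $\ol a\,\ol b = \ol{ab} < z$. The second task is to read off $c_<$: because $id < \ol a$, one has $f_<(A,B) = 0 \iff \ol a < \ol{ab}$ and $f_<(A,B) = 1 \iff \ol{ab} < \ol a$ (the latter because $z^{-1}\ol a\,\ol b < \ol a$ is equivalent to $\ol b < z$). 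By the definition of $c_<$, the relation $\ol a < \ol{ab}$ says $c_<(id, A, AB) = 1$ and $\ol{ab} < \ol a$ says $c_<(id, AB, A) = 1$; matching these against the two cases defining $f_{c_<}$ then yields $f_< = f_{c_<}$. The degenerate cases $A = id$, $B = id$, and $AB = id$ are checked separately and agree.

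For the identity $\eta_G^* c = c_{<_c}$, I would first determine the minimal representatives of $\wt G_c$ under $<_c$. Using $(0,a)^{-1} = (-1, a^{-1})$ for $a \neq id$, a short computation shows the unique element of $(n,a)\langle z_c\rangle$ lying in $[id, z_c)$ is $(0,a)$; that is, the canonical section is $s(g) = (0,g)$. Consequently, writing $X_i = (n_i, a_i)\langle z_c\rangle$, we have $\eta_G^* c(X_1, X_2, X_3) = c(a_1, a_2, a_3)$, whereas $c_{<_c}(X_1, X_2, X_3)$ is $sign(\sigma)$ for the permutation $\sigma$ sorting $(0,a_1), (0,a_2), (0,a_3)$ under $<_c$. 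The crux is to describe $<_c$ on the section: computing the first coordinate of $(0,a)^{-1}(0,b)$ shows that for distinct $a, b \neq id$ one has $(0,a) <_c (0,b) \iff f_c(a^{-1}, b) = 1$, while $id$ is $<_c$-least. Unwinding the definition of $f_c$ and using left-invariance and the cyclic symmetry of $c$ (both consequences of Definition \ref{cocycle-def}), the condition $f_c(a^{-1},b) = 1$ is equivalent to $c(id, a^{-1}b, a^{-1}) = 1$, hence to $c(a,b,id)=1$, hence to $c(id, a, b) = 1$. Thus $<_c$ restricted to the section is precisely the linear order obtained by cutting $c$ at the identity.

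It then remains to re-circularize: if $a_{\sigma(1)}, a_{\sigma(2)}, a_{\sigma(3)}$ is the $<_c$-sorted triple, applying the cocycle condition to the four points $id, a_{\sigma(1)}, a_{\sigma(2)}, a_{\sigma(3)}$ gives $c(a_{\sigma(1)}, a_{\sigma(2)}, a_{\sigma(3)}) = 1$, and the antisymmetry of $c$ (again from Definition \ref{cocycle-def}) then forces $sign(\sigma) = c(a_1, a_2, a_3)$, which is the desired equality; triples with a repeated coset or containing the identity are handled directly. The main obstacle is precisely this middle step of the second identity: the left order $<_c$ is packaged through the inhomogeneous cocycle $f_c$, and converting it back into the homogeneous circular order $c$ requires computing inverses and products in $\wt G_c$ correctly and then invoking the defining case split of $f_c$ together with left-invariance and cyclic symmetry at exactly the right moment. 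Once one recognizes that $<_c$ on the canonical section is nothing but the cut of $c$ at the identity, the remaining steps—the case analysis locating $\ol a\,\ol b$ in the first identity and the re-circularization in the second—reduce to routine cocycle bookkeeping.
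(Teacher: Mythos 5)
Your proposal is correct and follows essentially the same route as the paper's proof: the same case split on whether $\ol a\,\ol b < z$ or $z \leq \ol a\,\ol b$ (using centrality of $z$) for $f_< = f_{c_<}$, and for $\eta_G^*c = c_{<_c}$ the same identification of the minimal representatives $(0,a)$, the criterion $(0,a) <_c (0,b) \iff f_c(a^{-1},b)=1$, the translation to $c(id,a,b)=1$ via left-invariance and cyclic symmetry, and the final application of the cocycle condition to recover $c$ on the sorted triple. Your explicit handling of triples containing the identity coset is, if anything, slightly more careful than the paper's.
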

\begin{proof}
We'll first show $f_< = f_{c_<}$.  If $\ol a = id$ or $\ol b = id$, then we immediately have $$f_<(a\langle z \rangle,b\langle z \rangle) = f_{c_<}(a\langle z \rangle,b\langle z \rangle) = 0.$$ Since $z$ is central in $G$, we have $id \leq \ol a \ol b < z^2$.  If $\ol a \ol b < z$, then $\ol{ab} = \ol a \ol b$ so $f_<(a \langle z \rangle, b\langle z \rangle) = 0$.  Since $id < \ol b$, $id < \ol a < \ol a \ol b = \ol{ab}$ so $c_<(id,a\langle z \rangle, ab\langle z \rangle) = 1$ and $f_{c_<}(a \langle z \rangle, b\langle z \rangle) = 0$.  On the other hand, assume $z \leq \ol a \ol b$. Then $\ol{ab}z= \ol a \ol b$ so $\ol a \ol b (\ol{ab})^{-1} = z$ and $f_<(a \langle z \rangle, b\langle z \rangle) = 1$.  Since $\ol b < z$, $\ol{ab} = z^{-1}\ol a\ol b < \ol a$.  Therefore $id\leq \ol{ab} < \ol a$.  If $id = \ol{ab}$, then $a\langle z \rangle b\langle z \rangle = id$ so we have $f_{c_<}(a \langle z \rangle, b \langle z \rangle) = 1$.  If $id < \ol{ab}$, then $c_<(id, ab\langle z \rangle, a\langle z \rangle) = 1$ implying $f_{c_<}(a \langle z \rangle, b\langle z \rangle) = 1$ and we may conclude $f_< = f_{c_<}$.

We now show $\eta_G^*c = c_{<_c}$.  Minimal representatives in $\wt G$ take the form $\ol{(n,a)} = (0,a)$.  Notice that $(0,a) < (0,b)$ if and only if $(0,a)^{-1}(0,b) = (f_c(a^{-1},b)-1,a^{-1}b)$ is in the positive cone of $<_c$, which occurs precisely when $f_c(a^{-1},b) = 1$.  Consider an arbitrary triple $((n_1,a_1)\langle z_c\rangle, (n_2,a_2) \langle z_c\rangle, (n_3,a_3) \langle z_c \rangle) \in \ol G \sm \Delta(\ol G)$.  Let $\sigma \in S_3$ be the unique permutation such that $(0,a_{\sigma(1)}) <_c (0,a_{\sigma(2)}) <_c (0,a_{\sigma(3)})$, which is equivalent to
\[
f_c(a_{\sigma(1)}^{-1},a_{\sigma(2)}) = f_c(a_{\sigma(2)}^{-1},a_{\sigma(3)}) = 1.
\]
Since $(a_{\sigma(1)},a_{\sigma(2)},a_{\sigma(3)})\notin \Delta(G)$, this is equivalent to the condition
\[
c(id,a_{\sigma(1)}^{-1}a_{\sigma(2)},a_{\sigma(1)}^{-1}) = c(id,a_{\sigma(2)}^{-1}a_{\sigma(3)},a_{\sigma(2)}^{-1}) = 1.
\]
Since $c$ is invariant under left multiplication we have $c(a_{\sigma(1)},a_{\sigma(2)},id) = c(a_{\sigma(2)},a_{\sigma(3)},id) = 1$.  Applying the cocycle condition gives
$c(a_{\sigma(1)},a_{\sigma(2)},a_{\sigma(3)}) = 1$.  Therefore we have
\[
c_{<_c}((n_1,a_1)\langle z_c\rangle, (n_2,a_2) \langle z_c\rangle, (n_3,a_3) \langle z_c \rangle) = sign(\sigma) = c(a_1,a_2,a_3),
\]
completing the proof since $\eta_G((n_i,a_i)\langle z_c\rangle) = a_i$.
\end{proof}

\begin{proposition}\label{categorical-equivalence}
The functors $Q$ and $L$ provide an equivalence of categories $\mathbf{LO}_* \cong \mathbf{Circ}$.
\end{proposition}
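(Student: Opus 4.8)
The plan is to establish the equivalence by exhibiting natural isomorphisms $Q \circ L \cong \id_{\Circ}$ and $L \circ Q \cong \id_{\LOs}$, with the two halves of Lemma \ref{key-equivalence} supplying the crucial input for each component. The philosophy is that Lemma \ref{key-equivalence} has already done the hard computational work; what remains is to package it correctly and check naturality.

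For the first composite, the identity $\eta_G^*c = c_{<_c}$ from Lemma \ref{key-equivalence} says precisely that $\eta_G:\ol{\wt G} \to G$ is an order-preserving isomorphism $Q(L(G,c)) \to (G,c)$ in $\Circ$, since $Q(L(G,c)) = (\ol{\wt G}, c_{<_c})$. I would then check that the $\eta_G$ assemble into a natural transformation $\eta:Q\circ L \Rightarrow \id_{\Circ}$: given a morphism $\phi:(G,c)\to(H,d)$, unwinding the functors gives $Q(L(\phi))((n,a)\langle z_c\rangle) = (n,\phi(a))\langle z_d\rangle$, and both routes around the naturality square send $(n,a)\langle z_c\rangle$ to $\phi(a)$. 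As each $\eta_G$ is an isomorphism, $\eta$ is a natural isomorphism.

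For the other composite, noting $L(Q(G,<,z)) = (\wt{\ol G}, <_{c_<}, z_{c_<})$, I would construct $\mu_G:\wt{\ol G} \to G$ by sending $(n,g\langle z\rangle)$ to $z^n\ol g$, where $\ol g$ is the minimal representative of $g\langle z\rangle$. The technical heart of the argument is that this is a homomorphism: verifying it reduces exactly to the relation $z^{f_{c_<}(a\langle z\rangle,b\langle z\rangle)} = \ol a\,\ol b\,\ol{ab}^{-1}$, which follows by combining the first half of Lemma \ref{key-equivalence}, namely $f_< = f_{c_<}$, with the defining relation for $f_<$. Bijectivity comes from cofinality of $z$, which lets one write every $g \in G$ uniquely as $z^n\ol g$, and one computes $\mu_G(z_{c_<}) = \mu_G((1,id\langle z\rangle)) = z$, so $\mu_G$ is a morphism in $\LOs$ once order-preservation is confirmed.

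It remains to see $\mu_G$ is order-preserving and that $\mu$ is natural. Since $\mu_G$ is a bijective homomorphism, it suffices to show it carries the positive cone $\{(n,a\langle z\rangle):n\geq 0\}\sm\{(0,id)\}$ of $<_{c_<}$ into the positive cone of $<$, the complementary case following by passing to inverses; using $id \leq \ol a < z$ and left-invariance one checks $z^n\ol a > id$ when $n\geq 1$, and when $n=0$ with $\ol a \neq id$. For naturality of $\mu$, given $\theta:(G,<,z)\to(H,\prec,w)$ the needed identity is $\theta(\ol g) = \ol{\theta(g)}$, which holds because $\theta$ is order-preserving with $\theta(z)=w$, so it sends the minimal representative of $g\langle z\rangle$ to that of $\theta(g)\langle w\rangle$. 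With $\eta$ and $\mu$ both natural isomorphisms, the equivalence $\LOs \cong \Circ$ follows. I expect the only genuine obstacle to be verifying that $\mu_G$ is well defined as a homomorphism; this is exactly the step that consumes the identity $f_< = f_{c_<}$, while the $\eta$ side, order-preservation, and both naturality checks are routine diagram chases.
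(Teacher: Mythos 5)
Your proposal is correct and matches the paper's proof essentially step for step: your $\mu_G$ is exactly the paper's $\nu_G:(n,a\langle z\rangle)\mapsto z^n\ol a$, your $\eta$ side is the paper's use of $\eta_G$ from Lemma \ref{key-equivalence}, and both naturality checks (via $\theta(\ol g)=\ol{\theta(g)}$ and via $\phi$ on cosets) are the same computations the paper performs. The only differences are cosmetic, such as the order in which the two composites are treated.
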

\begin{proof}
First note that isomorphisms in $\Circ$ and $\LOs$ are simply morphisms that are also group isomorphisms, since both categories admit a faithful functor to the category of groups.  We will start by showing $LQ \simeq \mathbf 1_{\LOs}$.  Let $(G,<,z)$ be an object in $\LOs$ and note that every element of $G$ can be written uniquely as $z^n \ol a$, where $n \in \ZZ$ and $\ol a \in a \langle z \rangle$ is the minimal representative.  Viewed this way, the group structure is given by $(z^n \ol a)(z^m \ol b) = z^{n+m+f_<(a\langle z \rangle, b\langle z \rangle)}\ol{ab}$ where $f_<$ is the cocycle from Lemma \ref{key-equivalence}.  Furthermore, $id < z^n\ol a$ if and only if $z^{-n}< \ol a$, which occurs precisely when $n \geq 0$.  Therefore the positive cone of $<$ is given by
\[
P_< = \{z^n \ol a \in G \mid n \geq 0\}\sm\{id\}.
\]
Now construct a map $\nu_G:LQ(G) \to G$ by $(n,a\langle z \rangle) \mapsto z^n\ol a$.  This map is a bijection, and since $f_< = f_{c_<}$ (by Lemma \ref{key-equivalence}) it is a group isomorphism.  Since $\nu_G((1,id)) = z$ and since $(n,a\langle z \rangle)$ in the positive cone of $<_{c_<}$ is mapped to $\nu_G(n,a\langle z \rangle) = z^n\ol a \in P_<$, we conclude $\nu_G$ is an isomorphism in $\LOs$.  It remains to check the $\nu_G$ give natural isomorphisms between $LQ$ and $\mathbf 1_{\LOs}$.  Let $\theta:(G,<,z) \to (H,\prec,w)$ be a morphism in $\LOs$.  Then
\[
\nu_H\wt{\ol{\theta}}((n,a\langle z \rangle)) = \nu_H((n,\theta(a)\langle w \rangle)) = w^n\ol{\theta(a)} = w^n\theta(\ol a) = \theta(z^n\ol a) = \theta\nu_G((n,a\langle z \rangle))
\]
so $LQ \simeq \mathbf 1_{\LOs}$.

To see $QL \simeq \mathbf 1_{\Circ}$, recall the maps $\eta_G:QL(G) \to G$ from Lemma \ref{key-equivalence}.  These are easily checked to be group isomorphisms, and since $\eta_G^*c = c_{<_c}$ by Lemma \ref{key-equivalence}, the $\eta_G$ are isomorphisms in $\Circ$.  Let $\phi:(G,c) \to (H,d)$ be a morphism in $\Circ$.  Then
\[
\eta_H\ol{\wt\phi}((n,a)\langle z_c\rangle) = ((n,\phi(a))\langle z_d\rangle) = \phi(a) = \phi\eta_G((n,a)\langle z_c \rangle)
\]
so the $\eta_H$ give a natural isomorphism $QL \simeq \mathbf 1_{\Circ}$.  We conclude $\LOs \cong \Circ$.
\end{proof}

\begin{remark}
\label{sketch-of-proof}
With these notions established, it is possible to give a rough sketch of the ideas that follow.  Suppose that $(G_i, c_i)$ are circularly-ordered groups for $i \in I$ with subgroups $H_i$ and order-preserving isomorphisms $\phi_i:(H, d) \rightarrow (H, c_i)$ from some circularly-ordered group $(H,d)$.  The categorical equivalence outlined above yields, for each $i$, an isomorphism $\psi_i : \ol{\wt G}_i \rightarrow G_i$.  These isomorphisms piece together to yield a map $\psi : *_{i \in I} \wt G_i(\wt H_i \stackrel{\wt \phi_i}{\cong}\wt{H})/ \langle z \rangle  \rightarrow *_{i\in I}G_i(H_i \stackrel{\phi_i}{\cong} H)$ when we identify each $\ol{\wt{G}}_i$ with the subgroup $\wt G_i /\langle z \rangle$ in the quotient $*_{i \in I} \wt G_i(\wt H_i \stackrel{\wt \phi_i}{\cong}\wt{H})/ \langle z \rangle$ (here, $z$ is the cofinal central element in the free product with amalgamation that results from identifying all of the cofinal central elements of the factors).  

Assuming that it is possible to extend the left orderings $<_{c_i}$ of the lifts to a left ordering of the group $ *_{i \in I} \wt G_i(\wt H_i \stackrel{\wt \phi_i}{\cong}\wt{H})/ \langle z \rangle$, one checks that $z$ is necessarily cofinal in the resulting ordering so that the group $*_{i\in I}G_i(H_i \stackrel{\phi_i}{\cong} H)$ inherits a circular ordering by applying Construction \ref{quotient}.  A similar argument proves the other direction of Theorem \ref{intro-main-theorem} (see the proof of Theorem \ref{main-coro}).  Note that the isomorphisms one constructs in each case appear inherently categorical in nature---something that we explain in the next section.
\end{remark}

\begin{remark}\label{importance-of-injectivity}
Suppose one were to modify the definitions of $\Circ$ and $\LOs$ to allow for non-injective homomorphisms, as in the comments following Definition \ref{the-little-categories}.  While the categories themselves will still be well-defined, Constructions \ref{lift} and \ref{quotient} can no longer be defined on morphisms in a way that yields an equivalence of categories (despite the fact that one obtains a bijection on the objects up to isomorphism), and so the corresponding generalization of Proposition \ref{categorical-equivalence} fails.  To see this, note that the object $(\{1\},c_1)$ in the modified category of circularly-ordered groups is the terminal object, while its lift $(\ZZ,<,1)$ is not terminal (for example, there is no morphism $(\QQ,<,1) \to (\ZZ,<,1)$).  Because of this, the arguments of Proposition \ref{categorical-equivalence} break down, as whenever $\phi:(G,c) \to (H,d)$ is not injective the condition $\phi^*f_d = f_c$ fails (a key fact in proving $L$ is a well-defined functor).  Interestingly, although $L$ loses its status as a well-defined functor, $Q$ remains a faithful functor that is bijective on objects up to isomorphism, but it is no longer full.
\end{remark}

\section{Circularly ordering free products with amalgamation}\label{amalgamation}

An \emph{amalgamation diagram} in a category is a diagram consisting of an object $A$, a set of objects $\{G_i\}_{i \in I}$ and for each $i \in I$, a morphism $\varphi_i:A \to G_i$.  We will denote such a diagram by $(A,\{(\varphi_i,G_i)\}_{i \in I})$.

With the goal of circularly ordering free products with amalgamation of circularly-ordered groups in a way compatible with the ordering of each factor, one may hope to simply investigate colimits of amalgamation diagrams in $\Circ$.  However this approach cannot possibly work, since even when the corresponding free product with amalgamation of the underlying groups is circularly-orderable (with an order extending the orders on the factors), there is no corresponding colimit in $\Circ$.  For example, consider the amalgamation diagram
\[
\vD = ((\{id\},c_0), \{(\iota_1,(A_1,d_1)), (\iota_2,(A_2,d_2))\})
\]
in $\Circ$ where $A_i = \ZZ$ and $d_i = c$ is the circular ordering on $\ZZ$ determined by $c(x,y,z) = 1$ whenever $x < y < z$.  Consider the free group $F_2 = \langle a_1,a_2 \rangle$ and identify $A_1$ and $A_2$ via inclusion $A_i = \langle a_i \rangle < F_2$.  Since there is a circular ordering on $F_2$ extending the circular orderings on $A_1$ and $A_2$ \cite{BS15}, if a colimit for $\vD$ exists it must be of the form $(F_2,d)$ for some circular ordering $d$ on $F_2$.  

Now consider the identity morphisms $(A_i,d_i) \to (\ZZ,c)$.  If $(F_2,d)$ were indeed a colimit of the diagram, by the universal property there would be a morphism $(F_2,d) \to (\ZZ,c)$.  However, $\Circ$ does not admit non-injective homomorphisms, so a colimit of $\vD$ cannot exist.

Even though taking colimits in $\Circ$ is impossible, it is still possible to obtain a circular ordering on a free products with amalgamation (compatible with given orderings of the factors) via a categorical construction.  To do this, we will embed $\Circ$ and $\LOs$ in categories $\BigC$ and $\BigL$ respectively that do admit colimits corresponding to circularly-ordered and left-ordered free products with amalgamation.

Many of the building blocks for these categories are familiar constructions that can be found in any elementary group cohomology textbook, such as \cite{Brown}.

\subsection{The big categories}

A \emph{sectioned central extension} is the data $(E,G,\iota,\pi,s)$ where
\begin{center}
\begin{tikzcd}
	1  \arrow[r] & \ZZ \arrow[r,"\iota"] & E \arrow[r,"\pi"] & G \arrow[l,"s",bend left = 20] \arrow[r] &1
\end{tikzcd}
\end{center}
is a central extension with a set-theoretic section $s:G \to E$ such that $s(id) = id$.  A \emph{sectioned central extension morphism} $\theta:(E,G,\iota,\pi,s) \to (F,H,\epsilon,\rho,t)$ is a group homomorphism $\theta:E \to F$ such that $\theta\iota = \epsilon$ and $\theta s = t \ol \theta$.  Here $\ol\theta:G \to H$ is defined by $\ol\theta(\pi(g)) = \rho\theta(g)$. A sectioned central extension morphism that is also a group isomorphism is called a \emph{sectioned central extension isomorphism}.

We say sectioned central extensions $(E,G,\iota,\pi,s)$ and $(F,G,\epsilon,\rho,t)$ are \emph{equivalent} if there exists a sectioned central extension morphism $\theta:E \to F$ such that $\ol\theta$ is the identity map $G \to G$.  Such a morphism is called an \emph{equivalence}.  Note that all equivalences are sectioned central extension isomorphisms, but the converse does not hold.

Recall that for a group $G$, a \emph{normalized 2-cocycle} is a function $f:G^2 \to \ZZ$ such that $f(id,g) = f(g,id) = 0$ for all $g \in G$ and $f(g_2,g_3) - f(g_1g_2,g_3) + f(g_1,g_2g_3) - f(g_1,g_2) = 0$ for all $g_1, g_2,g_3 \in G$.  Denote the set of such cocycles by $\Gamma^2(G,\ZZ)$.

\begin{definition} \label{big-categories}
Define the category $\mathbf{BigCirc}$ as follows.  Objects are pairs $(G,S)$ where $G$ is a group and $S \subset \Gamma^2(G,\ZZ)$ is a non-empty subset.  A morphism $\phi:(G,S) \to (H,T)$ is a group homomorphism $\phi:G \to H$ such that $\phi^*(T) \subset S$.  

Define the category $\mathbf{BigLO_*}$ as follows.  Objects are non-empty sets $\{(E_\alpha,G,\iota_\alpha,\pi_\alpha,s_\alpha)\}_{\alpha \in \vA}$ of sectioned central extensions such that no two in the set are equivalent.  A morphism 
\[
\theta_\vB: \{(E_\alpha,G,\iota_\alpha,\pi_\alpha,s_{\alpha})\}_{\alpha \in \vA} \to \{(F_\beta,H,\epsilon_\beta,\rho_\beta,t_\beta)\}_{\beta \in \vB}
\]
is a set of sectioned central extension morphisms
\[
\theta_\vB = \{\theta_\beta:E_{\alpha_\beta} \to F_\beta \mid \ol\theta_\beta = \ol\theta_{\beta'} \text{ for all } \beta,\beta' \in \vB\}.
\]
The identity morphism is given by $\theta_\alpha = \id:E_\alpha \to E_\alpha$ for all $\alpha \in \vA$.  Given 
\[
\{(E_\alpha,G,\iota_\alpha,\pi_\alpha,s_\alpha)\}_{\alpha \in \vA} \overset{\theta_\vB}{\lra} \{(F_\beta,H,\epsilon_\beta,\rho_\beta,t_\beta)\}_{\beta \in \vB} \overset{\psi_\Lambda}{\lra} \{(I_\lambda,J,\nu_\lambda,\omega_\lambda,r_\lambda)\}_{\lambda \in \Lambda},
\]
define $\psi_\Lambda\theta_\vB:\{(E_\alpha,G,\iota_\alpha,\pi_\alpha,s_\alpha)\}_{\alpha \in \vA}  \to  \{(I_\lambda,J,\nu_\lambda,\omega_\lambda,r_\lambda)\}_{\lambda \in \Lambda}$ by the set of sectioned central extension morphisms $\{\psi_\lambda\theta_\beta \mid \beta = \beta_\lambda\}$.  
\end{definition}
The definition of composition of morphisms in $\BigL$ can be rephrased in plain language by saying that we create the set of all possible compositions of sectioned central extension morphisms.

Recall the following standard constructions from \cite[Chapter IV.3]{Brown}. Given $f \in \Gamma^2(G,\ZZ)$, construct the \emph{associated sectioned central extension} $(\wt G_f,G,\iota_f,\pi_f,s_f)$ as follows.  Let $\wt G_f$ be the group with underlying set $\ZZ \times G$ and multiplication defined by $(a,g)(b,h) =(a + b + f(g,h),gh)$.  Define $\iota_f(a) = (a,1)$, $\pi_f((a,g)) = g$, and $s_f(g) = (0,g)$.  Conversely, given a sectioned central extension $(E,G,\iota,\pi,s)$ define the \emph{associated cocycle} $f_s \in \Gamma^2(G,\ZZ)$ by $\iota f_s(g,h) = s(g)s(h)s(gh)^{-1}$.

Define the map $\fL:\mathbf{BigCirc} \to \mathbf{BigLO_*}$ as follows.  On objects, map $(G,\{f_\alpha\}_{\alpha \in \vA})$ to the set of associated sectioned central extensions $\{(\wt G_{f_\alpha},G,\iota_{f_\alpha},\pi_{f_\alpha},s_{f_\alpha})\}_{\alpha \in\vA}$.  Let
\[
\phi:(G,\{f_\alpha\}_{\alpha \in \vA}) \to (H,\{f_\beta\}_{\beta \in \vB})
\]
be a morphism in $\mathbf{BigCirc}$.  For each $\beta \in \vB$, there is some $\alpha_\beta \in \vA$ such that $\phi^*f_\beta = f_{\alpha_\beta}$.  Define the sectioned central extension morphism $\wt\phi_\beta:\wt G_{f_{\alpha_\beta}} \to \wt H_{f_\beta}$ by $\wt\phi_\beta(a,g) = (a,\phi(g))$.  Under the functor $\fL$, map $\phi$ to $\{\wt\phi_\beta \mid \beta \in \vB\}$.

On the other hand, define the map $\fQ:\mathbf{BigLO_*} \to \mathbf{BigCirc}$ as follows.  Map an object \linebreak $\{(E_\alpha,G,\iota_\alpha,\pi_\alpha,s_\alpha)\}_{\alpha \in \vA}$ in $\mathbf{BigLO_*}$ to the object $(G,\{f_{s_\alpha}\}_{\alpha \in \vA})$ in $\mathbf{BigCirc}$.  Map a morphism \linebreak $\{\theta_\beta\}_{\beta \in \vB}:\{(E_\alpha,G,\iota_\alpha,\pi_\alpha,s_{\alpha})\}_{\alpha \in \vA} \to \{(F_\beta,H,\epsilon_\beta,\rho_\beta,t_\beta)\}_{\beta \in \vB}$ to the morphism $\ol\theta_\beta:G \to H$.

We wish to show that these rules for $\fL$ and $\fQ$ define functors that give an equivalence of categories.  Furthermore, we will see that $\Circ$ and $\LOs$ naturally embed in $\BigC$ and $\BigL$ in such a way that $L$ and $Q$ are restrictions of $\fL$ and $\fQ$.

\begin{lemma} \label{scext-equivalence}
Sectioned central extensions $(E_1,G,\iota_1,\pi_1,s_1)$ and $(E_2,G,\iota_2,\pi_2,s_2)$ are equivalent if and only if the associated cocycles $f_1,f_2 \in \Gamma^2(G,\ZZ)$ are equal.
\end{lemma}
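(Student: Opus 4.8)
The plan is to prove both implications directly from the defining relation $\iota f_s(g,h) = s(g)s(h)s(gh)^{-1}$ of the associated cocycle, together with the basic observation that in any sectioned central extension $(E,G,\iota,\pi,s)$ every element is \emph{uniquely} expressible in the normal form $\iota(n)s(g)$ with $n \in \ZZ$ and $g \in G$. This holds because $s$ is a set-theoretic section of $\pi$ and $\ker\pi = \iota(\ZZ)$: given $e \in E$, put $g = \pi(e)$, observe $e\,s(g)^{-1} \in \ker\pi = \iota(\ZZ)$, and recover $n$ from injectivity of $\iota$.

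For the forward direction I would start from an equivalence $\theta:E_1 \to E_2$, which by definition is a group homomorphism with $\theta\iota_1 = \iota_2$ and, since $\ol\theta = \id_G$, with $\theta s_1 = s_2$. Applying $\theta$ to $\iota_1 f_1(g,h) = s_1(g)s_1(h)s_1(gh)^{-1}$ and using that $\theta$ is a homomorphism intertwining $(\iota_1,s_1)$ with $(\iota_2,s_2)$ produces $\iota_2 f_1(g,h) = s_2(g)s_2(h)s_2(gh)^{-1} = \iota_2 f_2(g,h)$. As $\iota_2$ is injective (being the left-hand map of a short exact sequence), this forces $f_1 = f_2$.

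For the backward direction, assume $f_1 = f_2 =: f$ and define $\theta:E_1 \to E_2$ on normal forms by $\theta(\iota_1(n)s_1(g)) = \iota_2(n)s_2(g)$. Uniqueness of the normal form makes $\theta$ well-defined, and the evident inverse $\iota_2(n)s_2(g) \mapsto \iota_1(n)s_1(g)$ makes it a bijection. Since $s_1(\id) = s_2(\id) = \id$, one reads off $\theta\iota_1 = \iota_2$ and $\theta s_1 = s_2$, so that $\ol\theta = \id_G$; thus $\theta$ is an equivalence provided it is a homomorphism. To check this, I would put a product $(\iota_1(n)s_1(g))(\iota_1(m)s_1(h))$ into normal form: centrality of $\iota_1(\ZZ)$ lets the central factors commute past the section values, and the defining relation gives $s_1(g)s_1(h) = \iota_1(f(g,h))s_1(gh)$, yielding $\iota_1(n+m+f(g,h))s_1(gh)$. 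Carrying out the analogous reduction in $E_2$ gives $\iota_2(n+m+f(g,h))s_2(gh)$, and the two results agree exactly because $f_1 = f_2$.

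The argument is entirely routine; the only step meriting attention is the homomorphism verification in the backward direction, since that is precisely where the hypothesis $f_1 = f_2$ is consumed, and where one must invoke centrality of $\iota(\ZZ)$ to move central factors past the section. I do not expect any genuine obstacle.
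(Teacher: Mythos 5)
Your proof is correct and follows essentially the same route as the paper: the forward direction applies $\theta$ to the defining relation $\iota_1 f_1(g,h) = s_1(g)s_1(h)s_1(gh)^{-1}$ and uses injectivity of $\iota_2$, while the converse defines $\theta(\iota_1(n)s_1(g)) = \iota_2(n)s_2(g)$ via uniqueness of the normal form. The paper merely asserts that this $\theta$ is "the desired equivalence," so your explicit homomorphism verification (using centrality of $\iota(\ZZ)$ and the hypothesis $f_1 = f_2$) simply fills in details the paper leaves to the reader.
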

\begin{proof}
Suppose $\theta:E_1 \to E_2$ is an equivalence.  Then 
\[
\iota_2f_2(a,b) = s_2(a)s_2(b)s_2(ab)^{-1} = \theta(s_1(a)s_1(b)s_1(ab)^{-1}) = \theta \iota_1f_1(a,b) = \iota_2f_1(a,b)
\]
and since $\iota_2$ is injective, $f_1 = f_2$.  Conversely, note that every element in $E_i$ can be uniquely written as $\iota_i(n)s_i(a)$ for some $n \in \ZZ$ and $a \in G$.  If $f_1 = f_2$, then the map $\theta:E_1 \to E_2$ given by \linebreak $\theta(\iota_1(n)s_1(a)) = \iota_2(n)s_2(a)$ is the desired equivalence of sectioned central extensions.
\end{proof} 

\begin{lemma} \label{big-functors}
The maps $\fL:\BigC \to \BigL$ and $\fQ:\BigL \to \BigC$ are functors.
\end{lemma}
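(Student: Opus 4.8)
The claim is that $\fL$ and $\fQ$ are functors, so I must verify for each the three standard requirements: (i) that it sends objects to legitimate objects of the target category, (ii) that it sends morphisms to legitimate morphisms with the correct source and target, and (iii) that it respects identities and composition. I would organize the proof by treating $\fL$ first and then $\fQ$, since the object-level verifications lean on the two lemmas just proved (Lemma \ref{scext-equivalence} and the standard constructions recalled from \cite{Brown}).

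\textbf{Well-definedness on objects.} For $\fL$, an object $(G,\{f_\alpha\}_{\alpha\in\vA})$ with $\emptyset\neq\{f_\alpha\}\subset\Gamma^2(G,\ZZ)$ is sent to the set of associated sectioned central extensions. The one point needing checking is that this output is a legitimate object of $\BigL$, i.e.\ that no two of the $(\wt G_{f_\alpha},G,\iota_{f_\alpha},\pi_{f_\alpha},s_{f_\alpha})$ are equivalent. But the cocycle associated to $(\wt G_{f_\alpha},\ldots)$ is precisely $f_\alpha$, and distinct indices give distinct cocycles (the $f_\alpha$ form a \emph{set}), so by Lemma \ref{scext-equivalence} the extensions are pairwise inequivalent. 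For $\fQ$, an object $\{(E_\alpha,G,\iota_\alpha,\pi_\alpha,s_\alpha)\}_{\alpha\in\vA}$ is sent to $(G,\{f_{s_\alpha}\}_{\alpha\in\vA})$; here the content is that the $f_{s_\alpha}$ are again pairwise distinct, which is exactly the reverse implication of Lemma \ref{scext-equivalence} applied to the inequivalence built into the definition of a $\BigL$-object, so $\{f_{s_\alpha}\}$ is a nonempty subset of $\Gamma^2(G,\ZZ)$ as required.

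\textbf{Well-definedness on morphisms.} For $\fL$ applied to $\phi:(G,\{f_\alpha\})\to(H,\{f_\beta\})$, I must confirm three things: that for each $\beta$ the index $\alpha_\beta$ with $\phi^*f_\beta=f_{\alpha_\beta}$ exists (immediate from the morphism condition $\phi^*(T)\subset S$), that each $\wt\phi_\beta:(a,g)\mapsto(a,\phi(g))$ is a genuine sectioned central extension morphism (a direct check that it is a homomorphism using $\phi^*f_\beta=f_{\alpha_\beta}$, and that it commutes with $\iota,s$ and induces $\ol{\wt\phi_\beta}=\phi$), and crucially that the family $\{\wt\phi_\beta\}$ satisfies the coherence condition $\ol{\wt\phi_\beta}=\ol{\wt\phi_{\beta'}}$ demanded of a $\BigL$-morphism---which holds because all induced maps equal $\phi$. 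For $\fQ$ applied to $\{\theta_\beta\}_{\beta\in\vB}$, I send it to the common induced map $\ol\theta_\beta:G\to H$ (well-defined since all $\ol\theta_\beta$ agree by the $\BigL$-morphism condition) and must verify this is a $\BigC$-morphism, i.e.\ $(\ol\theta)^*(\{f_{t_\beta}\})\subset\{f_{s_\alpha}\}$; this follows from the general identity $(\ol\theta)^*f_{t_\beta}=f_{s_{\alpha_\beta}}$ relating a sectioned central extension morphism to the pullback of its associated cocycle.

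\textbf{Functoriality and the main obstacle.} Identities are routine in both cases. Composition is where the bookkeeping is most delicate, and I expect this to be the principal obstacle---not because any single equation is hard, but because composition in $\BigL$ is defined as the \emph{set of all} compatible composites $\{\psi_\lambda\theta_\beta\mid\beta=\beta_\lambda\}$, so I must track indices carefully to see that $\fL$ and $\fQ$ each intertwine this set-valued composition with ordinary composition in $\BigC$. For $\fL$ I would check that composing $\fL(\phi)$ and $\fL(\phi')$ (forming all compatible composites of the $\wt\phi$'s) reproduces $\fL(\phi'\phi)$, which reduces to the identity $(\phi'\phi)^*f=\phi^*(\phi'^*f)$ on cocycles together with the fact that $\wt{(\phi'\phi)}_\lambda(a,g)=(a,\phi'\phi(g))=\wt{\phi'}_\lambda\circ\wt{\phi}_{\beta}(a,g)$; the indexing must be arranged so that exactly the composites surviving the compatibility constraint appear. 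For $\fQ$, functoriality on composition amounts to observing that the induced map of a composite of sectioned central extension morphisms is the composite of the induced maps, which is immediate from the definition of $\ol\theta$. Once the index-matching is handled carefully, both verifications are formal.
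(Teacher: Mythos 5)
Your proposal is correct and follows essentially the same route as the paper's proof: well-definedness on objects via Lemma \ref{scext-equivalence}, verification that each $\wt\phi_\beta$ is a sectioned central extension morphism using $\phi^*f_\beta = f_{\alpha_\beta}$ together with compatibility with $\iota$ and $s$, and for $\fQ$ the key identity $\ol\theta^*f_t = f_s$ (which the paper establishes by the one-line computation $\epsilon f_t(\ol\theta(a),\ol\theta(b)) = t(\ol\theta(a))t(\ol\theta(b))t(\ol\theta(ab))^{-1} = \theta\iota f_s(a,b)$ and injectivity of $\epsilon$). The only difference is emphasis: you treat the set-valued composition and index bookkeeping in $\BigL$ as the main delicacy, whereas the paper dismisses composition as routine ($\wt{\phi\varphi} = \wt\phi\wt\varphi$ and $(\ol\theta)(\ol\psi) = \ol{\theta\psi}$), and your added care there is harmless and arguably an improvement.
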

\begin{proof}
The map $\fL$ is well-defined on objects by Lemma \ref{scext-equivalence}.  To prove $\fL$ is well-defined on morphisms, let $f_i = \Gamma^2(G_i,\ZZ)$ for $i = 1,2$, and $\phi:G_1 \to G_2$ be a homomorphism such that $\phi^*f_2 = f_1$.  It suffices to show $\wt \phi:((\wt G_1)_{f_1},G_1,\iota_{f_1},\pi_{f_1},s_{f_1}) \to ((\wt G_2)_{f_2},G_2,\iota_{f_2},\pi_{f_2},s_{f_2})$ given by $(n,a) = (n,\phi(a))$ is a sectioned central extension morphism.  We have $\wt\phi \iota_1 = \iota_2$ and $\wt \phi s_1 = s_2 \phi$ since $\ol{\wt\phi} = \phi$.  Since $\phi^*f_2 = f_1$, $\wt\phi$ is a homomorphism.  It is easy to check $\wt{\phi\varphi} = \wt\phi\wt\varphi$ and the identity map is lifted to the identity map.  Therefore $\fL$ is a well-defined functor.

To check $\fQ$ is a well-defined functor it suffices to verify that if
\[
\theta:(E,G,\iota,\pi,s) \to (F,H,\epsilon,\rho,t)
\]
is a sectioned central extension morphism, then $\ol \theta^*f_t = f_s$.  We have
\[
\epsilon f_t(\ol\theta(a),\ol\theta(b)) = t(\ol\theta(a))t(\ol\theta(b))t(\ol\theta(ab))^{-1} = \theta(s(a)s(b)s(ab)^{-1}) = \theta\iota f_s(a,b) = \epsilon f_s(a,b)
\]
and since $\epsilon$ is injective, $\ol \theta^*f_t = f_s$.  Noting that $\fQ$ preserves identity morphisms and $(\ol\theta)(\ol\psi) = \ol{\theta\psi}$ completes the proof.
\end{proof}


\begin{proposition}\label{big-categorical-equivalence}
The functors $\fQ$ and $\fL$ provide an equivalence of categories $\BigL \cong \BigC$.
\end{proposition}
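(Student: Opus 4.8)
The plan is to follow the template of Proposition \ref{categorical-equivalence} and produce natural isomorphisms $\fQ\fL \simeq \mathbf 1_{\BigC}$ and $\fL\fQ \simeq \mathbf 1_{\BigL}$, with Lemma \ref{scext-equivalence} supplying the components of the latter. As before, an isomorphism in either category is just a morphism whose underlying group map (or family of group maps) is an isomorphism, since both categories admit a faithful forgetful functor to groups that discards the cocycle and section data.

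First I would verify that $\fQ\fL$ is literally the identity functor on $\BigC$. On an object $(G,\{f_\alpha\}_{\alpha\in\vA})$, the functor $\fL$ produces the associated sectioned central extensions $\wt G_{f_\alpha}$, and the standard computation $s_{f_\alpha}(g)s_{f_\alpha}(h)s_{f_\alpha}(gh)^{-1} = \iota_{f_\alpha}(f_\alpha(g,h))$ (see \cite[Chapter IV.3]{Brown}) shows that the cocycle $\fQ$ reads back off is again $f_\alpha$; hence $\fQ\fL(G,\{f_\alpha\}) = (G,\{f_\alpha\})$. On a morphism $\phi$, $\fL$ yields the family $\wt\phi_\beta(n,g) = (n,\phi(g))$ whose common induced base map is $\phi$, so $\fQ\fL(\phi) = \phi$.

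The substance is the natural isomorphism $\mu:\fL\fQ \Rightarrow \mathbf 1_{\BigL}$. For an object $X = \{(E_\alpha,G,\iota_\alpha,\pi_\alpha,s_\alpha)\}_{\alpha\in\vA}$, inequivalence of the extensions forces (by Lemma \ref{scext-equivalence}) the cocycles $f_{s_\alpha}$ to be distinct, so $\fL\fQ(X) = \{(\wt G_{f_{s_\alpha}},G,\dots)\}_{\alpha\in\vA}$ is again legitimately indexed by $\vA$, and by the object computation above $\wt G_{f_{s_\alpha}}$ and $E_\alpha$ share the associated cocycle $f_{s_\alpha}$. Lemma \ref{scext-equivalence} therefore furnishes a canonical equivalence $\mu_\alpha:\wt G_{f_{s_\alpha}}\to E_\alpha$, namely $\mu_\alpha(n,g) = \iota_\alpha(n)s_\alpha(g)$, with $\ol{\mu_\alpha} = \id_G$. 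Since these share the base map $\id_G$, the family $\mu_X = \{\mu_\alpha\}$ is a morphism of $\BigL$ with inverse $\{\mu_\alpha^{-1}\}$, hence an isomorphism. For naturality against a morphism $\theta_\vB = \{\theta_\beta:E_{\alpha_\beta}\to F_\beta\}_{\beta\in\vB}$ with common induced map $\ol\theta$, I would compare the $\beta$-components $\theta_\beta\mu_{\alpha_\beta}$ and $\mu'_\beta\wt{\ol\theta}_\beta$ of the two composites; evaluating on $(n,g)$ and invoking $\theta_\beta\iota_{\alpha_\beta} = \epsilon_\beta$ and $\theta_\beta s_{\alpha_\beta} = t_\beta\ol\theta$ reduces both sides to $\epsilon_\beta(n)t_\beta(\ol\theta(g))$, so the square commutes and $\mu$ is a natural isomorphism.

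I expect the genuine difficulty to be organizational rather than mathematical. The cocycle identities are routine, but the definition of morphisms in $\BigL$ as target-indexed families, each carrying its own source index $\alpha_\beta$, together with the ``all possible compositions'' rule, makes the verifications that $\mu_X$ is an isomorphism and that the naturality square commutes the places where careful index bookkeeping is essential and errors are easy to make.
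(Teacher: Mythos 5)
Your proposal is correct and takes essentially the same approach as the paper: the identical computation showing $\fQ\fL = \mathbf{1}_{\BigC}$ on the nose (via $f = f_{s_f}$), and the identical components $\mu_\alpha((n,a)) = \iota_\alpha(n)s_\alpha(a)$ assembled into a natural isomorphism $\fL\fQ \simeq \mathbf{1}_{\BigL}$, with the same naturality check using $\theta_\beta\iota_{\alpha_\beta} = \epsilon_\beta$ and $\theta_\beta s_{\alpha_\beta} = t_\beta\ol\theta_\beta$. The indexing bookkeeping you flag is handled the same way in the paper's proof, so no changes are needed.
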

\begin{proof}
We first show $\fQ\fL = \mathbf{1}_\BigC$.  Suppose $f \in \Gamma^2(G,\ZZ)$ and consider the associated sectioned central extension $(\wt G_f,G,\iota_f,\pi_f,s_f)$.  Then 
\[
\iota_f(f_{s_f}(a,b)) = (0,a)(0,b)(0,ab)^{-1} = (f(a,b),ab)(-f(ab,(ab)^{-1}),(ab)^{-1}) = (f(a,b),id)
\]
so $f = f_{s_f}$. It follows that $\fQ\fL(G,S) = (G,S)$ for all $(G,S) \in \BigC$.  Moreover for every morphism $\phi:(G,S) \to (H,T)$ we have  $\ol{\wt\phi} = \phi$, so we conclude $\fQ\fL = \mathbf{1}_\BigC$.

Next we will show $\fL\fQ \simeq \mathbf{1}_\BigL$.  Let $(E,G,\iota,\pi,s)$ be a sectioned central extension.  Every element of $E$ is uniquely written as $\iota(n)s(a)$ for $n \in \ZZ$ and $a \in G$.  Furthermore, $$(\iota(n)s(a))(\iota(m)s(b)) = \iota(n+m+f_s(a,b))s(ab).$$  Define $\mu_E:\wt G_{f_s} \to E$ by
\[
\mu_E((n,a)) = \iota(n)s(a)
\]
and observe $\mu_E$ is a sectioned central extension isomorphism.  Note that isomorphisms in $\BigL$ are sets of sectioned central extension isomorphisms.  Now suppose $\{(E_\alpha,G,\iota_\alpha,\pi_\alpha,s_\alpha)\}_{\alpha \in \vA}$ is an object in $\BigL$.  Abusing notation, let $f_\alpha = f_{s_\alpha}$ and define the isomorphism
\[
\mu_{E_\vA} = \{\mu_{E_\alpha}:\wt G_{f_\alpha} \to E_\alpha\}:\{(\wt G_{f_\alpha},G,\iota_{f_\alpha},\pi_{f_\alpha},s_{f_\alpha})\}_{\alpha \in \vA} \to \{(E_\alpha,G,\iota_\alpha,\pi_\alpha,s_\alpha)\}_{\alpha \in \vA}.
\]  
We will show the $\mu_{E_\vA}$ give a natural isomorphism $\fL\fQ \simeq \mathbf{1}_\BigL$.  Let 
\[
\theta_\vB = \{\theta_\beta:E_{\alpha_\beta} \to F_{\beta}\}:\{(E_\alpha,G,\iota_\alpha,\pi_\alpha,s_{\alpha})\}_{\alpha \in \vA} \to \{(F_\beta,H,\epsilon_\beta,\rho_\beta,t_\beta)\}_{\beta \in \vB}
\]
be a morphism in $\BigL$.  Fix $\beta \in \vB$.  Then 
\[
\theta_\beta\mu_{E_{\alpha_\beta}}((n,a)) = \theta_\beta(\iota_{\alpha_\beta}(n)s_{\alpha_\beta}(a)) = \epsilon_\beta(n)t_\beta(\ol\theta_\beta(a)) = \mu_{F_\beta}((n,\ol\theta_\beta(a))) = \mu_{F_\beta}\wt{\ol\theta}_\beta((n,a))
\]
so $\theta_\beta\mu_{E_{\alpha_\beta}} = \mu_{F_\beta}\wt{\ol\theta}_\beta:\wt G_{f_{\alpha_\beta}} \to F_\beta$.  Then
\[
\theta_\vB\mu_{E_\vA} = \{\theta_\beta\mu_{E_{\alpha_\beta}} \mid \beta \in \vB\} = \{\mu_{F_\beta}\wt{\ol\theta}_\beta \mid \beta \in \vB\} = \mu_{F_\vB}\fL\fQ(\theta_\vB).
\]
Therefore $\fL\fQ \simeq \mathbf{1}_\BigL$, completing the proof.
\end{proof}

We now shift our attention to identifying $\mathbf{Circ}$ and $\mathbf{LO_*}$ embedded in $\mathbf{BigCirc}$ and $\mathbf{BigLO_*}$ respectively.  Define a functor $\fI_{C}:\mathbf{Circ} \to \mathbf{BigCirc}$ by $\fI_C(G,c) = (G,\{f_c\})$, where $f_c \in \Gamma^2(G,\ZZ)$ is defined in Construction \ref{lift}.  On morphisms, set $\fI_C(\phi) = \phi$.  Since a morphism $\phi:(G,c) \to (H,d)$ in $\Circ$ is an injective homomorphism, $\phi^*f_d = f_c$ so $\fI_C$ is a well-defined functor.

Define a functor $\fI_L:\mathbf{LO_*} \to \mathbf{BigLO_*}$ as follows.  Let $(G,<,z)$ be an object in $\mathbf{LO_*}$.  Define \linebreak $\fI_L(G,<,z) = \{(G,G/\langle z \rangle,\iota,\pi,s)\}$ where $\iota(1) = z$, $\pi:G\to G/\langle z \rangle$ is the quotient map, and $s(g\langle z \rangle) = \ol g$ where $\ol g$ is the minimal representative of $g\langle z \rangle$.  On morphisms, define $\fI_L(\theta) = \{\theta\}$.  To see $\fI_L$ is a well-defined functor, let $\theta:(G,<,z) \to (H,\prec,w)$ be a morphism in $\LOs$, and let $\fI_L(G,<,z) = (G,G/\langle z \rangle,\iota,\pi,s)$ and $\fI_L(H,\prec,w) = (H,H /\langle w \rangle,\epsilon,\rho,t)$.  Since $\theta$ is a morphism in $\LOs$, $\theta(z) = w$ and $\theta(\ol g) = \ol{\theta(g)}$ for all $g \in G$.  Therefore $\theta\iota = \epsilon$ and $\theta s = t \ol \theta$ so $\fI_L$ is a well-defined functor.

\begin{lemma}\label{useful-inclusion-properties}
The functors $\fI_L$ and $\fI_C$ have the following properties.
\begin{enumerate}
\item The functors $\fI_L$ and $\fI_C$ are faithful.
\item A morphism $\phi:\fI_C(G,c) \to \fI_C(H,d)$ in $\BigC$ is of the form $\fI_C(\phi)$ if and only if $\phi$ is injective.
\item A morphism $\{\theta\}:\fI_L(G,<,z) \to \fI_L(H,\prec,w)$ is of the form $\fI_L(\theta)$ if and only if $\theta$ is injective.
\item The functors $\fI_L$ and $\fI_C$ are injective on objects.
\item The functors $\fI_L$ and $\fI_C$ are injective on morphisms.
\end{enumerate}
\end{lemma}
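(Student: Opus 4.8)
The plan is to dispatch the formal statements (1), (4), (5) first, and then concentrate on the substantive equivalences (2) and (3). For faithfulness (1), observe that $\fI_C$ records a morphism $\phi$ as the underlying homomorphism $\phi$, and $\fI_L$ records $\theta$ as the singleton $\{\theta\}$; in both cases the original morphism can be read back off, so the functors are injective on each hom-set. The forward implications in (2) and (3) are also immediate, since any morphism in the image of $\fI_C$ or $\fI_L$ is by definition an order-preserving homomorphism of circularly- or left-ordered groups, and such maps were already noted to be injective.

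For injectivity on objects (4) the key is that the cocycle (respectively the section together with the central element) determines the order. If $\fI_C(G,c) = \fI_C(G',c')$ then $G = G'$ and $f_c = f_{c'}$, and I would recover $c$ from $f_c$: by left-invariance $c(a_1,a_2,a_3) = c(\id, a_1^{-1}a_2, a_1^{-1}a_3)$, while the formula in Construction \ref{lift} gives, for distinct $\id, x, y$, that $c(\id, x, y) = 1$ exactly when $f_c(x, x^{-1}y) = 0$; hence $c = c'$. (Equivalently, $f_c$ determines the lift $(\wt G_c, <_c, z_c)$, and then $\eta_G$ together with $\eta_G^* c = c_{<_c}$ from Lemma \ref{key-equivalence} recover $c$.) If $\fI_L(G,<,z) = \fI_L(G',<',z')$ then the two sectioned central extensions coincide as tuples, so $z = \iota(1) = z'$ and $s = s'$; since every element is uniquely $z^n s(g\langle z \rangle)$ with $P_< = \{z^n s(g\langle z\rangle) : n\geq 0\}\sm\{\id\}$, the data $(z,s)$ pins down $<$, giving $<\,=\,<'$. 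Statement (5) then follows formally, since an image morphism remembers its source and target (distinguished by (4)) and its underlying map.

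For the converse of (2), suppose $\phi:\fI_C(G,c)\to\fI_C(H,d)$ is injective, which by Definition \ref{big-categories} means $\phi$ is an injective homomorphism with $\phi^*f_d = f_c$. I would form $\wt\phi:\wt G_c \to \wt H_d$, $(n,a)\mapsto (n,\phi(a))$, which is a homomorphism precisely because $\phi^*f_d = f_c$ (as in the definition of $\fL$ and Lemma \ref{big-functors}) and which sends $z_c$ to $z_d$. Since the positive cone of $<_c$ consists of the pairs with nonnegative first coordinate minus $(0,\id)$, and injectivity yields $\phi(a) = \id \iff a = \id$, one checks that $(n,a)$ is positive if and only if $\wt\phi(n,a)$ is positive; thus $\wt\phi$ is a morphism in $\LOs$. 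Applying $Q$ together with $\eta_G, \eta_H$ from Lemma \ref{key-equivalence}, which satisfy $\eta_H\ol{\wt\phi} = \phi\eta_G$, and using $\eta_G^*c = c_{<_c}$, yields $\phi^*d = c$, so $\phi$ is a morphism in $\Circ$ and $\phi = \fI_C(\phi)$. Statement (3) is dual: for an injective sectioned central extension morphism $\theta$ between $\fI_L(G,<,z)$ and $\fI_L(H,\prec,w)$, the relations $\theta\iota = \epsilon$ and $\theta s = t\ol\theta$ give $\theta(z) = w$ and force $\theta(\ol g)$ to be the minimal representative of $\theta(g)\langle w\rangle$; writing a general element as $z^n\ol g$ and comparing $\theta(z^n\ol g) = w^n\theta(\ol g)$ with the positive cone, the only delicate case is $n = 0$, where positivity amounts to $g\notin\langle z\rangle$ versus $\theta(g)\notin\langle w\rangle$, equivalent precisely because $\theta$ is injective and $\theta(z^k) = w^k$.

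The main obstacle is pinpointing where injectivity is genuinely used in these converses: the cocycle/section conditions by themselves make the candidate lift a homomorphism respecting the central element, but only injectivity prevents a nontrivial element from being pushed into the central subgroup $\langle z\rangle$ (equivalently, a strictly positive element from collapsing to the identity in the quotient), which is exactly the failure that would destroy order-preservation. Making this ``no collapse into the center'' observation precise, and confirming that dropping injectivity really does produce morphisms of $\BigC$ and $\BigL$ outside the images of $\fI_C$ and $\fI_L$, is the crux of the argument.
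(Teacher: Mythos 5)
Your proposal is correct and follows essentially the same route as the paper: the forward implications and (1), (5) are formal, (4) comes from recovering $c$ from $f_c$ via left-invariance and recovering $<$ from $(z,s)$ via the decomposition $z^n\ol g$, and the converses of (2) and (3) use the lift $\wt\phi$ together with $\eta_H\ol{\wt\phi}\eta_G^{-1} = \phi$ (as in Proposition~\ref{categorical-equivalence}) and the computation $\theta(\iota(n)s(a\langle z\rangle)) = \epsilon(n)t(\ol\theta(a\langle z\rangle))$, with injectivity ruling out collapse into $\langle z\rangle$ exactly as the paper does. The only inessential differences are that you argue (4) for $\fI_C$ directly rather than by contraposition, and your closing worry about exhibiting non-injective morphisms in $\BigC$ and $\BigL$ is not actually needed for the biconditional.
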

\begin{proof}
Property (1) is immediate.  For (2), since all morphisms in $\Circ$ are injective, any morphism of the form $\fI_C(\phi)$ in $\BigC$ is also injective.  Conversely, suppose $(G,c)$ and $(H,d)$ are circularly-ordered groups and $\phi:G \to H$ is injective such that $\phi^*f_d = f_c$.  Then by the proof of Proposition \ref{categorical-equivalence}, $\phi = \eta_H\ol{\wt\phi}\eta_G^{-1}$ so $\phi$ is a morphism in $\Circ$.  For (3), let $\fI_L(G,<,z) = \{(G,\ol G, \iota,\pi,s)\}$ and note that the positive cone $P_< \subset G$ of the left order $<$ is given by $\{\iota(n)s(a\langle z \rangle) \mid n \geq 0\} \sm\{id\}$.  Let $\fI_L(H,\prec,w) = \{(H, \ol H, \epsilon, \rho, t)\}$ and let $\theta:G \to H$ be an injective sectioned central extension morphism.  Then $\theta(\iota(n)s(a\langle z \rangle)) = \epsilon(n)t(\theta(a)\langle w \rangle)$ so $\theta(P_<) = P_\prec$ and $\theta$ is a morphism in $\LOs$.  Conversely, every morphism of the form $\fI_L(\theta)$ is injective since all homomorphisms in $\LOs$ are injective.   For injectivity of $\fI_L$ on objects, suppose $\fI_L(G,<,z) = (G,\ol G,\iota,\pi,s)$ and $\fI_L(H,\prec,w) = (H,\ol H, \epsilon,\rho,t)$ are the same object in $\BigL$.  Then $G = H$, and since $\iota = \rho$, $\iota(1) = z = w =\epsilon(1)$.  The positive cones $P_<$ and $P_\prec$ are equal since $t = s$, so $(G,<,z) = (H,\prec,w)$. To see $\fI_C$ is injective on objects, suppose $(G,c) \neq (G,d)$ in $\Circ$.  Since circular orderings are invariant under left multiplication, we may assume there are $a,b \in G$ such that $c(id,a,ab) \neq d(id,a,ab)$ so $f_c(a,b) \neq f_d(a,b)$.  Therefore $\fI_C(G,c) \neq \fI_C(G,d)$, proving (4).  Finally $(5)$ follows from $(1)$ and $(4)$.   
\end{proof}

Lemma \ref{useful-inclusion-properties} allows us to identify $\mathbf{LO_*}$ and $\mathbf{Circ}$ as subcategories of $\mathbf{BigLO_*}$ and $\mathbf{BigCirc}$ respectively.  Indeed, we can conclude that $\Circ$ and $\LOs$ are isomorphic (not just equivalent) to the subcategories of $\BigC$ and $\BigL$ consisting of objects in the image of $\fI_C$ and $\fI_L$, and morphisms consisting of all injective morphisms in the respective categories.  The next lemma shows that the equivalences $Q$ and $L$ from Proposition \ref{categorical-equivalence} are induced by the equivalences $\fQ$ and $\fL$ from Proposition \ref{big-categorical-equivalence}.

\begin{lemma}\label{induced-equivalence}
We have equality of functors $\fI_CQ = \fQ\fI_L:\LOs \to \BigC$ and a natural isomorphism of functors $\fI_LL \simeq \fL\fI_C:\Circ \to \BigL$.
\end{lemma}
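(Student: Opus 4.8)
The plan is to treat the two claims in sequence, establishing the strict equality $\fI_C Q = \fQ\fI_L$ first and the natural isomorphism $\fI_L L \simeq \fL\fI_C$ second. Both amount to unwinding the definitions of the six functors, but each rests on one nontrivial input: the identity $f_< = f_{c_<}$ from Lemma \ref{key-equivalence} for the first claim, and the observation that Construction \ref{lift} builds \emph{exactly} the cocycle extension $\wt G_{f_c}$ for the second.

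For the equality on objects, I would chase $(G,<,z)\in\LOs$ through both composites. On one side, $Q(G,<,z) = (\ol G, c_<)$ and $\fI_C$ attaches the cocycle $f_{c_<}$, giving $(\ol G,\{f_{c_<}\})$. On the other side, $\fI_L(G,<,z)$ is the singleton sectioned central extension with base $\ol G$ and section $s(g\langle z\rangle) = \ol g$, whose associated cocycle $f_s$ satisfies $z^{f_s(a\langle z\rangle,b\langle z\rangle)} = \ol a\,\ol b\,(\ol{ab})^{-1}$; this is precisely the cocycle $f_<$ defined before Lemma \ref{key-equivalence}, so $\fQ\fI_L(G,<,z) = (\ol G,\{f_<\})$. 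Lemma \ref{key-equivalence} yields $f_< = f_{c_<}$, so the two objects coincide on the nose. On morphisms, both $Q(\theta)$ and $\fQ\fI_L(\theta)$ unwind to the same induced map $\ol\theta(g\langle z\rangle) = \theta(g)\langle w\rangle$ with domain $\ol G$ and codomain $\ol H$, so the composites agree as functors.

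For the natural isomorphism the crucial point is that the total group of $\fI_L L(G,c)$, namely $\wt G_c$, and the total group of $\fL\fI_C(G,c)$, namely $\wt G_{f_c}$, are literally the same group: both are $\ZZ\times G$ with multiplication $(n,a)(m,b) = (n+m+f_c(a,b),ab)$, since Construction \ref{lift} is the cocycle extension built from $f_c$. What differs is the recorded base: $\fI_L L(G,c)$ carries the base $\wt G_c/\langle z_c\rangle$ with section $s((n,a)\langle z_c\rangle)=(0,a)$, whereas $\fL\fI_C(G,c)$ carries the base $G$ with section $s_{f_c}(g)=(0,g)$. I would therefore take the component $\zeta_{(G,c)}$ to be the singleton whose unique member is the identity map $\wt G_c \to \wt G_{f_c}$, and verify it is a sectioned central extension isomorphism: it sends $\iota$ to $\iota_{f_c}$ (both map $1\mapsto(1,id)$), its induced base map is exactly $\eta_G:\wt G_c/\langle z_c\rangle \to G$, $(n,a)\langle z_c\rangle\mapsto a$, and under $\eta_G$ the sections satisfy $\theta s = s_{f_c}\eta_G$. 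As each object is a singleton set in $\BigL$, this lone isomorphism is an isomorphism in $\BigL$.

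Naturality is then automatic. For a morphism $\phi:(G,c)\to(H,d)$ in $\Circ$ one has $\phi^*f_d = f_c$, and I would observe that both $\fI_L L(\phi)$ and $\fL\fI_C(\phi)$ unwind to the \emph{same} set map $\wt\phi:(n,g)\mapsto(n,\phi(g))$ between the coinciding total groups $\wt G_c = \wt G_{f_c}$ and $\wt H_d = \wt H_{f_d}$; since the components $\zeta_{(G,c)},\zeta_{(H,d)}$ are identities on these groups, the naturality square commutes on the nose, both composites equalling $\wt\phi$ (here composition of singleton morphisms in $\BigL$ is ordinary composition, so no indexing subtlety arises). I expect the work to be almost entirely organizational rather than mathematical—the genuine inputs are only Lemma \ref{key-equivalence} and the identification of Construction \ref{lift} with $\wt G_{f_c}$. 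The one subtlety worth flagging is \emph{why} the first claim is a strict equality while the second is merely a natural isomorphism: both composites in the first claim record the base as $\ol G$, whereas the two composites in the second record the genuinely distinct (though canonically isomorphic) bases $\wt G_c/\langle z_c\rangle$ and $G$.
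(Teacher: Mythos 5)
Your proposal is correct and follows essentially the same route as the paper: the strict equality is reduced, exactly as in the paper, to the identity $f_< = f_{c_<}$ of Lemma \ref{key-equivalence}, and the natural isomorphism uses the same component $\zeta_G:\wt G_c \to \wt G_{f_c}$, the identity on underlying sets $(n,a)\mapsto(n,a)$, with naturality checked by noting both composites send $\phi$ to $(n,a)\mapsto(n,\phi(a))$. Your explicit verification that the induced base map of $\zeta_G$ is $\eta_G$ and that $\zeta_G s_c = s_{f_c}\eta_G$ merely spells out what the paper declares clear, and your closing remark on why one composite agrees on the nose while the other only up to the change of base $\wt G_c/\langle z_c\rangle \cong G$ is an accurate gloss, not a different argument.
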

\begin{proof}
Let $(G,<,z)$ be an object in $\LOs$.  Then $\fI_CQ(G,<,z) = (\ol G,\{f_{c_<}\})$ and $\fQ\fI_L(G,<,z) = (\ol G,\{f_s\})$ where $s:\ol G \to G$ is $s(g\langle z \rangle) = \ol g$.  Therefore $\fI_CQ(G,<,z) = \fQ\fI_L(G,<,z)$ by Lemma \ref{key-equivalence} since in the notation of Lemma \ref{key-equivalence}, $f_< = f_s$.  For any morphism $\theta:(G,<,z) \to (H,\prec,w)$ in $\LOs$, $\fI_CQ(\theta) = \fQ\fI_L(\theta) = \ol \theta:\ol G \to \ol H$.  Therefore $\fI_CQ =\fQ\fI_L$.

On the other hand, $\fI_LL(G,c) = \fI_L(\wt G_c,<_c,z_c)$, which we will denote by $\{(\wt G_c, \ol{\wt G}_c, \iota_c,\pi_c,s_c)\}$, and note that $\fL\fI_C(G,c) = \{(\wt G_{f_c}, G, \iota_{f_c}, \pi_{f_c},s_{f_c})\}$.  Define $\zeta_G:\wt G_c \to \wt G_{f_c}$ by $\zeta_G((n,a)) = (n,a)$.  Since the multiplication on both $\wt G_c$ and $\wt G_{f_c}$ are given by $(n,a)(m,b) = (n+m+f_c(a,b),ab)$, it is clear that $\zeta_G$ is a sectioned central extension isomorphism.  Therefore $\{\zeta_G\}:\fI_LL(G,c) \to \fL\fI_C(G,c)$ is an isomorphism in $\BigL$.  Now let $\phi:(G,c) \to (H,d)$ be a morphism in $\Circ$.  Then $\{\zeta_H\}\fI_LL(\phi)(n,a) = \fL\fI_C\{\zeta_G\}(\phi)(n,a) = (n,\phi(a))$ and $\fI_LL \simeq \fL\fI_C$.
\end{proof}

\begin{remark}
It is clear that the image of $\fI_L$ consists of singletons of sectioned central extensions (and similarly the image of $\fI_C$ consists of pairs $(G,S)$ where $S$ is a set containing a single 2-cocycle).  However, if we restricted our attention to the subcategories of $\mathbf{BigCirc}$ and $\mathbf{BigLO_*}$ consisting of sets of size 1, we would again be in a situation where amalgamation diagrams do not have colimits.
\end{remark}

\subsection{Amalgamated free products in $\BigC$ and $\BigL$}

We wish to show that colimits of amalgamation diagrams in the image of $\Circ$ and $\LOs$ inside $\BigC$ and $\BigL$ exist.  By Lemma \ref{useful-inclusion-properties}, it suffices to show that in $\BigC$, colimits of amalgamation diagrams exist when the objects in the diagrams are of the form $(G,S)$ where $S$ is a singleton, and all morphisms in the diagram are injective homomorphisms.

In $\BigC$, consider the amalgamation diagram $\vD = ((H,\{d\}), \{(\phi_i,(G_i,\{c_i \}))\}_{i \in I})$ where $\phi_i:H \to G_i$ is an injective homomorphism for all $i \in I$ that identifies $H$ with a subgroup $H_i \subset G$.  For the remainder of this section, let $\wt G_i$ denote $(\wt{G_i})_{c_i}$, $\wt{H_i}$ will denote the lift of $H_i$ with respect to the restriction of $c_i$, and $\wt{H}$ will denote $\wt{H}_d$.  Set
\[
\vG_{\vD} = *_{i \in I} G_i(H_i \overset{\phi_i}\cong H) \quad \text{and} \quad \vG_{\wt \vD} = *_{i \in I} \wt G_i(\wt H_i \overset{\wt \phi_i}\cong \wt H).
\]
Let $\delta_i:G_i \to \vG_{\vD}$ be the inclusion homomorphisms, and set $$T = \{f \in \Gamma^2(\vG_{\vD},\ZZ) \mid \delta_i^*f = c_i \text{ for all } i \in I \}.$$  

\begin{lemma} \label{BigC-colimit}
The object $(\vG_{\vD},T)$ with the morphisms $\delta_i:(G_i,\{c_i\}) \to (\vG_{\vD},T)$ is the colimit of $\vD$.
\end{lemma}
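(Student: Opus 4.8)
The plan is to verify the two defining features of a colimit of the amalgamation diagram $\vD$ in $\BigC$: that $(\vG_{\vD},T)$ is a bona fide object carrying a cocone $\{\delta_i\}$, and that this cocone is universal. The cocone condition is immediate: the composites $\delta_i\circ\phi_i\colon H\to\vG_{\vD}$ all coincide with the inclusion of the amalgamated subgroup by construction of $\vG_{\vD}$, and each $\delta_i$ is a morphism in $\BigC$ precisely because $\delta_i^*(T)\subseteq\{c_i\}$, which is exactly how $T$ is defined. Moreover $(\delta_i\phi_i)^*f=\phi_i^*\delta_i^*f=\phi_i^*c_i=d$ for $f\in T$, using that $\phi_i$ is a morphism of the diagram. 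The one nontrivial point concealed in ``$(\vG_{\vD},T)$ is an object'' is that $T$ must be nonempty, and this is where the real work lies.

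To show $T\neq\emptyset$ I would manufacture an explicit cocycle from the amalgamated free product of the lifts $\vG_{\wt{\vD}}=*_{i\in I}\wt G_i(\wt H_i\overset{\wt\phi_i}{\cong}\wt H)$. Since $\wt\phi_i(n,a)=(n,\phi_i(a))$, each $\wt\phi_i$ carries the central generator $z_d=(1,id)$ of $\wt H$ to $z_{c_i}$, so in $\vG_{\wt{\vD}}$ all the $z_{c_i}$ are identified to a single element $z$. As $z$ is central and of infinite order in each factor $\wt G_i$, and the factors both generate and (via injectivity of the $\wt\phi_i$) embed into $\vG_{\wt{\vD}}$, the subgroup $\langle z\rangle\cong\ZZ$ is central. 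The key structural claim is then $\vG_{\wt{\vD}}/\langle z\rangle\cong\vG_{\vD}$, which I would prove by a presentation argument: adjoining the relation $z=1$ to the standard presentation of the amalgamated free product collapses each factor $\wt G_i$ to $\wt G_i/\langle z_{c_i}\rangle\cong G_i$ and each amalgamating identification on $\wt H$ to the corresponding identification on $\wt H/\langle z_d\rangle\cong H$, recovering exactly the presentation of $\vG_{\vD}$. This exhibits a central extension $1\to\ZZ\overset{\iota}{\to}\vG_{\wt{\vD}}\overset{\pi}{\to}\vG_{\vD}\to1$ with $\iota(1)=z$.

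From this extension I would extract the cocycle. Choose a normalized set-theoretic section $s\colon\vG_{\vD}\to\vG_{\wt{\vD}}$ that on each $\delta_i(G_i)$ agrees with the minimal-representative section $g\mapsto(0,g)$ of $\wt G_i$; these prescriptions are mutually consistent on the overlap $H$ because $\wt\phi_i(0,h)=\wt\phi_j(0,h)$ in $\vG_{\wt{\vD}}$, and $s$ may be extended arbitrarily (but normalized) elsewhere. Let $f\in\Gamma^2(\vG_{\vD},\ZZ)$ be the associated cocycle, $\iota(f(g,g'))=s(g)s(g')s(gg')^{-1}$. For $a,b\in G_i$ all three of $\delta_i a,\delta_i b,\delta_i(ab)$ lie in $\delta_i(G_i)$, so the computation takes place inside the image of $\wt G_i$ and yields $(0,a)(0,b)(0,ab)^{-1}=\iota(c_i(a,b))$, exactly as in the $\fQ\fL=\mathbf 1$ computation of Proposition \ref{big-categorical-equivalence}. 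Hence $\delta_i^*f=c_i$ for every $i$, so $f\in T$ and $T\neq\emptyset$.

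Finally the universal property is a formal consequence of that of amalgamated free products of groups. Given any object $(K,U)$ with a cocone $\psi_i\colon(G_i,\{c_i\})\to(K,U)$, the underlying homomorphisms $\psi_i\colon G_i\to K$ agree on $H$, so there is a unique group homomorphism $u\colon\vG_{\vD}\to K$ with $u\delta_i=\psi_i$. To see $u$ is a $\BigC$-morphism, i.e. $u^*U\subseteq T$, fix $g\in U$; then for each $i$ we have $\delta_i^*(u^*g)=(u\delta_i)^*g=\psi_i^*g=c_i$, since $\psi_i^*U\subseteq\{c_i\}$, so $u^*g\in T$ by the definition of $T$. Uniqueness of $u$ as a morphism is inherited from uniqueness on the underlying groups. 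The main obstacle throughout is the nonemptiness of $T$, and within that the identification $\vG_{\wt{\vD}}/\langle z\rangle\cong\vG_{\vD}$ together with the section-compatibility needed to force $\delta_i^*f=c_i$ on the nose rather than merely up to coboundary.
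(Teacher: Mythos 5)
Your proof is correct, and its first and last paragraphs are precisely the paper's entire proof of Lemma \ref{BigC-colimit}: the paper simply notes the $\delta_i$ are morphisms (by the very definition of $T$), invokes the group-theoretic universal property of the amalgam to get $\Psi:\vG_\vD \to B$, and checks $\Psi^*(S) \subset T$ via $\delta_i^*\Psi^*(s) = \psi_i^*(s) = f_{c_i}$, exactly as you do. Where you genuinely diverge is your middle paragraph: the paper's proof never addresses the non-emptiness of $T$, even though objects of $\BigC$ are by definition pairs $(G,S)$ with $S \subset \Gamma^2(G,\ZZ)$ \emph{non-empty}, so that $(\vG_\vD,T)$ being an object at all requires an argument. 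The paper tacitly assumes this (Lemma \ref{BigL-colimit} opens with ``Let $f \in T$,'' and the proof of Theorem \ref{main-coro} indexes the colimit data by a set $\vA$ corresponding to $T$), and one cannot extract non-emptiness from Lemma \ref{BigL-colimit} without circularity, since that lemma presupposes $f \in T$. Your construction---showing $\langle z \rangle$ is central infinite cyclic in $\vG_{\wt\vD}$, identifying $\vG_{\wt\vD}/\langle z \rangle \cong \vG_\vD$ by a presentation argument, and choosing a section agreeing with the minimal-representative sections $g \mapsto (0,g)$ on the factors so that $\delta_i^* f = f_{c_i}$ holds on the nose---is essentially Lemma \ref{BigL-colimit} run in the opposite direction (the paper passes from $f \in T$ to the isomorphism $\Theta: \vG_{\wt\vD} \to (\wt\vG_\vD)_f$; you pass from the amalgam of lifts to a cocycle), and it buys something real: it closes the gap, works for arbitrary index sets $I$, and avoids any Mayer--Vietoris-style argument, which would in any case only control restrictions up to coboundary rather than exactly. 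Two minor points: the well-definedness of your section uses the normal form theorem for amalgams (namely $\delta_i(G_i) \cap \delta_j(G_j)$ equals the image of $H$ for $i \neq j$, and each $G_i$ embeds in $\vG_\vD$), which you should cite explicitly; and your identity $(\delta_i\phi_i)^*f = d$ should read $f_d$, though this matches the paper's own abuse of writing $c_i$ for $f_{c_i}$ in the definition of $T$.
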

\begin{proof}
The maps $\delta_i:(G_i,\{c_i\}) \to (\vG_{\vD},T)$ are clearly morphisms in $\BigC$. Let $(B,S)$ be an object in $\BigC$ with morphisms $\psi_i:(G_i,\{c_i\}) \rightarrow (B, S)$ such that $\psi_i\phi_i = \psi_j \phi_j$ for all $i,j \in I$.  Let $\Psi:\vG_{\vD} \to B$ be the unique homomorphism arising from the universal property of the free product with amalgamation, so $\Psi\delta_i = \psi_i$ for all $i$.  It suffices to show $\Psi^*(S) \subset T$.  Let $s \in S$.  Then for each $i \in I$, $\delta_i^*\Psi^*(s) = \psi^*_i(s) = c_i$ so $\Psi^*(s) \in T$, completing the proof.
\end{proof}

The next lemma identifies the colimit, up to isomorphism, of the amalgamation diagram $\wt \vD$ in $\BigL$ obtained by applying the functor $\fL$ to $\vD$.

\begin{lemma} \label{BigL-colimit}
Let $f \in T$.  There is a group isomorphism $\Theta:\vG_{\wt \vD} \to(\wt \vG_{\vD})_f$ such that for each $i \in I$, 
\[
\Theta^{-1}\wt \delta_i: \wt{G_i} \to \vG_{\wt \vD}
\]
is the inclusion homomorphism.
\end{lemma}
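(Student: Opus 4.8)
The plan is to build $\Theta$ from the universal property of the amalgamated free product $\vG_{\wt\vD}$ and then verify it is an isomorphism via the short five lemma. First I would fix $f \in T$ and, for each $i \in I$, define $\wt\delta_i : \wt{G_i} \to (\wt\vG_{\vD})_f$ by $\wt\delta_i(n,a) = (n,\delta_i(a))$. Because $\delta_i^* f = c_i$ (the defining property of $T$) and $\delta_i$ is a homomorphism, a direct check shows each $\wt\delta_i$ is a homomorphism: the product $(n,a)(m,b) = (n+m+c_i(a,b),ab)$ in $\wt{G_i}$ is carried to $(n+m+f(\delta_i(a),\delta_i(b)),\delta_i(ab))$ in $(\wt\vG_{\vD})_f$. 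Moreover the $\wt\delta_i$ agree after precomposition with the amalgamating maps, since $\wt\delta_i\wt\phi_i(n,h) = (n,\delta_i\phi_i(h))$ and $\delta_i\phi_i = \delta_j\phi_j$ in $\vG_{\vD}$. The universal property then yields a unique homomorphism $\Theta : \vG_{\wt\vD} \to (\wt\vG_{\vD})_f$ with $\Theta \circ \iota_i = \wt\delta_i$, where $\iota_i : \wt{G_i} \to \vG_{\wt\vD}$ is the inclusion; this is precisely the relation $\Theta^{-1}\wt\delta_i = \iota_i$ demanded by the statement.

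To organize the isomorphism argument I would exhibit $\vG_{\wt\vD}$ as a central extension of $\vG_{\vD}$ by $\ZZ$. Write $z$ for the common image of the central generators $z_{c_i}$, all identified under the amalgamation since $\wt\phi_i(z_d) = z_{c_i}$. The element $z$ is central, being central in every factor $\wt{G_i}$ while the factors generate, and it has infinite order because the factors of an amalgamated free product embed. A second use of the universal property, applied to the quotient homomorphisms $\wt{G_i} \to G_i \xrightarrow{\delta_i} \vG_{\vD}$ (which agree on $\wt H$), produces $q : \vG_{\wt\vD} \to \vG_{\vD}$. Constructing a one-sided inverse $\vG_{\vD} \to \vG_{\wt\vD}/\langle z \rangle$ by $\delta_i(a) \mapsto \iota_i(0,a)\langle z \rangle$ identifies $\ker q$ with exactly $\langle z \rangle$, so that $1 \to \langle z \rangle \to \vG_{\wt\vD} \xrightarrow{q} \vG_{\vD} \to 1$ is a central extension by $\ZZ$.

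With both $\vG_{\wt\vD}$ and $(\wt\vG_{\vD})_f$ presented as central extensions of $\vG_{\vD}$ by $\ZZ$, I would check that $\Theta$ is a morphism of extensions covering the identity: $\pi_f\Theta$ and $q$ agree on each generating factor $\wt{G_i}$ (both send $(n,a)$ to $\delta_i(a)$), hence are equal, while $\Theta$ restricts to $z^n \mapsto (n,id)$ on the central $\ZZ$. The short five lemma then forces $\Theta$ to be an isomorphism. Equivalently, surjectivity follows because the $\wt\delta_i(\wt{G_i})$ together with the central $\ZZ$ generate $(\wt\vG_{\vD})_f$, and injectivity from $\ker\Theta \subseteq \ker q = \langle z \rangle$ combined with injectivity of $\Theta$ on $\langle z \rangle$.

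I expect the main obstacle to be the identification $\ker q = \langle z \rangle$ (equivalently, that $z$ has infinite order and $\vG_{\wt\vD}/\langle z \rangle \cong \vG_{\vD}$): this is the one place where one must genuinely invoke the structure theory of amalgamated free products rather than formal nonsense, since a priori collapsing the central $\ZZ$ could interact with the amalgamation. Everything else is bookkeeping with the two universal properties and the cocycle identity $\delta_i^* f = c_i$. It is worth noting that the argument in fact makes $\Theta$ an equivalence of central extensions covering $id_{\vG_{\vD}}$, so it shows in passing that every cocycle in $T$ represents the single class of $\vG_{\wt\vD}$ in $H^2(\vG_{\vD};\ZZ)$ --- consistent with the philosophy that distinct elements of $T$ record distinct sections (orderings) of one central extension rather than distinct extensions.
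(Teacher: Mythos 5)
Your argument is correct, and it departs from the paper's proof exactly where you predicted it would have to: the identification $\ker q = \langle z \rangle$. The paper does not use the central-extension framing inside this lemma at all. After obtaining $\Theta$ from the universal property just as you do, and getting surjectivity from generators, it proves injectivity by a direct normal-form computation: it observes that right coset representatives of $G_i/\phi_i(H)$ lift to right coset representatives of $\wt G_i/\wt\phi_i(\wt H)$, writes an arbitrary element of $\vG_{\wt\vD}$ in normal form $w = (n,h)(0,g_1)\cdots(0,g_k)$, and computes $\Theta(w) = \bigl(n+\sum_{i=1}^k f(hg_1\cdots g_{i-1},g_i),\ hg_1\cdots g_k\bigr)$, so that $\Theta(w)=id$ forces $h = g_1 = \cdots = g_k = id$ by uniqueness of normal forms in $\vG_{\vD}$, and then $n = 0$. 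Your route replaces this computation with the retraction $r:\vG_{\vD} \to \vG_{\wt\vD}/\langle z \rangle$, $\delta_i(a) \mapsto \iota_i(0,a)\langle z \rangle$ --- well-defined because the cocycle discrepancy $\iota_i(0,a)\iota_i(0,b) = z^{f_{c_i}(a,b)}\iota_i(0,ab)$ dies in the quotient and the maps agree on the amalgamated copy of $H$ --- which gives $\ker q \subseteq \langle z \rangle$, and then the short five lemma finishes. Both proofs lean on the structure theory of amalgamated free products somewhere, but yours localizes it to the single soft fact that the factors embed (needed so $z$ has infinite order), and even that is dispensable: once $\Theta$ exists, $\Theta(z^n) = (n,id)$ already shows $z^n \neq id$ for $n \neq 0$. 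What the paper's computation buys is the explicit formula for $\Theta$ on normal forms, which is reused verbatim in the proof of Theorem \ref{main-coro} to write down the sections $s_\alpha$ of the colimit object; since your $\Theta$ is the same map (it agrees with the $\wt\delta_i$ on the factors, by uniqueness in the universal property), nothing downstream is lost. Your closing observation that every $f \in T$ represents a single class in $H^2(\vG_{\vD};\ZZ)$ is exactly Remark \ref{core-remarks}(3), which the paper records as a consequence of the colimit description rather than deriving inside the lemma --- your proof makes it immediate. (One cosmetic point: writing the multiplication in $\wt G_i$ as $(n,a)(m,b) = (n+m+c_i(a,b),ab)$ follows the paper's own abuse of notation in this section, where $c_i$ stands for the $2$-cocycle $f_{c_i}$; no harm, but worth being aware of.)
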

\begin{proof}
Consider the maps $\wt \delta_i: \wt G_i \to (\wt{\vG}_{\vD})_f$ arising from lifts of the canonical inclusions.  Note that
\[\wt{\delta}_i \wt \phi_i = \wt{\delta_i \phi_i} = \wt{\delta_j \phi_j} = \wt{\delta}_j \wt{\phi}_j\]
for all $i, j \in I$, so the universal property of the free product with amalgamation $\vG_{\wt{\vD}}$ yields a map \linebreak $\Theta:\vG_{\wt \vD} \to (\wt \vG_{\vD})_f$.  Since $(\wt \vG_{\vD})_f$ is generated by $(1,0)$ along with $\{(0,g) \mid g \in G_i \text{ for some } i \in I\}$, $\Theta$ is surjective.  For injectivity, observe that $S \subset G_i$ is a set of right coset representatives of $G_i /\phi_i(H)$ if and only if $\{(0,s) \mid s \in S\}$ is a set of right coset representatives of $\wt G_i/\wt \phi_i(\wt H)$.  

Now suppose $w = (n,h)(0,g_1)\cdots(0,g_k)$ is the normal form of an element in $\vG_{\wt{\vD}}$ that satisfies $\Theta(w) = id$.  Then using the fact that $\Theta$ acts as $\wt{\delta}_i$ on each factor in the free product, we compute
\[
\Theta(w) = \left(n + \sum_{i=1}^k f(hg_1\cdots g_{i-1},g_i),hg_1\cdots g_k\right).
\]
The right hand side is the identity if and only if it is equal to $(0, id)$ (as an element of $\mathbb{Z} \times \vG_{\vD}$).  Since $hg_1\cdots g_k$ is the normal form of an element in $\vG_{\vD}$, we must have $h = g_1 = \cdots = g_k = id$.  Therefore $\sum_{i=1}^k f(hg_1\cdots g_{i-1},g_i) = 0$, so $n = 0$ and $\Theta$ is an isomorphism.  The fact that $\Theta^{-1}\wt\delta_i:\wt G_i \to \vG_{\wt \vD}$ is the inclusion homomorphism for each $i $ follows immediately from the construction of $\Theta$.
\end{proof}

We now have the machinery in place to prove our main result.  In what follows below, when $H$ is a subgroup of a circularly-ordered group $(G,c)$ we will write $(H,c)$ to denote the subgroup $H$ equipped with the restriction ordering arising from $c$.

\begin{textmain}
\label{main-coro}
Suppose $(G_i,c_i)$ are circularly-ordered groups for $i \in I$, each equipped with a subgroup $H_i \subset G_i$ and an order-preserving isomorphism $\phi_i:(H,d) \to (H_i,c_i)$ from a circularly-ordered group $(H,d)$.  The following are equivalent:
\begin{enumerate}
\item The group $*_{i \in I} G_i  (H_i \stackrel{\phi_i}{\cong} H)$ admits a circular ordering $c$ which extends the orderings $c_i$ of $G_i$ for $i \in I$.
\item The group $*_{i \in I} \widetilde G_i (\widetilde H_i \stackrel{\widetilde \phi_i}{\cong}\widetilde{H})$ admits a left ordering $<$ which extends each of the left orderings $<_{c_i}$ of $\widetilde G_i$ for $i \in I$.
\end{enumerate}
\end{textmain}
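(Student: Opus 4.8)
The plan is to prove the two implications separately, in each case transporting an ordering across one of the group isomorphisms furnished by the colimit machinery of Section~\ref{amalgamation}, and then checking compatibility factor by factor. Throughout I write $\vG_\vD = *_{i\in I}G_i(H_i\stackrel{\phi_i}{\cong}H)$ and $\vG_{\wt\vD} = *_{i\in I}\wt G_i(\wt H_i\stackrel{\wt\phi_i}{\cong}\wt H)$, let $\delta_i,\wt\delta_i$ denote the canonical inclusions of the factors, and let $z$ denote the common central element obtained by identifying the elements $z_{c_i}$ under the amalgamating isomorphisms.

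For $(1)\Rightarrow(2)$, suppose $c$ is a circular ordering on $\vG_\vD$ extending each $c_i$. Then the associated cocycle satisfies $\delta_i^*f_c = f_{\delta_i^*c} = f_{c_i}$, so $f_c\in T$ (recall $T\subset\Gamma^2(\vG_\vD,\ZZ)$ consists of cocycles restricting to $f_{c_i}$ on each factor). The lift of $(\vG_\vD,c)$ given by Construction~\ref{lift} has underlying group $(\wt\vG_\vD)_{f_c}$ and a canonical left ordering $<_c$, so Lemma~\ref{BigL-colimit} provides a group isomorphism $\Theta:\vG_{\wt\vD}\to(\wt\vG_\vD)_{f_c}$ with $\Theta^{-1}\wt\delta_i$ equal to the inclusion of $\wt G_i$. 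I would pull $<_c$ back along $\Theta$ to a left ordering $<$ of $\vG_{\wt\vD}$ and check it extends each $<_{c_i}$: since $\wt\delta_i(n,a)=(n,\delta_i(a))$ and $<_c$ has positive cone $\{(n,w):n\ge 0\}\sm\{(0,id)\}$, injectivity of $\delta_i$ forces the restriction of $<$ to $\wt G_i$ to have positive cone $\{(n,a):n\ge 0\}\sm\{(0,id)\}$, which is exactly the cone of $<_{c_i}$.

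For $(2)\Rightarrow(1)$, suppose $<$ is a left ordering of $\vG_{\wt\vD}$ extending each $<_{c_i}$. The first task is to show $z$ is a positive, cofinal, central element. Centrality is immediate, as $z$ is central in each $\wt G_i$ and the factors generate $\vG_{\wt\vD}$, and positivity holds because $z>_{c_i}id$ while $<$ extends $<_{c_i}$. Cofinality is the crux: every factor element is $z$-cofinal in its $\wt G_i$ and hence in $\vG_{\wt\vD}$, and because $z$ is central and positive, $z^{-k}<a<z^k$ and $z^{-l}<b<z^l$ imply $z^{-(k+l)}<ab<z^{k+l}$ (left-multiply the bounds on $b$ by $a$ and use $a<z^k$ together with centrality); since every element of $\vG_{\wt\vD}$ is a finite product of factor elements, all of $\vG_{\wt\vD}$ is $z$-cofinal. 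Applying Construction~\ref{quotient} to $(\vG_{\wt\vD},<,z)$ then yields a circular ordering $c_<$ on $\vG_{\wt\vD}/\langle z\rangle$.

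Finally I would transport $c_<$ across the group isomorphism $\psi:\vG_{\wt\vD}/\langle z\rangle\to\vG_\vD$ of Remark~\ref{sketch-of-proof}: gluing the isomorphisms $\eta_{G_i}:\wt G_i/\langle z\rangle\to G_i$ of Lemma~\ref{key-equivalence} via the universal property gives a surjection $\vG_{\wt\vD}\to\vG_\vD$ killing $z$, and since $\langle z\rangle$ is central and contained in the amalgamated subgroup, the quotient of the amalgam is the amalgam of the quotients (alternatively, a normal-form argument as in the proof of Lemma~\ref{BigL-colimit}), so the induced map $\psi$ is an isomorphism restricting to $\eta_{G_i}$ on each $\wt G_i/\langle z\rangle$. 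Setting $c=\psi_*c_<$, its restriction to $G_i$ equals $(\eta_{G_i})_*\bigl(c_<|_{\wt G_i/\langle z\rangle}\bigr)$; because $z\in\wt G_i$ the minimal representatives of cosets from $\wt G_i/\langle z\rangle$ lie in $\wt G_i$, whence $c_<|_{\wt G_i/\langle z\rangle}=c_{<_{c_i}}$, and Lemma~\ref{key-equivalence} gives $(\eta_{G_i})_*c_{<_{c_i}}=c_i$. Thus $c$ extends each $c_i$. The main obstacle is the cofinality verification in $(2)\Rightarrow(1)$ flagged in Remark~\ref{sketch-of-proof}, as everything else is transport of structure across the isomorphisms supplied by the colimit lemmas together with routine factorwise bookkeeping.
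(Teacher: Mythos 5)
Your proof is correct. The forward direction coincides with the paper's own argument: both observe $\delta_i^*f_c = f_{c_i}$ so that $f_c \in T$, and both use the isomorphism $\Theta$ of Lemma \ref{BigL-colimit} to transport the standard lift ordering of $(\wt{\vG}_\vD)_{f_c}$ to $\vG_{\wt\vD}$ (the paper packages the factorwise check as Lemma \ref{useful-inclusion-properties}(3) applied to the inclusions $\sigma_i$, but the content is the same cone computation you perform). For $(2)\Rightarrow(1)$ you genuinely diverge: the paper stays inside the categorical framework, asserting that the given ordering $<$ ``corresponds to'' one of the sections $s_\beta$ in the colimit of $\wt\vD$ and then applying $\fQ$ to land on a cocycle $f_\beta \in T$, whereas you carry out the ``narrow view'' that the paper explicitly sets aside in Remark \ref{sketch-of-proof}: verify directly that $z$ is positive, central and $<$-cofinal, apply Construction \ref{quotient}, and transport $c_<$ across the isomorphism $\psi$ glued from the $\eta_{G_i}$ of Lemma \ref{key-equivalence}, using that quotienting by the central subgroup $\langle z\rangle$ contained in the amalgamated part commutes with amalgamation (your universal-property justification of this is sound, as is the minimal-representative argument showing $c_<$ restricts to $c_{<_{c_i}}$ on $\wt G_i/\langle z\rangle$). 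Your route is more elementary and has the virtue of making explicit two steps the paper's one-line ``corresponds to the left order $<$'' absorbs silently: the cofinality of $z$, which is precisely what makes the minimal-representative section---and hence the sectioned central extension $\fI_L(\vG_{\wt\vD},<,z)$---well defined, and the identification of $\vG_{\wt\vD}/\langle z\rangle$ with $\vG_\vD$. What the paper's framing buys in exchange is the bookkeeping recorded in Remark \ref{core-remarks}: since every extending left ordering is matched against a section of one fixed colimit object, the bijection between extending circular orderings of $\vG_\vD$ and extending left orderings of $\vG_{\wt\vD}$, and the constancy of the class $[f_c] \in H^2(\vG_\vD;\ZZ)$ over all such extensions, fall out for free, whereas your transport argument produces these correspondences one ordering at a time without packaging them.
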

\begin{proof}
Let $\vD = ((H,\{f_d\}), \{(\phi_i,(G_i,\{f_{c_i}\}))\}_{i \in I})$ be the amalgamation diagram in $\BigC$, and let $\wt \vD$ be the amalgamation diagram in $\BigL$ obtained by applying the functor $\fL$ to $\vD$.  Since $\vL$ is an equivalence of categories by Lemma \ref{big-categorical-equivalence} and $\vD$ has a colimit by Lemma \ref{BigC-colimit}, the diagram $\wt \vD$ has a colimit.  By Lemma \ref{BigC-colimit}, the colimit of $\vD$ is the object $(\vG_\vD,\{f_\alpha\}_{\alpha \in \vA})$ along with the inclusion homomorphisms $\delta_i:G_i \to \vG_\vD$.  By Lemma \ref{BigL-colimit}, the colimit of $\wt \vD$ is given by the object $\{(\vG_{\wt\vD}, \vG_\vD, \iota,\pi,s_\alpha)\}_{\alpha \in \vA}$ where $\iota(1) = (1,id)$,
\[
\pi((n,h)(0,g_1)\cdots (0,g_k)) = hg_1\cdots g_k,
\]
and $s_\alpha(hg_1\cdots g_k) = (-\sum_{i=1}^k f_\alpha(hg_1\cdots g_{i-1},g_i),h)(0,g_1)\cdots(0, g_k)$.  The morphisms
\[
\{\varphi_{i,\alpha}\}_{\alpha \in \vA}:\fL(G_i,\{f_{c_i}\}) \to \{(\vG_{\wt\vD}, \vG_\vD, \iota,\pi,s_\alpha)\}_{\alpha \in \vA}
\]
are given by $\varphi_{i,\alpha} = \sigma_i$ for all $\alpha \in \vA$, where $\sigma_i:(\wt G_i)_{f_{c_i}} \to \vG_{\wt\vD}$ is the inclusion homomorphism.

Suppose (1) holds.  Then there is some $\alpha \in \vA$ such that $f_\alpha = f_c$.  By Lemma \ref{induced-equivalence}, 
\[
\fL(\vG_\vD,\{f_c\}) = \{(\vG_{\wt\vD},\vG_\vD,\iota,\pi,s_c)\}\]
gives rise to a left ordering $<$ on $\vG_{\wt\vD}$.  Since the morphisms $\{\sigma_i \}:\fL(G_i,\{f_{c_i}\}) \to \{(\vG_{\wt\vD},\vG_\vD,\iota,\pi,s_c)\}$ are such that $\sigma_i$ is injective, $\sigma_i$ is an order preserving homomorphism by Lemma \ref{useful-inclusion-properties} (3).  Since the $\sigma_i$ are the inclusion homomorphisms, $<$ extends each of the $<_{c_i}$.  Similarly, if $(2)$ holds, then there is some $\beta\in \vA$ such that $\{(\vG_{\wt\vD},\vG_\vD,\iota,\pi,s_\beta)\}$ corresponds to the left order $<$ on $\vG_{\wt \vD}$.  Then $\fQ\{(\vG_{\wt\vD},\vG_\vD,\iota,\pi,s_\beta)\} = (\vG_\vD,\{f_\beta\})$ and $c = c_{<_{f_\beta}}$ is the desired circular ordering on $\vG_\vD$ extending each of the $c_i$ on $G_i$.
\end{proof}

\begin{remark}
\label{core-remarks}
We close out the section with the following remarks about Theorem \ref{main-coro}.
\begin{enumerate}
\item Defining $\BigC$ (and $\BigL$) as we have done gives rise to colimits of amalgamation diagrams in $\Circ$ (and $\LOs$) that contain (as part of their defining data) {\it all} circular orderings (and left orderings) on free products with amalgamation that extend the orderings on the factor groups.
\item The proof provides a bijection between circular orderings $c$ on $\vG_\vD$ extending each of the $c_i$ on $G_i$ and left orderings $<$ on $\vG_{\wt\vD}$ extending each of the $<_{c_i}$ on $\wt G_i$. 
\item The fact that all the sectioned central extensions in the colimit of $\wt D$ have the same underlying central extension implies that all circular orderings $c$ extending each of the $c_i$ give the same cohomology class $[f_c] \in H^2(\vG_\vD;\ZZ)$.
\end{enumerate}
\end{remark}

\section{Lifting, quotients and compatible normal families of orderings}
\label{compatible-section}

The goal of this section is to explore necessary and sufficient conditions that $*_{i \in I} G_i  (H_i \stackrel{\phi_i}{\cong} H)$ admits a circular ordering $c$ which extends the orderings $c_i$ of $G_i$ for each $i$, which can be stated in terms of circular orderings (or families of circular orderings) of the groups $G_i$.   First we recall such conditions in the case of left-orderable groups, and generalize the terminology used there to the case of circular orderings.

Denote the collection of all left orderings of a group $G$ by $\mathrm{LO}(G)$, and the collection of all circular orderings of $G$ by $\mathrm{CO}(G)$.  Appropriately topologized, each becomes a compact Hausdorff space \cite{BS15}\cite{Sikora04}.  These spaces also each come equipped with a $G$-action by homeomorphisms, defined as follows.  Recall that if $(G, <)$ is a left-ordered group, for each $h \in G$ there is a left ordering $<^h$ defined by $g_1 <^h g_2$ if and only if $g_1h<g_2h$.  If the positive cone of the ordering $<$ is $P$, then the positive cone of $<^h$ is $h^{-1}Ph$.  Similarly if $(G,c)$ is a circularly-ordered group, then for each $h \in G$ there is a circular ordering $c^h$ defined by $c^h(g_1, g_2, g_3) = c(g_1h, g_2h, g_3h)$ \cite{BS15}. Note that since circular orderings and left orderings are left-invariant, $<^h$ and $c^h$ are simply the orders obtained by pulling back $<$ and $c$ by the automorphism of $G$ given by conjugation by $h$. A \textit{normal family} of left orderings (resp. circular orderings) of a group $G$ is a set $\mathcal{N} \subset \mathrm{LO}(G)$ (resp. $\mathcal{N} \subset \mathrm{CO}(G)$) that is invariant under the $G$-action.  

Let $\{(G_i,<_i)\}_{i \in I}$ be a collection of left-ordered groups with positive cone $P_i \subset G_i$, and let $H$ be a group.  For each $i$, let $\phi_i:H \to G_i$ be an injective homomorphism with image $H_i \subset G_i$. We say the collection $\{<_i\}_{i \in I}$ is {\it compatible with $\{\phi_i\}_{i \in I}$} if for all $i,j \in I$, $\phi_i^*P_i = \phi_j^*P_j$, or equivalently, if $<_i^{\phi_i}$ and $<_j^{\phi_j}$ are the same left order on $H$. Given sets $S_i \subset \mathrm{LO}(G_i)$, we say $\{S_i\}_{i \in I}$ is {\it compatible with $\{\phi_i\}_{i \in I}$} if for all $i,j \in I$, $\phi_i^*S_i = \phi_j^*S_j$.


We have the following theorem from Bludov and Glass, which is Theorem A in \cite{BV09}.

\begin{theorem}
\label{bludov-glass}
Suppose that $\{(G_i, <_i)\}_{i \in I}$ are left-ordered groups, $H$ is a group and for each $i$, $\phi_i:H \to G_i$ is an injective homomorphism with image $H_i \subset G_i$.  The group $*_{i \in I} G_i  (H_i \stackrel{\phi_i}{\cong} H)$ is left-orderable (via an ordering extending each of the $<_i$) if and only if the collection $\{ <_i \}_{i \in I}$ is compatible with $\{ \phi_i \}_{i \in I}$ and there exist normal families $\mathcal{N}_i \subset \mathrm{LO}(G_i)$ with $<_i \in \mathcal{N}_i$ for all $i$ such that $\{ \mathcal{N}_i \}_{i \in I}$ is compatible with $\{ \phi_i \}_{i \in I}$.
\end{theorem}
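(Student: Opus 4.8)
The plan is to prove the two implications separately, with necessity being a routine matter of restricting the hypothesized global ordering together with all of its conjugates, and sufficiency being the substantial half. Throughout write $G = *_{i \in I} G_i(H_i \stackrel{\phi_i}{\cong} H)$, and note that because of the amalgamation the composite embeddings $H \cong H_i \hookrightarrow G_i \hookrightarrow G$ all agree; call this common embedding $\iota:H \to G$. This single observation is what makes all of the compatibility statements collapse to facts about $\iota^*$.

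For necessity, suppose $<$ is a left ordering of $G$ with positive cone $P$ restricting to $<_i$ on each $G_i$, i.e. $P \cap G_i = P_i$. Compatibility of $\{<_i\}$ is immediate, since $\phi_i^*P_i = \phi_i^*(P \cap G_i) = \iota^*P$ is independent of $i$. For the families, set $\mathcal{N}_i = \{(<^g)|_{G_i} : g \in G\}$, the restrictions to $G_i$ of all conjugates of $<$ (each such restriction is again a left ordering of $G_i$). Then $<_i \in \mathcal{N}_i$ on taking $g = \id$, and $\mathcal{N}_i$ is normal because for $h \in G_i$ one computes $\big((<^g)|_{G_i}\big)^h = (<^{hg})|_{G_i} \in \mathcal{N}_i$, using that conjugating a restriction by an element of the subgroup equals restricting the conjugate. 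Finally $\phi_i^*\mathcal{N}_i = \{\iota^*(<^g) : g \in G\}$ is manifestly independent of $i$, so $\{\mathcal{N}_i\}$ is compatible. This settles the forward direction.

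For sufficiency I would construct a positive cone $P \subseteq G \setminus \{\id\}$ with $P \cdot P \subseteq P$, $P \sqcup P^{-1} = G \setminus \{\id\}$, and $P \cap G_i = P_i$ for each $i$. The first move is to reduce to a finite problem: since $\mathrm{LO}(G)$ is compact, the existence of such a $P$ is equivalent to a finite consistency condition, namely that for every finite set $w_1,\ldots,w_m \in G$ there are signs $\varepsilon_j \in \{\pm 1\}$ (forced to agree with $P_i$ whenever $w_j \in G_i$) for which $\id$ does not lie in the sub-semigroup generated by $\bigcup_i P_i \cup \{w_j^{\varepsilon_j}\}$; a standard Zorn's lemma argument then extends any globally consistent sign assignment to a full positive cone restricting correctly to each factor. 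The heart of the matter is producing the signs, and here I would induct on syllable length in the amalgamated normal form: each $w_j$ is signed by exposing its final syllable, say lying in $G_i$, testing it against a member of $\mathcal{N}_i$ selected according to the preceding syllables, and then transporting the decision down to shorter words.

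The main obstacle is exactly this inductive step, i.e. verifying multiplicative closure once signs have been assigned, and it is here that both hypotheses are indispensable. \emph{Normality} of each $\mathcal{N}_i$ is what lets the chosen ordering on a factor be readjusted as syllables are absorbed or conjugated past, so that comparisons survive the left-multiplications that appear when two reduced words are concatenated; \emph{compatibility} of $\{\mathcal{N}_i\}$ on $H$ is what guarantees that when a syllable is rewritten across the amalgamated subgroup---passing from $G_i$ through $H$ into $G_j$---both factors assign it the same sign, so no contradiction can surface at the amalgamation. Bookkeeping the cancellations produced by concatenation, and checking that every resulting reduced word still receives a coherent sign, is the genuinely delicate part; by contrast the positive-cone axioms, the restriction property $P \cap G_i = P_i$, and the compactness reduction are all formal.
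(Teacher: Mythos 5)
Your necessity argument is correct, and it is the standard one: restricting all conjugates $<^g$, $g \in G$, to each factor $G_i$ yields normal families containing $<_i$, and every compatibility statement collapses to a fact about pulling back along the common embedding $\iota : H \to G$, exactly as you say. (One harmless convention slip: with the paper's definition of the conjugation action, for $h \in G_i$ one has $\bigl((<^g)|_{G_i}\bigr)^h = (<^{gh})|_{G_i}$ rather than $(<^{hg})|_{G_i}$, but since $g$ ranges over all of $G$ the closure of $\mathcal{N}_i$ is unaffected.) Note, however, that the paper does not prove this statement at all: it is imported verbatim as Theorem A of Bludov and Glass \cite{BV09}, so at the level of this paper the ``proof'' is the citation.

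The sufficiency half, which you correctly identify as the substantial one, is where your proposal has a genuine gap: it is a plan, not a proof. The compactness/Zorn reduction to the finite criterion --- for every finite $w_1,\dots,w_m$ there exist signs $\varepsilon_j$ with $id$ outside the subsemigroup generated by $\bigcup_i P_i \cup \{w_j^{\varepsilon_j}\}$ --- is a standard and correct preliminary (and, as you note, $P \supseteq \bigcup_i P_i$ automatically forces $P \cap G_i = P_i$ since each $P_i$ is already a total cone on $G_i$). But everything after that is unexecuted. The semigroup in question contains arbitrary alternating products of the signed words with positive elements of the factors; when such products are concatenated, cancellation in the amalgamated normal form can annihilate the final syllable of a word entirely and push the comparison through $H$ into a different factor, so a sign rule keyed to the last syllable and to ``a member of $\mathcal{N}_i$ selected according to the preceding syllables'' is not specified well enough even to test: you never say how the selection is made, why it is consistent when the same element arises as a subproduct of two different concatenations, or why compatibility of $\{\mathcal{N}_i\}$ on $H$ suffices to resolve rewritings across the edge group inside long products. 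Filling this in is not bookkeeping --- it is essentially the entire content of Theorem A of \cite{BV09}, whose proof is long and requires, beyond normality and compatibility, a careful mechanism for choosing orders from the normal families with stability properties under the conjugations produced by prefixes. As written, your sketch identifies where the hypotheses should enter without demonstrating that the construction closes up, so the hard direction remains unproved.
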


We define compatibility of circular orderings similarly. Suppose we have a collection of circularly-ordered groups $\{(G_i,c_i)\}_{i \in I}$, a group $H$, and for each $i$ an injective homomorphism $\phi_i:H \to G_i$ with image $H_i \subset G_i$. We say the collection $\{c_i\}_{i \in I}$ is {\it compatible with $\{\phi_i\}_{i \in I}$} if for all $i,j \in I$, $\phi_i^*c_i = \phi_j^*c_j$. Given sets $S_i \subset \mathrm{CO}(G_i)$, we say $\{S_i\}_{i \in I}$ is {\it compatible with $\{\phi_i\}_{i \in I}$} if for all $i,j \in I$, $\phi_i^*S_i = \phi_j^* S_j$.



Our goal is to prove an analogous theorem to Theorem \ref{bludov-glass} for circularly-orderable groups.  Recall the lifting and quotienting constructions from Section \ref{background}.

\begin{lemma}\label{normal-descends}
Let $(G,<,z)$ be an object of $\LOs$, and let $g \in G$. Then $c_{<^g} = c_<^{g \langle z \rangle}$ as circular orderings on $\ol G$.  
\end{lemma}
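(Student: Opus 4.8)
The plan is to prove the identity by recognizing the conjugated order $<^g$ as a pullback of $<$ along an automorphism that fixes $z$, and then invoking the functoriality of the quotient construction $Q$ from Lemma \ref{functors}. This reduces the claimed equality of two circular orderings on $\ol G$ to a single application of the left-invariance of $c_<$, avoiding any direct manipulation of minimal representatives.

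First I would observe that $<^g$ is the pullback of $<$ along the conjugation automorphism $\beta_g:G \to G$, $\beta_g(x)=g^{-1}xg$. Indeed, from the definition $g_1 <^g g_2 \Leftrightarrow g_1 g < g_2 g$ and the left-invariance of $<$ we get $g_1 <^g g_2 \Leftrightarrow g^{-1}g_1 g < g^{-1}g_2 g \Leftrightarrow \beta_g(g_1) < \beta_g(g_2)$. Since $z$ is central, $\beta_g(z)=z$, so $\beta_g$ is an order-isomorphism $(G,<^g) \to (G,<)$ fixing $z$; in particular $z$ remains positive and $<^g$-cofinal, so that $(G,<^g,z)$ is again an object of $\LOs$ (and hence $c_{<^g}$ is defined via Construction \ref{quotient}), and $\beta_g:(G,<^g,z) \to (G,<,z)$ is a morphism in $\LOs$.

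Next I would apply $Q$. By Lemma \ref{functors}, $Q(\beta_g)=\ol{\beta_g}:(\ol G,c_{<^g}) \to (\ol G,c_<)$ is an order-preserving map in $\Circ$, where $\ol{\beta_g}(a\langle z\rangle)=g^{-1}ag\langle z\rangle$. Unwinding the definition of order-preserving in $\Circ$ gives, for any triple of cosets,
\[
c_{<^g}(a_1\langle z\rangle, a_2\langle z\rangle, a_3\langle z\rangle) = c_<\big(g^{-1}a_1 g\langle z\rangle,\; g^{-1}a_2 g\langle z\rangle,\; g^{-1}a_3 g\langle z\rangle\big).
\]
Left-multiplying all three entries by $g\langle z\rangle$ and using the left-invariance of $c_<$ turns the right-hand side into $c_<(a_1 g\langle z\rangle, a_2 g\langle z\rangle, a_3 g\langle z\rangle)$, which is precisely $c_<^{g\langle z\rangle}(a_1\langle z\rangle, a_2\langle z\rangle, a_3\langle z\rangle)$ by the definition of the action $c^h(x,y,z)=c(xh,yh,zh)$. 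Hence $c_{<^g}=c_<^{g\langle z\rangle}$.

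There is no serious obstacle here; once the conceptual framing is fixed the argument is essentially bookkeeping. The only points requiring genuine care are (i) pinning down the conjugation convention so that $<^g$, as defined in the text, really is the pullback along $x\mapsto g^{-1}xg$ and not its inverse, and (ii) confirming that $z$ stays positive and cofinal for $<^g$ so that $c_{<^g}$ is defined at all---both of which follow at once from the centrality of $z$ and the fact that $\beta_g$ is an order-isomorphism. A fully direct alternative, comparing the two orderings triple by triple via minimal representatives, is also available but messier: the $<^g$-minimal representatives lie in the shifted fundamental domain $\{x\mid g\le x<gz\}$ rather than $\{x\mid id\le x<z\}$, and one must argue separately that the induced cyclic order is insensitive to this shift, which is exactly the content that the functoriality of $Q$ packages automatically.
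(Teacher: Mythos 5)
Your proof is correct, and it takes a genuinely different route from the paper's. The paper argues directly with minimal representatives: writing $\widehat{x}$ for the $<^g$-minimal representative of $x\langle z \rangle$, it observes that $id \leq^g \widehat{x} <^g z$ is equivalent to $id \leq g^{-1}\widehat{x}g < z$, hence $g^{-1}\widehat{x}g = \ol{g^{-1}xg}$, so the permutation sorting the $<^g$-minimal representatives of a triple of cosets coincides with the one sorting the $<$-minimal representatives of the conjugated cosets, and both sides of the lemma compute the sign of that same permutation. You instead note (as the paper itself remarks just after defining $<^h$ and $c^h$) that $<^g$ is the pullback of $<$ along the conjugation $\beta_g(x)=g^{-1}xg$, that $\beta_g$ fixes the central element $z$ and so is an isomorphism $(G,<^g,z)\to(G,<,z)$ in $\LOs$, and then combine functoriality of $Q$ (Lemma \ref{functors}) with left-invariance of $c_<$. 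This is legitimate and not circular, since Lemma \ref{functors} is established earlier and independently; your version is shorter, makes the lemma transparently an instance of pullback-compatibility of the quotient construction, and your careful checks (the conjugation convention, and that $z$ remains positive and cofinal for $<^g$ so that $c_{<^g}$ is defined) are exactly the right ones. The trade-off is that the paper omits the proof of Lemma \ref{functors} as a ``straightforward calculation,'' and the verification that $Q$ carries morphisms of $\LOs$ to morphisms of $\Circ$ is essentially the same minimal-representative bookkeeping the paper performs here explicitly --- so the two proofs contain the same underlying computation, just located differently. One small inaccuracy in your closing aside (immaterial, since it concerns the alternative you did not pursue): the $<^g$-minimal representatives form the set $\{x \mid g \leq xg < gz\}$, i.e.\ the translate $[g,gz)g^{-1}$, not $\{x \mid g \leq x < gz\}$; equivalently $\widehat{x} = g\,\ol{g^{-1}xg}\,g^{-1}$, which is precisely the identity driving the paper's direct proof.
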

\begin{proof}
Given $(g_1\langle z \rangle, g_2 \langle z \rangle, g_3 \langle z \rangle) \in \ol G^3 \sm \Delta(\ol G)$, note that 
$c_{<^g}(g_1\langle z \rangle, g_2 \langle z \rangle, g_3 \langle z \rangle) = sign(\sigma)$, where $\sigma$ is the unique permutation such that $\widehat{g_{\sigma(1)}} <^g \widehat{g_{\sigma(2)}} <^g \widehat{g_{\sigma(3)}}$.   Here, $\widehat{g_{\sigma(i)}}$ is the unique element of $g_{\sigma(i)}\langle z \rangle$ such that $id \leq^g \widehat{g_{\sigma(i)}} <^g z$, i.e. it is the minimal representative with respect to $<^g$.   It follows that $id \leq g^{-1} \widehat{g_{\sigma(i)}} g < z$ and thus $g^{-1} \widehat{g_{\sigma(i)}} g = \ol{g^{-1}g_{\sigma(i)}g}$, the minimal representative with respect to $<$.  Thus $\widehat{g_{\sigma(1)}} <^g\widehat{g_{\sigma(2)}} <^g \widehat{g_{\sigma(3)}}$, which is equivalent to $g^{-1} \widehat{g_{\sigma(1)}}g < g^{-1} \widehat{g_{\sigma(2)}}g < g^{-1}\widehat{g_{\sigma(3)}}g$, is equivalent to
\[
\ol{g^{-1}g_{\sigma(1)}g} < \ol{g^{-1}g_{\sigma(2)}g} < \ol{g^{-1} g_{\sigma(3)}g}.
\]

On the other hand,
\[
c^{g \langle z \rangle}_{<} (g_1\langle z \rangle, g_2 \langle z \rangle, g_3 \langle z \rangle)= c_{<} (g^{-1}g_1g\langle z \rangle, g^{-1}g_2 g\langle z \rangle, g^{-1}g_3g \langle z \rangle) =sign(\tau),
\]
where $\tau$ is the unique permutation such that $\ol{g^{-1}g_{\tau(1)}g} < \ol{g^{-1}g_{\tau(2)} g} < \ol{g^{-1}g_{\tau(3)}g}$.  Thus $\sigma = \tau$ and the lemma follows.
\end{proof}

\begin{lemma}\label{ccc}
Let $(G,c)$ be a circularly-ordered group, and let $g \in G$. Then $[f_{c^g}] = [f_c]$ in $H^2(G;\ZZ)$.
\end{lemma}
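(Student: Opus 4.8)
The plan is to reduce the statement to the classical fact that an inner automorphism acts trivially on group cohomology with trivial coefficients, and then to make this concrete using the central extension $\wt G_c$ that is already at hand.

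First I would record the identity $c^g = \alpha_g^* c$, where $\alpha_g\colon G \to G$ is the (injective) conjugation automorphism $x \mapsto g^{-1}xg$. This is exactly the observation made just before Theorem \ref{bludov-glass}, that $c^g$ is the pullback of $c$ along conjugation by $g$, and it follows from left-invariance of $c$ by multiplying all three arguments on the left by $g$. Applying the functoriality relation $f_{\phi^* c} = \phi^* f_c$ (valid for injective $\phi$, stated after Construction \ref{lift}) with $\phi = \alpha_g$ then gives $f_{c^g} = \alpha_g^* f_c$, that is, $f_{c^g}(a,b) = f_c(g^{-1}ag, g^{-1}bg)$. So it suffices to prove that the inner automorphism $\alpha_g$ fixes the class $[f_c]$, i.e. that $[\alpha_g^* f_c] = [f_c]$ in $H^2(G;\ZZ)$.

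Second, to prove $[\alpha_g^* f_c] = [f_c]$ I would pass to the central extension interpretation from Remark \ref{derived-cocycle}: under the bijection between $H^2(G;\ZZ)$ and equivalence classes of central extensions (\cite[Chapter IV.3]{Brown}), $[f_c]$ is the class of $\wt G_c$, and $[\alpha_g^* f_c]$ is the class of its pullback $\alpha_g^*\wt G_c$ along $\alpha_g$. I would then exhibit an explicit equivalence of these two central extensions of $G$. Choosing any lift $w$ of $g$, for instance $w = (0,g) \in \wt G_c$, conjugation $\Phi(e) = w^{-1}ew$ is a group automorphism of $\wt G_c$ that covers $\alpha_g$ on the quotient $G$ (since $\pi(w^{-1}ew) = g^{-1}\pi(e)g$) and fixes the central subgroup $\langle z_c\rangle$ pointwise (conjugation fixes central elements). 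The assignment $e \mapsto (\Phi(e),\pi(e))$ is then a homomorphism $\wt G_c \to \alpha_g^*\wt G_c$ inducing the identity on both $G$ and the kernel $\ZZ$, hence an equivalence of central extensions, forcing $[\alpha_g^* f_c] = [f_c]$. Combined with the first step this yields $[f_{c^g}] = [f_c]$.

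The argument carries no analytic content; the only point to get right is bookkeeping of conventions — checking that conjugation by the chosen lift covers $\alpha_g$ in the correct direction and fixes the fibre pointwise (both immediate, the latter because $\langle z_c\rangle$ is central). For a fully self-contained variant one could instead produce an explicit normalized $1$-cochain $\beta\colon G \to \ZZ$ with $\delta\beta = \alpha_g^* f_c - f_c$: the two sections $s_c$ and $\Phi\circ s_c\circ \alpha_g^{-1}$ of the single extension $\wt G_c$ differ by precisely such a coboundary, and reading $\beta$ off from their difference is a routine computation. Either way, the essential input is the classical vanishing of the conjugation action on $H^*(G;\ZZ)$ with trivial coefficients.
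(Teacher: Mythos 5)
Your proof is correct, but it takes a genuinely different route from the paper's. The paper stays inside the ordering machinery: it conjugates the left ordering $<_c$ on $\wt G_c$ by the lift $\tilde g = (0,g)$, invokes Lemma \ref{normal-descends} to show this descends to conjugation of the circular ordering on the quotient, observes that $f_{c_{<_c}}$ and $f_{c_{<_c^{\tilde g}}}$ arise from two different minimal-representative sections of the \emph{same} central extension $\wt G_c \to \wt G_c/\langle z_c\rangle$ (hence are cohomologous), and then verifies by an explicit cocycle computation that $\nu^* f_{c_{<_c}^{\tilde g \langle z_c\rangle}} = f_{c^g}$. You instead short-circuit all of this: the identity $c^g = \alpha_g^* c$ together with the paper's relation $f_{\phi^* c} = \phi^* f_c$ gives $f_{c^g} = \alpha_g^* f_c$ \emph{on the nose}, reducing the lemma to the classical fact that inner automorphisms act trivially on $H^2(G;\ZZ)$, which you verify concretely via the equivalence $e \mapsto (w^{-1}ew, \pi(e))$ between $\wt G_c$ and the pullback extension $\alpha_g^*\wt G_c$ (this map does cover the projections correctly and fixes the fibre since $\langle z_c\rangle$ is central). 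Your argument is shorter, avoids Lemma \ref{normal-descends} and the explicit computation entirely, and makes transparent that the statement is an instance of a standard cohomological principle; what the paper's version buys in exchange is that its intermediate facts (notably Lemma \ref{normal-descends} and the section comparison) are reused in the surrounding development of normal families, whereas your route produces only the cohomology class statement. One small bookkeeping slip in your optional final aside: with $\Phi(e) = w^{-1}ew$, the section $\Phi \circ s_c \circ \alpha_g^{-1}$ has associated cocycle $(\alpha_{g^{-1}})^* f_c$ rather than $\alpha_g^* f_c$ (you want $\Phi^{-1}\circ s_c \circ \alpha_g$, or simply replace $g$ by $g^{-1}$); since $g$ is arbitrary this does not affect the conclusion, and your main argument via the pullback extension is unaffected.
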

\begin{proof}
Let $(\wt G_c, <_c,z_c)$ be the lift of $(G,c)$, and let $\tilde g = (0,g) \in \wt G$. Then by Lemma \ref{normal-descends}, $f_{c_{<_c}^{\tilde g\langle z_c \rangle}}= f_{c_{<_c^{\tilde g}}}$. Let $\nu:G \to \wt G/\langle z_c \rangle$ be the isomorphism given by $\nu(h) = (0,h)\langle z_c \rangle$. Then $\nu^* f_{c_{<_c}} = f_c$ by Lemma \ref{key-equivalence}.

The two cocycles $f_{c_{<_c}}$ and $f_{c_{<_c^{\tilde g}}}$ both correspond to the central extension
\[
1 \lra \ZZ \overset{\iota}\lra \wt G \lra \wt G/\langle z_c \rangle \lra 1
\]
so $\left [f_{c_{<_c}}\right ] = \left [f_{c_{<_c^{\tilde g}}}\right ]$ in $H^2(\wt G/\langle z_c \rangle; \ZZ)$. Indeed, let $s_1,s_2:\wt G/\langle z_c \rangle \to \wt G$ be the sections given by
\[
s_1((0,h)\langle z_c \rangle) = \ol{(0,h)} \quad \text{and} \quad s_2((0,h)\langle z_c \rangle) = \widehat{(0,h)}
\]
where $\ol{(0,h)}$ and $\widehat{(0,h)}$ are the minimal representatives of $(0,h)$ with respect to $<_c$ and $<_c^{(0,g)}$ respectively. Then $f_{c_{<_c}}$ is the cocycle obtained from the section $s_1$, and $f_{c_{<_c^{\tilde g}}}$ is the cocycle obtained from $s_2$ (see Remark \ref{derived-cocycle}).

We now claim that as cocycles on $G$, $\nu^*f_{c_{<_c}^{\tilde g\langle z_c \rangle}} = f_{c^g}$. Let $a,b \in G$. Then
\begin{align*}
\nu^*f_{c_{<_c}^{\tilde g\langle z_c \rangle}}(a,b) &= f_{c_{<_c}^{\tilde g\langle z_c \rangle}}((0,a)\langle z_c \rangle,(0,b)\langle z_c\rangle) \\
&= f_{c_{<_c}}((0,g^{-1}ag)\langle z_c \rangle,(0,g^{-1}bg)\langle z_c\rangle) \\
&= \nu^*f_{c_{<_c}}(g^{-1}ag,g^{-1}bg) \\
&= f_c(g^{-1}ag,g^{-1}bg) \\
&= f_{c^g}(a,b).
\end{align*}
Putting all of this together, in $H^2(G;\ZZ)$ we have
\[
[f_c] = \nu^*\left [f_{c_{<_c}}\right] = \nu^*\left[f_{c_{<_c^{\tilde g}}}\right] = \nu^*\left[f_{c_{<_c}^{\tilde g\langle z_c \rangle}}\right] = [f_{c^g}]
\]
completing the proof.
\end{proof}

Given a circularly-orderable group $G$, we call a collection of circular orderings $S \subset \mathrm{CO}(G)$ {\it cohomologically constant} if the Euler class function $e:S \to H^2(G;\ZZ)$ given by $e(c) = [f_c]$ is constant.  That is, $[f_c] = [f_{c'}]$ for all $c,c' \in S$. 

We are now ready to prove one direction of the analogous result to Theorem \ref{bludov-glass} for circularly-orderable groups.

\begin{proposition}\label{bg-circ-forward}
Suppose that $\{(G_i,c_i)\}_{i \in I}$ are circularly-ordered groups, $H$ is a group and for each $i$, $\phi_i:H \to G_i$ is an injective homomorphism with image $H_i \subset G_i$. If there is a circular ordering $c$ on $*_{i \in I}G_i(H_i \overset{\phi_i}\cong H)$ extending each of the $c_i$, then there exist cohomologically constant normal families \linebreak $\vR_i \subset \mathrm{CO}(G_i)$ such that $c_i \in \vR_i$ for all $i \in I$ and the collections $\{c_i\}_{i \in I}$ and $\{\vR_i\}_{i \in I}$ are compatible with $\{\phi_i\}_{i \in I}$. 
\end{proposition}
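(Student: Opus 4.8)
The plan is to build the families $\vR_i$ directly from the given ordering $c$ on the amalgam, without routing through the lifts at all. Write $P := *_{i \in I}G_i(H_i \stackrel{\phi_i}{\cong}H)$ and let $\delta_i:G_i \to P$ be the canonical inclusions, so that the hypothesis ``$c$ extends each $c_i$'' reads $\delta_i^* c = c_i$, and recall that the amalgamation identifies the images of $H$, i.e. $\delta_i\phi_i = \delta_j\phi_j$ for all $i,j$. First I would define
\[
\vR_i = \{\,\delta_i^*(c^g) \mid g \in P\,\} \subset \mathrm{CO}(G_i),
\]
the set of all restrictions to $G_i$ of the conjugate orderings $c^g$. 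The design principle behind this choice is that every requested property except cohomological constancy becomes a short formal computation with the $P$-action, while cohomological constancy is delivered by Lemma \ref{ccc}.

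Next I would dispatch the formal checks. Taking $g = id$ gives $\delta_i^*c = c_i$, so $c_i \in \vR_i$. For normality I would verify that for $h \in G_i$ one has $(\delta_i^*(c^g))^h = \delta_i^*(c^{\delta_i(h)g})$, using only the definitions of the conjugation and pullback actions and the fact that $\delta_i$ is a homomorphism; since $\delta_i(h)g \in P$, this shows $\vR_i$ is invariant under the $G_i$-action and hence is a normal family. Compatibility of both collections then follows at once from $\delta_i\phi_i = \delta_j\phi_j =: \iota_H$: indeed $\phi_i^* c_i = \phi_i^*\delta_i^* c = \iota_H^* c$ is independent of $i$, and likewise $\phi_i^*\vR_i = \{\,\iota_H^*(c^g) \mid g \in P\,\}$ is independent of $i$, giving $\phi_i^* c_i = \phi_j^* c_j$ and $\phi_i^*\vR_i = \phi_j^*\vR_j$.

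The one substantive point is that each $\vR_i$ is cohomologically constant, and this is exactly the role of Lemma \ref{ccc}. Using the identity $f_{\delta_i^*(c^g)} = \delta_i^* f_{c^g}$ noted after Construction \ref{lift} and passing to cohomology gives $[f_{\delta_i^*(c^g)}] = \delta_i^*[f_{c^g}]$ in $H^2(G_i;\ZZ)$. By Lemma \ref{ccc} applied to $(P,c)$ we have $[f_{c^g}] = [f_c]$ for every $g \in P$, whence $[f_{\delta_i^*(c^g)}] = \delta_i^*[f_c]$, which is independent of $g$. Thus the Euler class is constant on $\vR_i$, with common value $\delta_i^*[f_c] = [f_{c_i}]$, as required.

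The main obstacle is precisely this last step: of the listed properties, cohomological constancy is the only one that is not a formal consequence of the left-invariance of $c$. The natural alternative—producing the $\vR_i$ by importing the Bludov--Glass normal families on the lifts $\wt G_i$ and then quotienting—runs into the difficulty that the central element need not remain cofinal across an arbitrary normal family of left orderings of $\wt G_i$, so the quotient construction of Construction \ref{quotient} may not even apply. Defining $\vR_i$ from the single global ordering $c$ sidesteps this entirely, and Lemma \ref{ccc} supplies the cohomological constancy for free.
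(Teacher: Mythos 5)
Your proof is correct and follows essentially the same route as the paper: the paper also defines $\vR_i = \delta_i^*\{c^w : w \in \vG\}$, verifies $c_i \in \vR_i$, normality via $(\delta_i^*c^w)^g = \delta_i^*c^{\delta_i(g)w}$, compatibility from $\delta_i\phi_i = \delta_j\phi_j$, and cohomological constancy from Lemma \ref{ccc}. Your only divergence is expository---you spell out the step $[f_{\delta_i^*(c^g)}] = \delta_i^*[f_{c^g}]$ via $f_{\phi^*c} = \phi^*f_c$, which the paper leaves implicit in ``so $\vR_i$ is cohomologically constant.''
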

\begin{proof}
To ease notation, let $\vG = *_{i \in I}G_i(H_i \overset{\phi_i}\cong H)$ and for each $i$, let $\delta_i:G_i \to \vG$ be the inclusion homomorphism. Since $c$ extends each of the $c_i$, we have $\delta_i^*c = c_i$ for all $i \in I$. Then for $i,j \in I$,
\[
\phi_i^*c_i = \phi_i^*\delta_i^*c = \phi_j^*\delta_j^*c = \phi_j^*c_j
\]
so $\{c_i\}_{i \in I}$ is compatible with $\{\phi_i\}_{i \in I}$.

To construct the families $\vR_i \subset \mathrm{CO}(G_i)$, first construct a family $\vR \subset \mathrm{CO}(\vG)$ by $\vR = \{c^w: w \in \vG\}$. For each $i \in I$, define $\vR_i = \delta_i^*\vR$. Note that $\vR$ is cohomologically constant by Lemma \ref{ccc}, so $\vR_i$ is cohomologically constant. Since $c_i = \delta_i^*c$, we have $c_i \in \vR_i$. For compatibility, let $i,j \in I$. Since $\phi_i^*\delta_i^*c^w = \phi_j^*\delta_j^*c^w$, $\phi_i^*\vR_i = \phi_j^*\vR_j$ and $\{\vR_i\}_{i \in I}$ is compatible with $\{\phi_i\}_{i \in I}$. Finally, for normality let $\delta_i^*c^w \in \vR_i$, and let $g \in G_i$. Then $(\delta_i^*c^w)^g = \delta_i^*c^{\delta_i(g)w}$, completing the proof.
\end{proof}

We now shift our focus to proving a partial converse to Proposition \ref{bg-circ-forward}, which is Proposition \ref{bg-circ-back} below.

Suppose that $\{G_i,c_i\}_{i \in I}$ are circularly-ordered groups, $H$ is a group and for each $i$, $\phi_i:H \to G_i$ is an injective homomorphism with image $H_i \subset G_i$. Assume that for each $i$ there is a cohomologically constant normal family $\vR_i \subset \mathrm{CO}(G_i)$ such that $c_i \in \vR_i$, and both $\{c_i\}_{i \in I}$ and $\{\vR_i\}_{i \in I}$ are compatible with $\{\phi_i\}_{i \in I}$.

The strategy to prove Proposition \ref{bg-circ-back} is to construct normal families $\wt \vR_i \subset \mathrm{LO}(\wt G_i)$, show that $\{\wt\vR_i\}_{i \in I}$ is compatible with $\{\wt\phi_i\}_{i \in I}$, and use Theorem \ref{bludov-glass} and Theorem \ref{main-coro} to conclude that $*_{i \in I}G_i(H_i \overset{\phi_i}\cong H)$ is circularly-orderable via a circular ordering extending each of the $c_i$. We will first construct the $\wt \vR_i$ and show it is normal.

\begin{construction}\label{normal-family-lift}
Suppose $(G,c)$ is a circularly-ordered group, and let $\vR \subset \mathrm{CO}(G)$ be a cohomologically constant normal family of circular orderings such that $c \in \vR$. Let $\vR = \{c_\alpha : \alpha \in \vA\}$. For each $\alpha \in \vA$, choose a function $d_\alpha:G \to \ZZ$ such that 
\[
f_c(a,b) - f_{c_\alpha}(a,b) = d_\alpha(a) + d_\alpha(b) - d_{\alpha}(ab)
\]
for all $a,b \in G$. Such a function exists since $\vR$ is cohomologically constant.  Let $(\wt G,<,z)$ be the lift of $(G,c)$, and recall the positive cone $P \subset \wt G$ of $<$ is given by $P \cup \{id\} = \{(n,a) \in \wt G: n \geq 0\}$. For each $\alpha \in \vA$ and $\varphi \in H^1(G;\ZZ) = \operatorname{Hom}(G,\ZZ)$, define the positive cone $P_{\alpha,\varphi} \subset \wt G$ by
\[
P_{\alpha,\varphi} \cup\{id\} = \{(n,a) \in \wt G: n + d_\alpha(a) + \varphi(a) \geq 0\}.
\]
The set $P_{\alpha,\varphi}$ can be seen to be a positive cone as follows. Let $(\wt G^\alpha,<_{c_\alpha},z_{c_\alpha})$ be the lift of $(G,c_\alpha)$ with positive cone $Q_\alpha$. There is an equivalence of short exact sequences $\Phi_{\alpha,\varphi}:\wt G \to \wt G^\alpha$ given by
\[
\Phi_{\alpha,\varphi}(n,a) = (n + d_\alpha(a) + \varphi(a),a).\]
Then $\Phi_{\alpha,\varphi}^*Q_\alpha = P_{\alpha,\varphi}$.

Define the family $\wt \vR \subset \mathrm{LO}(\wt G)$ to be the set $
\wt \vR = \{P_{\alpha,\varphi}:\alpha \in \vA, \varphi \in H^1(G;\ZZ)\}.
$
\end{construction}
\begin{remark}
\label{construction-remark}
Note that if $d'_\alpha:G \to \ZZ$ is another choice of function so that
\[
f_c(a,b) - f_{c_\alpha}(a,b) = d'_\alpha(a) + d'_\alpha(b) - d'_\alpha(ab),\]
then $d_\alpha - d'_\alpha \in H^1(G;\ZZ)$. Therefore $\wt\vR$ does not depend on our choices of $d_\alpha$.

Furthermore, every equivalence of short exact sequences $\Phi:\wt G^\alpha \to \wt G$ is of the form $\Phi_{\alpha,\varphi}$, so we are constructing the family $\wt \vR$ by pulling back the positive cones $Q_\alpha$ via every equivalence of short exact sequences $\wt G \to \wt G^\alpha$.  In fact we could have equivalently defined $\wt \vR$ as the set of left orders $\prec$ on $\wt G$ such that \linebreak $\eta_GQ(\wt G,\prec,z)= (G,c_{\alpha})$ for some $\alpha \in \mathcal{A}$, where $Q$ is the quotient functor from Lemma \ref{functors} and $\eta_G:\wt G/\langle z \rangle \to G$ is the isomorphism from Lemma \ref{key-equivalence}.
\end{remark}

The next two lemmas prove that $\wt \vR$ is a normal family.
\begin{lemma}\label{unlikely-homomorphism}
Let $(G,c)$ be a circularly-ordered group, and let $\vR = \{c_\alpha:\alpha \in \vA\} \subset \mathrm{CO}(G)$ be a cohomologically constant normal family such that $c \in \vR$. Let $g \in G\sm\{id\}$ and suppose $\alpha,\beta \in \vA$ are such that $f_{c_\alpha}(a,b) = f_{c_\beta}(gag^{-1},gbg^{-1})$ for all $a,b \in G$. Choose $d_\alpha,d_\beta:G \to \ZZ$ as in Construction \ref{normal-family-lift}. Then $\varphi_{g,\alpha,\beta}:G \to \ZZ$ given by
\[
\varphi_{g,\alpha,\beta}(a) = -1 + f_c(g,a) + f_c(ga,g^{-1}) + d_\beta(gag^{-1}) - d_\alpha(a)
\]
is a homomorphism.
\end{lemma}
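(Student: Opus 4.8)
The plan is to verify directly that $\varphi := \varphi_{g,\alpha,\beta}$ satisfies $\varphi(ab) = \varphi(a) + \varphi(b)$, by computing $\varphi(a) + \varphi(b) - \varphi(ab)$ and showing it vanishes. First I would sort the resulting terms by type. The constant terms contribute $-1 + (-1) - (-1) = -1$. The $d_\alpha$- and $d_\beta$-terms are handled using the defining relations from Construction \ref{normal-family-lift}: the combination $-d_\alpha(a) - d_\alpha(b) + d_\alpha(ab)$ equals $-(f_c(a,b) - f_{c_\alpha}(a,b))$, while $d_\beta(gag^{-1}) + d_\beta(gbg^{-1}) - d_\beta(gabg^{-1})$ equals $f_c(gag^{-1},gbg^{-1}) - f_{c_\beta}(gag^{-1},gbg^{-1})$, since $gabg^{-1} = (gag^{-1})(gbg^{-1})$. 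At this point the conjugation hypothesis $f_{c_\alpha}(a,b) = f_{c_\beta}(gag^{-1},gbg^{-1})$ makes the $f_{c_\alpha}$ and $f_{c_\beta}$ contributions cancel exactly, leaving only terms in $f_c$ together with the constant $-1$.

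Concretely, after this bookkeeping I expect
\[
\varphi(a) + \varphi(b) - \varphi(ab) = -1 + E,
\]
where
\[
E = f_c(gag^{-1},gbg^{-1}) - f_c(a,b) + \big(f_c(g,a) + f_c(g,b) - f_c(g,ab)\big) + \big(f_c(ga,g^{-1}) + f_c(gb,g^{-1}) - f_c(gab,g^{-1})\big).
\]
The crux is to show $E = 1$. I would establish this by interpreting the terms inside the lift $\wt{G}_c$ rather than by a brute-force application of the cocycle identity. Writing $s(a) = (0,a)$ for the canonical section and $\wt g = s(g)$, conjugation by $\wt g$ is an automorphism of $\wt{G}_c$ covering conjugation by $g$ on $G$, so $\wt g\, s(a)\, \wt g^{-1} = \iota\big(\rho(a)\big) s(gag^{-1})$ for a unique $\rho(a) \in \ZZ$. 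A short computation with the section relation $s(x)s(y) = \iota(f_c(x,y)) s(xy)$ and with $s(g)^{-1} = \iota(-f_c(g,g^{-1}))s(g^{-1})$ gives
\[
\rho(a) = f_c(g,a) + f_c(ga,g^{-1}) - f_c(g,g^{-1}).
\]
Since conjugation by $\wt g$ is multiplicative, comparing $\wt g\, s(a)s(b)\, \wt g^{-1}$ computed two ways yields the coboundary relation $f_c(gag^{-1},gbg^{-1}) - f_c(a,b) = \rho(ab) - \rho(a) - \rho(b)$.

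Substituting this relation and the formula for $\rho$ into $E$, the two bracketed $f_c$-sums assemble into $\rho(a) + \rho(b) - \rho(ab) + f_c(g,g^{-1})$, which cancels against $f_c(gag^{-1},gbg^{-1}) - f_c(a,b) = -(\rho(a)+\rho(b)-\rho(ab))$, leaving $E = f_c(g,g^{-1})$. Because $g \neq id$, Construction \ref{lift} gives $f_c(g,g^{-1}) = 1$, which is precisely where the hypothesis $g \in G \sm \{id\}$ is used; hence $E = 1$ and $\varphi(a) + \varphi(b) - \varphi(ab) = 0$. I expect the main obstacle to be the identification $E = f_c(g,g^{-1})$: doing it by hand from the cocycle identity is error-prone bookkeeping, so I would instead lean on the conceptual fact that inner automorphisms act trivially on $H^2(G;\ZZ)$, realized explicitly by the conjugation computation in $\wt{G}_c$ above.
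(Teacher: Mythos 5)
Your proof is correct, and it takes a genuinely different route from the paper at the decisive step. The opening reduction---absorbing the $d_\alpha$- and $d_\beta$-terms via their defining relations from Construction \ref{normal-family-lift} and cancelling $f_{c_\alpha}(a,b)$ against $f_{c_\beta}(gag^{-1},gbg^{-1})$ by hypothesis---matches the paper exactly, and both arguments invoke $g \neq id$ at the same spot, to evaluate $f_c(g,g^{-1}) = 1$ (the paper uses the equivalent $f_c(g^{-1},g)=1$). Where you diverge is in showing that the residual expression $E$ equals $1$: the paper does this by hand, through four successive substitutions of the cocycle identity (the square-bracketed groupings in its displayed computation), which is exactly the sign-sensitive bookkeeping you predicted and avoided. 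Instead you pass to the lift $\wt G_c$, note that conjugation by $s(g) = (0,g)$ covers conjugation by $g$, compute $\rho(a) = f_c(g,a) + f_c(ga,g^{-1}) - f_c(g,g^{-1})$, and extract the coboundary relation $f_c(gag^{-1},gbg^{-1}) - f_c(a,b) = \rho(ab) - \rho(a) - \rho(b)$ from multiplicativity of conjugation; your assembly $E = f_c(g,g^{-1})$ checks out. This is the explicit shadow of the fact that inner automorphisms act trivially on $H^2(G;\ZZ)$, which the paper proves separately and differently in Lemma \ref{ccc}, and it makes the lemma transparent: for $g \neq id$ one has $\varphi_{g,\alpha,\beta}(a) = \rho(a) + d_\beta(gag^{-1}) - d_\alpha(a)$, a combination of primitives whose coboundaries cancel by hypothesis, so additivity is no longer a coincidence of cancellations. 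Your conjugation formula also dovetails with the paper's own later computation $(0,g)(n,a)(0,g)^{-1} = (n-1+f_c(g,a)+f_c(ga,g^{-1}),gag^{-1})$ in the proof of Lemma \ref{normal-lift-is-normal}. What the paper's brute-force computation buys in exchange is self-containedness: it never leaves the cocycle algebra on $G$ itself, at the cost of offering no explanation for why the identity holds.
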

\begin{proof}
Let $a,b \in G$.  From the definition of $d_\alpha$ and $d_\beta$ we have
\begin{align*}
d_\alpha(a) + d_\alpha(b) - d_\alpha(ab) &= f_c(a,b) - f_{c_\alpha}(a,b),  \quad \text{and} \quad \\
d_\beta(gag^{-1}) + d_\beta(gbg^{-1}) - d_\beta(gabg^{-1}) &= f_c(gag^{-1},gbg^{-1}) - f_{c_\beta}(gag^{-1},gbg^{-1}).
\end{align*}
Note also that since $g \neq id$, $f_c(g^{-1},g) = 1$. In the computation below we will use these facts freely, rearrange terms as necessary, and enclose in square braces any terms that are to be replaced using the cocycle condition. We calculate:
\begin{align*}
&\varphi_{g,\alpha,\beta}(ab) - \varphi_{g,\alpha,\beta}(a) - \varphi_{g,\alpha,\beta}(b)
\\
& \quad= -1 + f_c(g,ab) + f_c(gab,g^{-1}) + d_\beta(gabg^{-1}) - d_\alpha(ab) \\
& \quad \quad +1 - f_c(g,a) - f_c(ga,g^{-1}) - d_\beta(gag^{-1}) + d_\alpha(a) \\
& \quad \quad +1 - f_c(g,b) - f_c(gb,g^{-1}) - d_\beta(gbg^{-1}) + d_\alpha(b) \\
&\quad = 1 + f_c(g,ab) + [f_c(gab,g^{-1})] - f_c(g,a) - f_c(ga,g^{-1}) - f_c(g,b) - f_c(gb,g^{-1})\\
&\quad \quad + f_c(a,b) - f_{c_\alpha}(a,b) - f_c(gag^{-1},gbg^{-1}) + f_{c_\beta}(gag^{-1},gbg^{-1})\\
&\quad = 1 + [f_c(a,b) - f_c(ga,b) + f_c(g,ab) - f_c(g,a)] + [f_c(b,g^{-1}) - f_c(gb,g^{-1}) - f_c(g,b)]\\
&\quad \quad + f_c(ga,bg^{-1}) - f_c(ga,g^{-1}) - f_c(gag^{-1},gbg^{-1})\\
&\quad = 1 + [-f_c(g,bg^{-1}) +f_c(ga,bg^{-1}) - f_c(gag^{-1},gbg^{-1})] - f_c(ga,g^{-1}) \\
&\quad = 1 + -f_c(gag^{-1},g) - f_c(ga,g^{-1}) \\
&\quad = [f_c(g^{-1},g) - f_c(gag^{-1},g) + f_c(ga,id) - f_c(ga,g^{-1})] \\
&\quad = 0,
\end{align*}
completing the proof.
\end{proof}

\begin{lemma} \label{normal-lift-is-normal}
Let $(G,c)$ be a circularly-ordered group, and let $\vR = \{c_\alpha:\alpha \in \vA\} \subset \mathrm{CO}(G)$ be a cohomologically constant normal family such that $c \in \vR$. Let $(\wt G,<,z)$ be the lift of $(G,c)$. The set $\wt\vR \subset \mathrm{LO}(G)$ from Construction \ref{normal-family-lift} is a normal family of left orderings on $\wt G$.
\end{lemma}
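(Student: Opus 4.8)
The plan is to show directly that $\wt\vR=\{P_{\alpha,\varphi}:\alpha\in\vA,\ \varphi\in H^1(G;\ZZ)\}$ is closed under the $\wt G$-action: for every $\prec\,\in\wt\vR$ (say with positive cone $P_{\alpha,\varphi}$) and every $w\in\wt G$, I must exhibit the positive cone of $\prec^w$ as some $P_{\beta,\psi}$ with $\beta\in\vA$ and $\psi\in H^1(G;\ZZ)$. The first reduction is to note that $z=(1,id)$ is central, so writing $w=(m,g)$ and $\tilde g=(0,g)$ we have $wP_{\alpha,\varphi}w^{-1}=\tilde g P_{\alpha,\varphi}\tilde g^{-1}$; hence $\prec^w=\prec^{\tilde g}$ and it suffices to conjugate by $\tilde g$. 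The case $g=id$ is trivial, so I assume $g\neq id$, where $f_c(g,g^{-1})=1$ gives $\tilde g^{-1}=(-1,g^{-1})$.

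Next I would compute the conjugated cone explicitly. A short calculation in $\wt G$ gives
\[
\tilde g^{-1}(n,a)\tilde g=\bigl(n-1+f_c(g^{-1},a)+f_c(g^{-1}a,g),\,g^{-1}ag\bigr),
\]
so $(n,a)$ lies in the cone $\tilde g P_{\alpha,\varphi}\tilde g^{-1}$ of $\prec^{\tilde g}$ precisely when $n+S(a)\ge 0$, where
\[
S(a):=-1+f_c(g^{-1},a)+f_c(g^{-1}a,g)+d_\alpha(g^{-1}ag)+\varphi(g^{-1}ag).
\]
To recognize this as a $P_{\beta,\psi}$ I must pin down $\beta$, and here Lemma \ref{normal-descends} does the conceptual work: passing to the quotient converts conjugation of the left order into conjugation of the associated circular order, so the circular order underlying $\prec^{\tilde g}$ is $(c_\alpha)^g$. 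Since $\vR$ is a normal family, $(c_\alpha)^g=c_\beta$ for some $\beta\in\vA$. With this $\beta$ fixed, I set $\psi:=S-d_\beta$, so that the cone of $\prec^{\tilde g}$ is exactly $P_{\beta,\psi}$; the entire lemma then reduces to verifying $\psi\in H^1(G;\ZZ)$.

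This last verification is the heart of the matter, and it is precisely what Lemma \ref{unlikely-homomorphism} was designed to furnish. The plan is to decompose
\[
\psi(a)=\varphi_{g^{-1},\beta,\alpha}(a)+\varphi(g^{-1}ag),
\]
where the second summand is visibly a homomorphism, being $\varphi$ precomposed with the inner automorphism $a\mapsto g^{-1}ag$. For the first summand I would apply Lemma \ref{unlikely-homomorphism} with parameters $(g^{-1},\beta,\alpha)$: its hypothesis $f_{c_\beta}(a,b)=f_{c_\alpha}(g^{-1}ag,g^{-1}bg)$ is exactly the cocycle identity $f_{c_\beta}=f_{(c_\alpha)^g}$ coming from the choice $c_\beta=(c_\alpha)^g$, so $\varphi_{g^{-1},\beta,\alpha}$ is a homomorphism and therefore $\psi\in H^1(G;\ZZ)$. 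This gives $\prec^{\tilde g}\in\wt\vR$ and, $w$, $\alpha$, $\varphi$ being arbitrary, normality of $\wt\vR$. The main obstacle is thus entirely isolated inside Lemma \ref{unlikely-homomorphism} (whose proof is the long computation with the cocycle condition); what remains is careful bookkeeping, principally keeping the $g$ versus $g^{-1}$ conjugations straight and matching $S-d_\beta$ against the defining expression for $\varphi_{g^{-1},\beta,\alpha}$.
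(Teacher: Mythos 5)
Your proposal is correct and follows essentially the same route as the paper: reduce to conjugation by $(0,g)$ using centrality of $z$, choose $\beta$ so that $c_\beta = (c_\alpha)^g$ (the paper posits such a $\beta$ implicitly via normality of $\vR$, with hypothesis $f_{c_\alpha}(a,b) = f_{c_\beta}(gag^{-1},gbg^{-1})$), and invoke Lemma \ref{unlikely-homomorphism} to identify the conjugated cone as some $P_{\beta,\psi}$. Your computation is the mirror image of the paper's---you conjugate by $\tilde g^{-1}$ and write $\psi = \varphi_{g^{-1},\beta,\alpha} + \varphi\circ\mathrm{conj}$, whereas the paper conjugates by $\tilde g$ and sets $\psi = \varphi - \varphi_{g,\alpha,\beta}$---an inessential relabeling, since normality requires invariance under conjugation by all of $\wt G$ in any case.
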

\begin{proof}
Consider a positive cone $P_{\alpha,\varphi} \in \wt \vR$ and let $g \in G\sm\{id\}$.  Since $(m,id) \in \wt G$ is central and $(m,g) = (m,id)(0,g)$, it suffices to show $(0,g)P_{\alpha,\varphi}(0,g)^{-1} \in \wt\vR$.

Let $\beta \in \vA$ be such that $f_{c_\alpha}(a,b) = f_{c_\beta}(gag^{-1},gbg^{-1})$.  Define the homomorphism $\psi:G \to \ZZ$ by $\psi(a) = \varphi(a) - \varphi_{g,\alpha,\beta}(a)$ (where $\varphi_{g,\alpha,\beta}$ is defined in Lemma \ref{unlikely-homomorphism}).  We will show $(0,g)P_{\alpha,\varphi}(0,g)^{-1} = P_{\beta,\psi}$.

Let $(n,a) \in P_{\alpha,\varphi} \cup \{id\}$, so $n + d_\alpha(a) + \varphi(a) \geq 0$.  Then 
\[
(0,g)(n,a)(0,g)^{-1} = (n-1 + f_c(g,a) + f_c(ga,g^{-1}),gag^{-1}).
\]
To check that $(0,g)(n,a)(0,g)^{-1} \in P_{\beta,\psi} \cup \{id\}$ we have
\begin{align*}
&n-1 + f_c(g,a) + f_c(ga,g^{-1}) + d_\beta(gag^{-1}) + \psi(a) \\
&\quad \quad = n - 1 + f_c(g,a) + f_c(ga,g^{-1}) + d_\beta(gag^{-1}) + \varphi(a) \\
&\quad \quad \quad \quad - (-1 + f_c(g,a) + f_c(ga,g^{-1}) + d_\beta(gag^{-1}) - d_\alpha(a)) \\
& \quad \quad = n + d_\alpha(a) + \varphi(a) \geq 0.
\end{align*}
Therefore $(0,g)(n,a)(0,g)^{-1} \in P_{\beta,\psi} \cup\{id\}$ and $(0,g)P_{\alpha,\varphi}(0,g)^{-1} \subset P_{\beta,\psi}$.  Since both sets are positive cones, we have $(0,g)P_{\alpha,\varphi}(0,g)^{-1} = P_{\beta,\psi}$, completing the proof.
\end{proof}

The next proposition provides a partial converse to Proposition \ref{bg-circ-forward}.

\begin{proposition}\label{bg-circ-back}
Suppose that $\{(G_i,c_i)\}_{i \in I}$ are circularly-ordered groups, $H$ is a group and for each $i$, $\phi_i:H \to G_i$ is an injective homomorphism with image $H_i \subset G_i$. Suppose further that for each $i \in I$, \[\phi_i^*:H^1(G_i;\ZZ) \to H^1(H;\ZZ)\] is surjective. If there exist cohomologically constant normal families $\vR_i \subset \mathrm{CO}(G_i)$ such that $c_i \in \vR_i$ for all $i \in I$ and the collections $\{c_i\}_{i \in I}$ and $\{\vR_i\}_{i \in I}$ are compatible with $\{\phi_i\}_{i \in I}$, then there is a circular ordering $c$ on $*_{i \in I}G_i(H_i \overset{\phi_i}\cong H)$ extending each of the $c_i$.
\end{proposition}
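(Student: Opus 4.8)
The plan is exactly the strategy announced before the statement: push everything up to the central extensions, invoke the theorem of Bludov and Glass (Theorem \ref{bludov-glass}) to produce a left ordering on $*_{i \in I}\wt G_i(\wt H_i \overset{\wt\phi_i}\cong \wt H)$ extending each $<_{c_i}$, and then descend via Theorem \ref{main-coro}. Concretely, for each $i$ apply Construction \ref{normal-family-lift} to $\vR_i$ to obtain $\wt\vR_i \subset \mathrm{LO}(\wt G_i)$; this is a normal family by Lemma \ref{normal-lift-is-normal}, and it contains $<_{c_i}$ (take the index of $c_i$ itself together with $\varphi = 0$, for which $d_\alpha = 0$ is admissible and $P_{\alpha,\varphi}$ is the cone of $<_{c_i}$). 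To feed these into Theorem \ref{bludov-glass} I must check two things: that $\{<_{c_i}\}_{i \in I}$ is compatible with $\{\wt\phi_i\}_{i \in I}$, and that $\{\wt\vR_i\}_{i \in I}$ is compatible with $\{\wt\phi_i\}_{i \in I}$.

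The first is routine: since $\phi_i:(H,d) \to (H_i,c_i)$ is order-preserving, its lift $\wt\phi_i$ is a morphism in $\LOs$, so $\wt\phi_i^*<_{c_i}$ is the standard lift $<_d$ of $(H,d)$, independent of $i$. The heart of the argument is the second statement. Introduce the common target family on $\wt H$. First note $\phi_i^*\vR_i$ is a cohomologically constant (because $[f_{\phi_i^*c_{i,\alpha}}] = \phi_i^*[f_{c_{i,\alpha}}]$) normal family on $H$ containing $d$, and by compatibility $\phi_i^*\vR_i = \phi_j^*\vR_j =: \vR_H$. Let $\wt\vR_H \subset \mathrm{LO}(\wt H)$ be the family produced by Construction \ref{normal-family-lift} from $\vR_H$ (with distinguished element $d$). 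I claim that $\wt\phi_i^*\wt\vR_i = \wt\vR_H$ for every $i$, which immediately gives the desired equality $\wt\phi_i^*\wt\vR_i = \wt\phi_j^*\wt\vR_j$.

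To verify the claim, compute $\wt\phi_i^*P_{\alpha,\varphi}$ directly. Since $\wt\phi_i(m,h) = (m,\phi_i(h))$, its pullback cone is $\{(m,h) : m + (\phi_i^*d_\alpha)(h) + (\phi_i^*\varphi)(h) \geq 0\}$ away from the identity. Using $f_{\phi_i^*c} = \phi_i^*f_c$, one sees that $\phi_i^*d_\alpha$ is an admissible correcting function for the order $\phi_i^*c_{i,\alpha} \in \vR_H$, and by Remark \ref{construction-remark} this choice is immaterial. Hence $\wt\phi_i^*P_{\alpha,\varphi}$ is precisely the member of $\wt\vR_H$ lying over $\phi_i^*c_{i,\alpha}$ with twist $\phi_i^*\varphi$. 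Consequently $\wt\phi_i^*\wt\vR_i$ consists of exactly those members of $\wt\vR_H$ whose $H^1(H;\ZZ)$-twist lies in $\mathrm{Im}(\phi_i^*)$. This is the step I expect to be the main obstacle, and it is exactly where the hypothesis that $\phi_i^*:H^1(G_i;\ZZ) \to H^1(H;\ZZ)$ is surjective enters: surjectivity forces $\mathrm{Im}(\phi_i^*) = H^1(H;\ZZ)$, so every twist is realized; together with $\phi_i^*\vR_i = \vR_H$ (so every circular order in $\vR_H$ is hit), this yields $\wt\phi_i^*\wt\vR_i = \wt\vR_H$.

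With both compatibility statements established and each $\wt\vR_i$ normal containing $<_{c_i}$, Theorem \ref{bludov-glass} produces a left ordering $<$ on $*_{i \in I}\wt G_i(\wt H_i \overset{\wt\phi_i}\cong \wt H)$ extending each of the $<_{c_i}$. Finally, the implication $(2) \Rightarrow (1)$ of Theorem \ref{main-coro} converts this into a circular ordering $c$ on $*_{i \in I}G_i(H_i \overset{\phi_i}\cong H)$ extending each $c_i$, completing the proof.
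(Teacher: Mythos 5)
Your proposal is correct and takes essentially the same approach as the paper: lift the families via Construction \ref{normal-family-lift}, check that $\{<_{c_i}\}_{i\in I}$ and $\{\wt\vR_i\}_{i \in I}$ are compatible with $\{\wt\phi_i\}_{i\in I}$ (with surjectivity of $\phi_i^*$ on $H^1$ entering exactly where you flag it), then conclude via Theorem \ref{bludov-glass} and Theorem \ref{main-coro}. Your one organizational difference---routing pairwise compatibility through the common family $\wt\vR_H$ on $\wt H$ rather than directly verifying $\wt\phi_i^*P_{\alpha,\varphi} = \wt\phi_j^*P_{\beta,\hat\psi}$---is the paper's computation in disguise: its check that $\psi$ is a homomorphism is precisely your observation, via Remark \ref{construction-remark}, that $\phi_i^*d_\alpha$ and $\phi_j^*d_\beta$ are both admissible correcting functions for the same pulled-back circular ordering on $H$.
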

\begin{proof}
Fix $i \in I$, and suppose $c_i = c_\gamma \in \vR_i$. Then $d_\gamma:G \to \ZZ$ is necessarily a homomorphism. Let $(\wt G_i,<_i,z_i)$ be the lift of $(G_i,c_i)$ with positive cone $P_i$. Then $P_i = P_{\gamma,-d_\gamma} \in \wt \vR_i$. By Theorem \ref{main-coro}, Theorem \ref{bludov-glass}, and Lemma \ref{normal-lift-is-normal}, it suffices to show that both $\{<_i\}_{i \in I}$ and $\{\wt \vR_i\}_{i \in I}$ (where $\wt \vR_i$ is defined in Construction \ref{normal-family-lift}) are compatible with $\{\wt\phi_i\}_{i \in I}$. Let $(\wt H,\prec,z)$ be the lift of $(H,\phi_i^*c_i)$, which is independent of $i$ by compatibility, and recall that $\wt\phi_i:\wt H \to \wt G_i$ is given by $\wt\phi_i(n,h) = (n,\phi_i(h))$.

Fix $i,j \in I$. Then $\wt\phi_i^*P_i \cup\{id\} = \wt\phi_j^*P_j\cup\{id\} = \{(n,h) \in \wt H: n \geq 0\}$.  Therefore $\{<_i\}_{i \in I}$ is compatible with $\{\wt\phi_i\}_{i \in I}$.

For compatibility of $\{\wt \vR_i\}_{i \in I}$, let $c_\alpha \in \vR_i$, and choose $c_\beta \in \vR_j$ such that $\phi_i^*c_\alpha = \phi_j^*c_\beta$.  Let $\varphi \in H^1(G_i,\ZZ)$.  We first claim that $\psi:H \to \ZZ$ given by $\psi(h) = d_\alpha\phi_i(h) + \varphi\phi_i(h) - d_\beta\phi_j(h)$ is a homomorphism.  Since $\varphi\phi_i$ is a homomorphism we have
\begin{align*}
\psi(ab) - \psi(a)-\psi(b) &= d_\alpha\phi_i(ab) - d_\beta\phi_j(ab) - d_\alpha\phi_i(a) + d_\beta\phi_j(a) - d_\alpha\phi_i(b) + d_\beta\phi_j(b) \\
&= \phi_i^*f_{c_\alpha}(a,b) - \phi_i^*f_{c_i}(a,b) -\phi_j^*f_{c_\beta}(a,b) + \phi_j^*f_{c_j}(a,b) \\
&= 0
\end{align*}
since $\phi_i^*f_{c_\alpha} = \phi_j^*f_{c_\beta}$ and $\phi_i^*f_{c_i} = \phi_j^*f_{c_j}$.  Choose $\hat\psi \in H^1(G_j,\ZZ)$ with the property that $\phi_j^*\hat\psi = \psi$. It suffices to show $\wt\phi_i^*P_{\alpha,\varphi} = \wt\phi_j^*P_{\beta,\hat\psi}$.  

Let $(n,h) \in \wt\phi_i^*P_{\alpha,\varphi} \cup\{id\}$, so $n + d_\alpha\phi_i(h) + \varphi\phi_i(h) \geq 0$.  We want to show $(n,h) \in \wt\phi_j^*P_{\beta,\hat\psi} \cup \{id\}$.  We have
\begin{align*}
n + d_\beta\phi_j(h) + \hat\psi\phi_j(h) &= n + d_\beta\phi_j(h) + \psi(h) \\
&= n + d_\beta\phi_j(h) + d_\alpha\phi_i(h) + \varphi\phi_i(h) - d_\beta\phi_j(h) \\
&= n + d_\alpha\phi_i(h) + \varphi\phi_i(h)\\
&\geq 0.
\end{align*}
Therefore $\wt\phi_i^*P_{\alpha,\varphi} \subset \wt\phi_j^*P_{\beta,\hat\psi}$ and since both are positive cones in $H$, they are equal, completing the proof.
\end{proof}

However the techniques of Propostion \ref{bg-circ-back} do not tell the whole story.  In the example that follows, we produce groups $G_1$ and $G_2$, injective homomorphisms $\phi_i :H \rightarrow G_i$, and equip the $G_i$'s with a circular orderings $c_i$ compatible with $\{\phi_1, \phi_2\}$.  Further, we equip the groups $G_1$ and $G_2$ with normal, compatible, cohomologically constant families of circular orderings $\mathcal{R}_1$ and $\mathcal{R}_2$ containing $c_1$ and $c_2$ respectively.  However the hypothesis that $\phi_i^*:H^1(G_i;\ZZ) \to H^1(H;\ZZ)$ be surjective for each $i$ will fail in our setup, and consequently the families $\wt{\mathcal{R}}_1$ and $\wt{\mathcal{R}}_2$ produced by Construction \ref{normal-family-lift} are not compatible with $\{ \wt{\phi}_1, \wt{\phi}_2\}$.  Nevertheless, $*_{i \in I}G_i(H_i \overset{\phi_i}\cong H)$ admits a circular ordering $c$ extending each of the $c_i$.

\begin{example} Set $G_1 = \mathbb{Z} \times \mathbb{Z}_n$ for some $n \geq 2$ and $G_2 = \mathbb{Z} \rtimes \mathbb{Z}_{2}$, where the action of $\mathbb{Z}_{2}$ on $\mathbb{Z}$ is multiplication by $-1$.  Let $H$ be an infinite cyclic group generated by $t$, and define $\phi_i : H \rightarrow G_i$ by $\phi_i(t) = (1,0)$, where $(1, 0)$ is understood as either an element of $G_1$ or $G_2$ depending on the subscript of $\phi_i$.

Define $\psi: \ZZ_n \rightarrow S^1$ by $\psi(1) = e^{2 \pi i/n}$ and let $c$ denote the standard ordering of $S^1$.  Equip $\ZZ_n$ with the circular ordering $\psi^*c$ and let $\mathcal{R}_1$ consist of the two circular orderings of $G_1$ that arise lexicographically from the short exact sequence
\[ 1 \rightarrow H \stackrel{\phi_1}{\rightarrow} G_1 \rightarrow \ZZ_n \rightarrow 1
\]
using the two standard linear orderings of $H$ and the circular ordering $\psi^*c$ of $\ZZ_n$.  Fix $c_1$ to be the circular ordering of $\mathcal{R}_1$ arising from the linear ordering of $H$ satisfying $t> id$.  Let $\mathcal{R}_2$ be the two circular orderings of $G_2$ that arise lexicographically from the short exact sequence 
\[ 1 \rightarrow H \stackrel{\phi_2}{\rightarrow} G_2 \rightarrow \ZZ_2 \rightarrow 1
\]
and fix $c_2$ to be the circular ordering arising from a choice of linear ordering of $H$ satisfying $t>id$. 

By construction, the families $\mathcal{R}_1$ and $\mathcal{R}_2$ are normal and compatible with the maps $\{ \phi_1, \phi_2\}$.  Moreover they are cohomologically constant: in the case of $\mathcal{R}_2$ this follows from an application of Lemma \ref{ccc}, in the case of $\mathcal{R}_1$ this follows from a direct analysis of the lifts and their corresponding short exact sequences.  Now let us analyze the families $\wt{\mathcal{R}}_i$ that arise from an application of Construction \ref{normal-family-lift}. 

Let $<$ denote the natural lexicographic ordering of $\mathbb{Z} \times \mathbb{Z}$ where the second factor is cofinal, and $P$ its positive cone.  There is an order-isomorphism $\Psi_1:  (G_1, c_1) \rightarrow Q(\ZZ \times \ZZ, <, (0, n))$ and thus there is an order-isomorphism $\Phi : (\wt{G}_1, <_{c_1}, z_{c_1}) \rightarrow (\ZZ \times \ZZ, <, (0, n))$, it is the composition
\[ (\wt G_1, <_{c_1}, z_{c_1}) \stackrel{L\Psi_1}{\rightarrow}  LQ(\ZZ \times \ZZ, <, (0, n)) \stackrel{\nu_{\ZZ \times \ZZ}}{\rightarrow} (\ZZ \times \ZZ, <, (0, n)).
\]
Here, $\nu_{\ZZ \times \ZZ}$ is the isomorphism arising from the categorical equivalence defined in Proposition \ref{categorical-equivalence} and $Q, L$ refer to the lift and quotient functors of  Lemma \ref{functors}.  

Let $\delta : \ZZ \times \ZZ \rightarrow \ZZ \times \ZZ$ denote the automorphism with matrix $\begin{bsmallmatrix} 1 & 0 \\ n  & 1 \end{bsmallmatrix}$.  One can verify that for each $k \in \ZZ$, the left ordering $<^k$ corresponding to the positive cone $(\delta^k)^*P$ also satisfies $Q(\ZZ \times \ZZ, <^k, (0,n)) \cong (G_1, c_1)$ where $\cong$ is an order-isomorphism.  Set $S = \{ (\delta^k \circ \Phi)^*P \mid k \in \ZZ \}$ and note that $S \subset \wt{\mathcal{R}}_1$, for if $\prec \in S$ then $\eta_GQ(\wt{G}_1, \prec, z_{c_1}) = (G_1, c_1)$ (see Remark \ref{construction-remark}).  Moreover the orderings of $S$ are distinct upon restriction to any rank two abelian subgroup of $\wt{G}_1$, and so $\wt{\phi}_1^*S$ is infinite.  In particular, so is $\wt{\phi}_1^* \wt{\mathcal{R}}_1$. 

On the other hand, if we let $K = \langle x, y \mid xyx^{-1} = y^{-1} \rangle$ then $K$ admits exactly four left orderings, and all arise lexicographically from the short exact sequence
\[ 1 \rightarrow \langle y \rangle \rightarrow K \rightarrow \ZZ \rightarrow 1, 
\]
where the quotient is generated by the image of $x$.  Fixing a left ordering $<$ of $K$ with $y>1$, one can verify that there is an order-isomorphism $\Psi_2 : (G_2, c_2) \rightarrow Q(K, <, x^2) $ so that $K \cong \wt{G}_2$ by reasoning similar to the case of $G_1$.  In particular, because $K$ only has four left orderings, $|\wt{\phi}_2^* \wt{\mathcal{R}}| \leq 4$ and so $\wt{\mathcal{R}}_1$ and $\wt{\mathcal{R}}_2$ cannot be compatible with $\{ \wt{\phi}_1, \wt{\phi}_2\}$.  

This incompatibility arises from the fact that $\phi_2^*:H^1(G_2;\ZZ) \to H^1(H;\ZZ)$ cannot be surjective, since $G_2/G_2'$ is torsion and thus $H^1(G_2;\ZZ)$ is trivial while $H^1(H ;\ZZ)=H^1(\ZZ ;\ZZ)$ is infinite.  Note, however, that the required normal families of left orderings do exist and are constructed in the course of the proof of Proposition \ref{linear-amalgamation}.
\end{example}

Thus we ask the following question:

\begin{question}
Do there exist sufficient conditions on the groups $(G_i, c_i)$ which guarantee the existence of a circular ordering $c$ as in Theorem \ref{main-coro}(1), which make no reference to left orderings of the lifts $\widetilde G_i$?  In particular, is it possible to drop the surjectivity assumption on the first cohomology in Proposition \ref{bg-circ-back}?
\end{question}

\section{Special cases of amalgamation}
\label{special-cases}
Perhaps the most natural corollaries of the theorem of Bludov-Glass are that amalgamation of left-ordered groups along convex subgroups or along rank one abelian subgroups preserves left-orderability.  There are analogous results in the case of circularly-ordered groups, which we prove below.

Recall that a subgroup $H$ of a left-ordered group $(G, <)$ is \textit{convex} if whenever $h_1, h_2 \in H$ and $h_1 < g < h_2$ for some $g \in G$, then $g \in H$.  We recall the generalization to circularly-ordered groups.  Suppose that $H$ is a proper subgroup of a circularly-ordered group $(G, c)$.  Then $H$ is said to be \emph{convex} with respect to the circular ordering $c$ of $G$ if for every $g \in G \setminus H$, $f \in G$ and $h_1, h_2 \in H$, whenever $c(h_1, g, h_2) = 1$ and $c(h_2, f, h_1) = 1$ then $f \in H$ (this is in analogy with an established definition in the case of two-sided invariant circular orderings, see e.g. \cite{JP88}).  We first establish a few elementary results concerning convex subgroups in circularly-ordered groups, some of which appear in \cite{CMR17}.

\begin{lemma} \label{convex}
Suppose that $(G,c)$ is a circularly-ordered group and let $H \subset G$ be a subgroup with $|G:H| \geq 3$.  Then $H$ is convex if and only if the left cosets $G/H$ inherit a circular ordering $\ol{c} : (G/H)^3 \rightarrow \{ 0, \pm1 \}$ defined by $\ol{c}(g_1H, g_2H, g_3H) = c(g_1, g_2, g_3)$ whenever $g_1H, g_2H$, and $g_3H$ are distinct cosets.
\end{lemma}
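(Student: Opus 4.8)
The statement is a biconditional, and I read it with the understanding that $\ol{c}$ is set to $0$ on any triple of cosets that is not distinct; the real content is then that \emph{the defining formula is representative-independent} (whence, as I will indicate, the circular-order axioms come for free). Throughout I will use the standard fact that an orientation cocycle is alternating: $c$ is invariant under cyclic permutations of its arguments and changes sign under a transposition (this is immediate from Construction \ref{quotient} together with Proposition \ref{categorical-equivalence}, since every circular ordering arises as a $sign(\sigma)$). My first move is to reduce well-definedness to a single clean claim. To show $c(g_1h_1,g_2h_2,g_3h_3)=c(g_1,g_2,g_3)$ for distinct cosets it suffices, by the alternating property, to show that replacing one coordinate's representative changes nothing; and by left-invariance I may translate the first coset to $H$. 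Thus everything reduces to the claim $(\star)$: for $h\in H$ and $a,b\notin H$ with $aH\neq bH$, one has $c(h,a,b)=c(id,a,b)$.

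For the forward direction I assume $H$ is convex and prove $(\star)$ by contradiction. If $c(h,a,b)\neq c(id,a,b)$, the cocycle condition applied to $(id,h,a,b)$ gives $c(h,a,b)-c(id,a,b)=c(id,h,a)-c(id,h,b)$, so the right side is $\pm 2$ and $c(id,h,a),c(id,h,b)$ are nonzero and opposite; say $c(id,h,a)=1$ and $c(id,h,b)=-1$. Rewriting these with the alternating property yields $c(h,a,id)=1$ and $c(id,b,h)=1$ with $a\notin H$, which is exactly the hypothesis of convexity (with $h_1=h,\ h_2=id,\ g=a,\ f=b$) and therefore forces $b\in H$, contradicting $b\notin H$. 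Hence $(\star)$, and so $\ol{c}$ is well-defined. The remaining axioms are then routine: since distinct cosets have distinct representatives, $c$ is nonzero there, so $\ol{c}^{-1}(0)=\Delta(G/H)$; invariance of $\ol{c}$ is inherited directly from left-invariance of $c$; and for the cocycle condition one checks that when the four cosets are distinct it is precisely the cocycle condition for $c$ (using well-definedness to pick consistent representatives), while if two of the four coincide the four terms cancel in pairs.

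For the converse I assume only that $\ol{c}$ is well-defined and verify convexity. After normalizing $h_1=id$ by left-invariance, the data is $c(id,g,h_2)=1$ and $c(id,h_2,f)=1$ with $g\notin H$; I assume $f\notin H$ for contradiction. If $fH\neq gH$, then $H,gH,fH$ are distinct, and evaluating $\ol{c}$ on this triple with the two $H$-representatives $id$ and $h_2$ must agree; but the cocycle condition applied to $(id,g,h_2,f)$ and then to $(id,h_2,g,f)$ computes $c(id,g,f)=1$ and $c(h_2,g,f)=-1$, contradicting representative-independence. If instead $fH=gH$, I invoke $|G:H|\geq 3$ to choose a representative $k$ of a third coset; applying well-definedness to $(H,gH,kH)$ with $H$-representatives $id,h_2$ (and $gH$-representative $g$) forces $c(id,h_2,k)=c(id,h_2,g)=-1$, whereas applying it with the \emph{other} $gH$-representative $f$ forces $c(id,h_2,k)=c(id,h_2,f)=+1$, again a contradiction. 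Therefore $f\in H$ and $H$ is convex.

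The heart of the argument is the well-definedness claim $(\star)$, and the single genuinely delicate point is the converse case $fH=gH$: the two given relations live entirely in the two cosets $H$ and $gH$, where well-definedness imposes no constraint, so the contradiction cannot be obtained without importing an external coset. This is exactly where the hypothesis $|G:H|\geq 3$ is essential. By contrast, the verification of the remaining circular-order axioms for $\ol{c}$ is mechanical and carries no real obstruction.
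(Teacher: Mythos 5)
Your proof is correct, and it differs from the paper's in one substantive respect: the paper proves only the converse direction, citing \cite{CMR17} for the forward implication, whereas you give a self-contained argument for both. Your forward direction---reducing representative-independence, via the alternating property and left-invariance, to the single claim $(\star)$ that $c(h,a,b)=c(id,a,b)$ for $h\in H$ and $a,b\notin H$ with $aH\neq bH$, and then deriving $(\star)$ from convexity through the cocycle identity on $(id,h,a,b)$---is a clean substitute for the external citation, and the verification of the circular-ordering axioms for $\ol{c}$ is indeed routine once well-definedness is in hand. Your converse is essentially the paper's proof in contrapositive form: the paper likewise uses the cocycle condition to obtain $c(h_1,g,f)=1$ and $c(h_2,g,f)=-1$, concludes immediately when $gH\neq fH$, and imports a representative $t$ (your $k$) of a third coset when $gH=fH$, which is exactly where $|G:H|\geq 3$ enters; in that last case the paper splits on $c(h_1,t,h_2)$ versus $c(h_2,t,h_1)$, while you instead compute $c(id,h_2,k)$ in two ways, a slightly slicker but equivalent bookkeeping. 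Two small presentational points: the step ``applying well-definedness to $(H,gH,kH)$ forces $c(id,h_2,k)=c(id,h_2,g)$'' silently combines the equality $c(h_2,g,k)=c(id,g,k)$ supplied by well-definedness with one application of the cocycle identity to $(id,h_2,g,k)$ (and similarly with $f$ in place of $g$)---the step is correct but should be spelled out; and the alternating property of $c$ follows in two lines from conditions (1) and (2) of Definition \ref{cocycle-def} by setting two arguments equal in the cocycle identity, which is more direct than routing through Construction \ref{quotient} and Proposition \ref{categorical-equivalence}.
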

\begin{proof}
The forward direction is proved in \cite{CMR17}.  Conversely, suppose $H$ is not convex.  That is, there exists $h_1,h_2,g,f \in G$ such that $c(h_1,g,h_2) = 1$ and $c(h_2,f,h_1) = 1$ but $g,f \notin H$.  Then by the cocycle condition, $c(h_1,g,f) = 1$ and $c(h_2,g,f) = -1$.  If $gH$ and $fH$ are distinct cosets, then $\ol{c}$ is not well-defined and we are done.  Suppose not, and choose $t \in G\setminus H$ such that $tH \neq gH$.  If $c(h_1,t,h_2) = 1$, then applying the cocycle condition gives $c(h_1,t,f) = 1$ and $c(h_2,t,f) = -1$ so $\ol c$ is not well-defined.  Similarly, if $c(h_2,t,h_1) = 1$, then $c(h_1,g,t) = 1$ and $c(h_2,g,t) = -1$, completing the proof.
\end{proof}

\begin{lemma}
\label{it-is-a-cone}
Suppose that $H$ is a proper convex subgroup of the circularly-ordered group $(G, c)$.  Then the set 
\[ P = \{ h \in H \mid c(id, h, g) = 1 \mbox{ for some $g \in G \setminus H$} \}
\]
is the positive cone of a left ordering of $H$.  Moreover if $<$ is the left ordering corresponding to $P$, then for every $h_1, h_2, h_3 \in H$ we have $h_1 <h_2 <h_3$ if and only if $c(h_1, h_2, h_3) = 1$ (up to cyclic permutation of the arguments).
\end{lemma}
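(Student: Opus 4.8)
The plan is to show that the quantity $c(id,h,g)$ is independent of the choice of $g \in G \setminus H$, so that membership in $P$ is governed by a single well-defined sign function, and then to verify the positive-cone axioms by straightforward cocycle manipulation.

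The key step, and the one I expect to be the main obstacle, is the following claim: for each $h \in H \setminus \{id\}$ the value $c(id,h,g) \in \{\pm 1\}$ is the same for every $g \in G \setminus H$; call this common value $\epsilon(h)$. This is the only place convexity is genuinely used. If $c(id,h,g)=1$, then cyclically $c(h,g,id)=1$, so $g$ lies in the arc from $h$ to $id$; convexity applied with $h_1 = h$, $h_2 = id$ then forces the complementary arc $\{x : c(id,x,h)=1\}$ to lie entirely inside $H$, whence any other $g' \in G \setminus H$ must again satisfy $c(id,h,g')=1$. The symmetric argument treats the case $c(id,h,g) = -1$. With $\epsilon$ established, $P = \{h \in H \setminus \{id\} : \epsilon(h)=1\}$, and everything afterwards is formal.

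Next I would verify the positive-cone axioms. A short left-invariance computation, $c(id,h^{-1},g) = c(h,id,hg) = -c(id,h,hg)$ combined with $hg \in G \setminus H$, gives $\epsilon(h^{-1}) = -\epsilon(h)$; since $\epsilon$ takes values in $\{\pm 1\}$ this immediately yields $P \sqcup P^{-1} = H \setminus \{id\}$. For $P \cdot P \subset P$, take $h_1,h_2 \in P$ (noting $h_1 h_2 \neq id$, since otherwise $\epsilon(h_2) = \epsilon(h_1^{-1}) = -1$). Fixing any $g \in G \setminus H$ and applying the cocycle condition to the four-tuple $(id,h_1,h_1h_2,g)$ gives
\[
\epsilon(h_1 h_2) = c(h_1,h_1h_2,g) + c(id,h_1,g) - c(id,h_1,h_1h_2) = 2 - c(id,h_1,h_1h_2),
\]
where I used left-invariance to rewrite $c(h_1,h_1h_2,g) = c(id,h_2,h_1^{-1}g) = \epsilon(h_2) = 1$ and $c(id,h_1,g)=\epsilon(h_1)=1$. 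Since both $\epsilon(h_1h_2)$ and $c(id,h_1,h_1h_2)$ lie in $\{\pm 1\}$, the only solution is $\epsilon(h_1h_2)=1$, so $h_1 h_2 \in P$ and $P$ is a positive cone.

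Finally, for compatibility with $c$, I would first reduce by left-invariance to the case $h_1 = id$: writing $a = h_1^{-1}h_2$ and $b = h_1^{-1}h_3$ leaves both $c(h_1,h_2,h_3) = c(id,a,b)$ and the relative $<$-order of the triple unchanged, so it suffices to prove $id < a < b \Rightarrow c(id,a,b)=1$. The same cocycle identity applied to $(id,a,b,g)$ yields $c(id,a,b) = \epsilon(a) + \epsilon(a^{-1}b) - \epsilon(b)$; when $id < a < b$ all three signs equal $1$ (transitivity of the now-established order gives $id < b$), so $c(id,a,b)=1$. Since both $c$ and the function recording the sign of the $<$-sorting permutation are alternating in their three arguments, agreement on increasing triples propagates over each $S_3$-orbit of distinct triples, giving $c(h_1,h_2,h_3)=1$ exactly when $h_1 < h_2 < h_3$ up to cyclic permutation, as claimed.
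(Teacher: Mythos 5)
Your proof is correct, and it takes a genuinely different route from the paper's. The paper never isolates your key claim that $c(id,h,g)$ is independent of the choice of $g \in G \setminus H$: it keeps the existential quantifier in the definition of $P$ throughout and invokes convexity twice, in two separate contradiction arguments --- once for disjointness of $P$ and $P^{-1}$ (from $c(id,h,g)=1$ and $c(id,h^{-1},f)=1$ it extracts $c(h^{-1},id,h^{-1}g)=1$ and convexity forces $h^{-1}g \in H$, a contradiction), and once for $P \cdot P \subset P$ (assuming $c(id,g,hk)=1$, it combines this with $c(id,h,g)=1$ and $c(h,hk,hf)=1$ to produce a configuration that forces $hk$ or $g$ into $H$). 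You instead spend convexity exactly once, to make the sign function $\epsilon$ well defined, after which each axiom falls out of a single application of left-invariance or the four-term cocycle identity; your applications on the tuples $(id,h_1,h_1h_2,g)$ and $(id,a,b,g)$ have the correct signs under the paper's convention, and your degenerate-case exclusions ($h_1h_2 \neq id$, $hg \notin H$, $h_1^{-1}g \notin H$) are exactly the ones needed. Two things your organization buys: first, the identity $c(id,a,b) = \epsilon(a) + \epsilon(a^{-1}b) - \epsilon(b)$ gives a genuine proof of the final compatibility assertion, which the paper leaves as ``a straightforward check''; second, your $\epsilon$ encodes the same data as the function $d$ the paper introduces immediately after the lemma (for $h \neq id$ one has $\epsilon(h) = 1 - 2d(h)$) in order to show that the restriction of $f_c$ to $H$ is a coboundary, so your formulation dovetails with how the lemma is subsequently used. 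The paper's route is shorter on the page; yours localizes the use of convexity in one structural lemma and makes the remaining verification purely formal.
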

\begin{proof}
We first check that $P \sqcup P^{-1} \sqcup \{ id \} = H$.  To see this, let $h \in H$ with $h \neq 1$ be given and suppose $g \in G \setminus H$.  Then either $c(id, h, g) = 1$ yielding $h \in P$ or $c(id, g, h) = 1$, yielding $c(id, h^{-1}, h^{-1}g) = 1$ where $h^{-1}g \in G \setminus H$ and thus $h^{-1} \in P$. Therefore $H \subset P \cup P^{-1}$.   Second, suppose that $h \in P \cap P^{-1}$, so that there exist $f, g \in G \setminus H$ such that $c(id, h, g) = 1$ and $c(id, h^{-1}, f) = 1$.  But then, from the first equality, $c(h^{-1}, id, h^{-1}g)=1$.  But since $c(id, h^{-1}, f) = 1$ with $f \in G \setminus H$, by convexity this implies $h^{-1}g \in H$, a contradiction.

Next, to show that $P \cdot P \subset P$, suppose that $h, k \in H$ satisfy $c(id, h, g) =1$ and $c(id, k, f) = 1$ for some $g, f \in G \setminus H$.  If $c(id, hk, g)=1$ we are done, so suppose that $c(id, g, hk) =1$ and note that $c(h, hk, hf) = 1$ as well.  Combining $c(id, g, hk) =1$ and $c(id, h, g) =1$ we have $c(h, g, hk) =1$.  But now $c(h, g, hk) =1$ and $c(hk, hf, h) = 1$ imply that one of $hk$ or $g$ lies in $H$ by convexity, a contradiction.

That $h_1 <h_2 <h_3$ if and only if $c(h_1, h_2, h_3) = 1$ (up to cyclically permuting the arguments of $c$) is a straightforward check using the definition of $P$.
\end{proof}

Thus, when $H$ is a proper convex subgroup of $(G,c)$ we will say that $H$ is \textit{left-ordered by restriction}.  We call the left ordering of $H$ corresponding to the positive cone $P$ of Proposition \ref{it-is-a-cone} ``the left ordering of $H$ arising from the restriction of $c$". Note that for proper convex subgroups $H$ of $(G, c)$ this agrees with \cite[Definition 2.2]{CMR17}, where one says that $H$ is left-ordered by restriction if the set 
\[ Q = \{ h \in H \mid c(h^{-1}, id, h) =1 \}
\]
forms a positive cone.  One can verify that under the assumptions of Proposition \ref{it-is-a-cone}, we have $Q=P$.

We return to left-orderability of free products with amalgamation, and begin with an observation that tells us how convex subgroups behave with respect to the lifting construction.  Suppose that $H$ is a convex subgroup of $(G, c)$ with positive cone $P$ as in Proposition \ref{it-is-a-cone}.  Define a function $d: H \rightarrow \ZZ$ by $d(id) = 0$ and 
\[
   d(h)= 
\begin{cases}
    1& \text{if } h \notin P\\
    0 & \text{if } h \in P.
\end{cases}
\]

This function $d$ satisfies $f_c(g,h) = d(g) -d(gh)+d(h)$ for all $g,h \in H$, meaning that when $H$ is convex, the restriction of $f_c$ to $H$ is a coboundary.  Consequently $\widetilde{H}$ is a split central extension, with an explicit isomorphism $\phi : \ZZ \times H \rightarrow \widetilde{H}$ given by $\phi(n,h) = (n-d(h), h)$.  Via this isomorphism we can identify $\widetilde{H} \subset \widetilde{G}$ with $\ZZ \times H$, embedding $H$ as a subgroup of $\wt G$ and identifying the $\ZZ$ factor with $\langle z_c \rangle$.

One checks that the left ordering of $\ZZ \times H$ arising from the restriction of $<_c$ is lexicographic where $\mathbb{Z}$ is cofinal and $H \subset \wt G$ is equipped with the positive cone $P$ of Proposition \ref{it-is-a-cone}.\footnote{We will always distinguish the subgroup $H \subset G$ from the subgroup $H \subset \widetilde{G}$ by indicating the supergroup whenever confusion may arise.}  

\begin{lemma}
\label{lem-convex-lift}
Suppose that $(G,c)$ is a circularly-ordered group and that $H \subset G$ is a proper convex subgroup.  With notation as above, the image of the inclusion $\iota: H \rightarrow \widetilde{G}$ given by $\iota(h) = (-d(h), h)$ (which we will simply write as $H \subset \wt G$) is a convex subgroup relative to the left ordering $<_c$ of $\widetilde{G}$.
\end{lemma}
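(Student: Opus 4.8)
The plan is to verify convexity directly. Suppose $(m,a)\in\wt G$ satisfies $\iota(h_1) <_c (m,a) <_c \iota(h_2)$ for some $h_1,h_2\in H$; I must show $(m,a)\in\iota(H)$. Since $\iota$ is a homomorphism (it splits the central extension $\wt H$, as recorded before the statement), I first left-multiply by $\iota(h_1)^{-1}=\iota(h_1^{-1})$ and use left-invariance of $<_c$ to reduce to the case $id <_c g <_c \iota(h')$, where $g=\iota(h_1^{-1})(m,a)=(m',a')$ with $a'=h_1^{-1}a$ and $h'=h_1^{-1}h_2\in H$; note $a'\in H$ if and only if $a\in H$. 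Because $[id,z_c)$ consists exactly of the minimal representatives $\{(0,x)\mid x\in G\}$, and because $\iota(h')$ is itself such a representative (so $\iota(h')<_c z_c$), the relations $id<_c g$ and $g<_c\iota(h')<_c z_c$ force $g$ to be a minimal representative, i.e. $m'=0$ and $g=(0,a')$ with $a'\neq id$. Moreover, since $\iota$ is an order-isomorphism onto $\iota(H)$ carrying the positive cone $P\subset H$ of Lemma \ref{it-is-a-cone} to $\iota(H)\cap P_{<_c}$, the relation $\iota(h')>_c id$ gives $h'\in P$, whence $\iota(h')=(0,h')$. Reading the circular ordering off from the $<_c$-order of minimal representatives (Construction \ref{quotient}, together with the identification $\eta_G^*c=c_{<_c}$ of Lemma \ref{key-equivalence}), the chain $id<_c(0,a')<_c(0,h')$ of distinct minimal representatives translates into $c(id,a',h')=1$.

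The crux is to show $a'\in H$, and I argue by contradiction; suppose $a'\in G\sm H$. Since $h'\in P$, the definition of $P$ in Lemma \ref{it-is-a-cone} furnishes some $g_0\in G\sm H$ with $c(id,h',g_0)=1$, and cyclic invariance of the orientation cocycle gives $c(h',g_0,id)=1$. Now apply the defining property of convexity of $H$ in $(G,c)$ with the elements $h_1:=id$ and $h_2:=h'$ of $H$, with $a'\in G\sm H$ playing the role of the element strictly ``inside'', and with $f:=g_0$: from $c(id,a',h')=1$ and $c(h',g_0,id)=1$, convexity forces $g_0\in H$, contradicting $g_0\in G\sm H$. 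Hence $a'\in H$, so $a=h_1a'\in H$ and therefore $(m,a)\in\wt H$.

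Finally, with $(m,a)\in\wt H$ lying strictly between $\iota(h_1),\iota(h_2)\in\iota(H)\subset\wt H$, I invoke the lexicographic description of $<_c$ restricted to $\wt H$ recorded just before the statement: under the splitting $\wt H\cong\langle z_c\rangle\times\iota(H)$ the order is lexicographic with $\langle z_c\rangle$ dominant, so $\iota(H)$ is precisely the convex infinitesimal factor, and an element of $\wt H$ trapped between two elements of $\iota(H)$ must lie in $\iota(H)$. This gives $(m,a)\in\iota(H)$, completing the proof. I expect the genuine difficulty to sit in the middle paragraph: converting the purely order-theoretic betweenness in $\wt G$ into the single oriented triple $c(id,a',h')=1$, and then manufacturing from $h'\in P$ the second oriented triple $c(h',g_0,id)=1$ that is exactly what the convexity hypothesis on $H$ consumes. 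The containment of $g$ in a single fundamental domain and the lexicographic endgame on $\wt H$ are routine bookkeeping about minimal representatives and the splitting of $\wt H$.
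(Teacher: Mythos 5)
Your proof is correct and follows essentially the same route as the paper's: reduce by left-multiplication to $id <_c g <_c \iota(h')$, observe that everything in sight is a minimal representative $(0,\cdot)$ in the fundamental domain $[id,z_c)$, convert the chain into the single oriented triple $c(id,a',h')=1$, extract the witness in $G \setminus H$ from $h' \in P$, and feed both triples into the convexity hypothesis. The only differences are cosmetic: the paper applies the convexity condition directly (taking the cofinality witness as the element of $G \setminus H$ and concluding that the middle element lies in $H$) rather than by contradiction, and it implicitly finishes by noting $a' \in P$, so that $(0,a') = \iota(a')$, where you instead invoke the lexicographic convexity of $\iota(H)$ in $\wt{H}$.
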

\begin{proof}
Denote by $\tilde{h}, \tilde{g}$ arbitrary nonidentity elements of $H \subset \widetilde{G}$ and $\widetilde{G}$ respectively that project to the elements $h, g$ under the projection map $\widetilde{G} \rightarrow G$.  Note that neither of $h$, $g$ is the identity.  It suffices to check that under the assumption $id <_c \tilde{g} <_c \tilde{h}$, we have $\tilde{g} \in H$.

First note that $\tilde{h} \in H \subset \widetilde{G}$ implies $\tilde{h} =(0, h)$.  Then as $id <_c \tilde{g} <_c \tilde{h}$, we know $\tilde{g} = (0,g)$.  Thus $id <_c \tilde{g} <_c \tilde{h}$ implies $id <_c (0,g)^{-1}(0,h) = (f_c(g^{-1}, h)-1, g^{-1}h)$ which happens if and only if $c(id, g, h) =1$.

Now since $id <_c (0,h)$, we know that $h \in P$, where $P$ is the positive cone of Proposition \ref{it-is-a-cone}.  So there exists $x \in G \setminus H$ with $c(id, h, x) =1$.  Combining this with $c(id, g, h) =1$ forces $g \in H$ by convexity, so $(0, g) = \tilde g \in H \subset \wt G$.
\end{proof}

We are now ready to produce the required normal families needed to circularly order amalgamations along convex subgroups.

\begin{lemma}
\label{lem-lex-lifts-normal}
Suppose that $(G,c)$ is a circularly-ordered group and that $H \subset G$ is a proper convex subgroup.  Let $\mathcal{N} \subset \mathrm{LO}(\widetilde{G})$ denote the subset of left orderings of $\widetilde{G}$ that restrict to lexicographic orderings of  $\widetilde{H} \cong \ZZ \times H$ relative to which the $\mathbb{Z}$ factor is cofinal in $\widetilde{G}$.  Then $\mathcal{N}$ is normal.
\end{lemma}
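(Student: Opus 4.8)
The plan is to show directly that $\mathcal{N}$ is carried to itself by the $\widetilde{G}$-action $\prec\mapsto\prec^{w}$ on $\mathrm{LO}(\widetilde{G})$. First I would reduce the generality of $w$: since $z_c$ is central, conjugating a positive cone by a power of $z_c$ leaves it unchanged, so every element of $\widetilde{G}$ acts as some $\tilde g=(0,g)$ with $g\in G$, and it suffices to prove $\prec^{\tilde g}\in\mathcal{N}$. The defining condition has two parts—that $z_c$ be cofinal in $\widetilde{G}$, and that $\prec$ restrict to a lexicographic ordering of $\widetilde{H}\cong\ZZ\times H$ with the $\ZZ$ factor on top—and I would handle them separately. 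Cofinality is immediate: $z_c$ is fixed by conjugation, and the set of elements lying between two powers of $z_c$ is merely conjugated by $\tilde g$, so if it is all of $\widetilde{G}$ for $\prec$ it is all of $\widetilde{G}$ for $\prec^{\tilde g}$.

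The substance is the lexicographic condition, which I would push down to $G$. By Lemma \ref{normal-descends} the circular ordering that $\prec^{\tilde g}$ induces on $\ol{\widetilde{G}}\cong G$ is $c_{\prec^{\tilde g}}=(c_\prec)^{g}$, where $c_\prec$ is the ordering induced by $\prec$. I would then record the downstairs translation of membership in $\mathcal{N}$: because $H$ is convex the extension splits, $\widetilde{H}\cong\ZZ\times H$ with $\iota(H)$ a distinguished complement to $\langle z_c\rangle$, and under this splitting $\prec|_{\widetilde{H}}$ is lexicographic with $\ZZ$ cofinal precisely when $c_\prec|_{H}$ is induced by a left ordering of $H$. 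Thus the statement $\prec^{\tilde g}\in\mathcal{N}$ reduces to the claim that $(c_\prec)^{g}|_{H}$ is again induced by a left ordering of $H$.

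The one genuine obstacle is that $\widetilde{H}$ need not be normal in $\widetilde{G}$, since $H$ need not be normal in $G$: conjugation by $\tilde g$ carries $\widetilde{H}$ onto the lift of the conjugate $g^{-1}Hg$, which is typically \emph{not} convex, so the convexity hypothesis cannot simply be reapplied. The way around this is cohomological. Using the identity $f_{\phi^{*}c}=\phi^{*}f_c$ recorded after Construction \ref{lift} (with $\phi$ the inclusion $H\hookrightarrow G$), one has $[f_{(c_\prec)^{g}|_H}]=[f_{(c_\prec)^{g}}]|_{H}$ in $H^{2}(H;\ZZ)$, while Lemma \ref{ccc} gives $[f_{(c_\prec)^{g}}]=[f_{c_\prec}]$ in $H^{2}(G;\ZZ)$; restricting, $[f_{(c_\prec)^{g}|_H}]=[f_{c_\prec|_H}]$. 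Since $\prec\in\mathcal{N}$ forces $c_\prec|_H$ to be induced by a left ordering, this class vanishes, and I would conclude that $(c_\prec)^{g}|_H$ likewise lifts to a left ordering of $H$, so that $\prec^{\tilde g}$ restricts lexicographically to $\widetilde{H}$; together with cofinality this yields $\prec^{\tilde g}\in\mathcal{N}$ and hence normality. I expect the delicate point to be exactly this transfer across the non-normal subgroup, and in writing out the details the step that would require the most care is verifying that the vanishing of the restricted Euler class, in the presence of the splitting coming from convexity, is genuinely equivalent to—and not merely necessary for—the lexicographic restriction to $\widetilde{H}$.
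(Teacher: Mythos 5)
Your reduction to conjugation by $\tilde g = (0,g)$, your treatment of the cofinality of $z_c$, and your downstairs translation of the lexicographic condition into ``$c_\prec|_H$ is induced by a left ordering of $H$'' are all sound. But the final, crucial step fails: vanishing of $[f_{(c_\prec)^{g}|_H}]$ in $H^2(H;\ZZ)$ does \emph{not} imply that $(c_\prec)^{g}|_H$ is induced by a left ordering. A vanishing Euler class only says the central extension $\wt H_{(c_\prec)^g|_H} \to H$ splits as a group extension, i.e.\ that \emph{some} complement to $\langle z \rangle$ exists; linearity of the circular ordering requires moreover that a complement consist of non-cofinal elements of the lifted left ordering, and that is strictly stronger. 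A concrete counterexample: take $H = \ZZ$, so $H^2(\ZZ;\ZZ)=0$ and \emph{every} circular ordering of $\ZZ$ has vanishing class, yet the pullback of the standard circular ordering of $S^1$ under $n \mapsto e^{2\pi i n\alpha}$ with $\alpha$ irrational is induced by neither left ordering of $\ZZ$. The paper itself warns of this phenomenon in Remark \ref{derived-cocycle}: two circular orderings $c,d$ with $[f_c]=[f_d]$ can have lifts that are isomorphic as central extensions but \emph{not} as left-ordered groups. So the cohomology class forgets exactly the data (which lifted elements are cofinal) that distinguishes lexicographic restrictions, and the worry you flagged in your last sentence is fatal rather than a matter of care: since conjugation genuinely carries $H$ to a non-convex subgroup, no argument at the level of classes in $H^2(H;\ZZ)$ can recover linearity of $(c_\prec)^{g}|_H$.

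The paper's proof avoids the descent to $G$ and to cohomology entirely. It first observes that $\prec$ restricts lexicographically to $\wt H \cong \ZZ \times H$ with $\ZZ$ cofinal if and only if the non-$\prec$-cofinal elements of $\wt H$ are exactly those of the form $(0,h)$, and then shows that the set of $\prec$-cofinal elements of $\wt G$ is invariant under the whole conjugation action: if $h^k \succ z$ then $h^{2k} \succ z^2$, and choosing $j$ with $z^j \prec g \preceq z^{j+1}$, centrality of $z$ yields $z \prec g h^{2k} g^{-1}$, so cofinality (hence non-cofinality) is preserved in passing from $\prec$ to $\prec^{g}$. This tracks the cofinality data directly on elements, which is precisely what the passage through $[f_c]$ discards; replacing your Euler-class step with that direct computation repairs the argument.
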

\begin{proof}
Note that an ordering $<$ of $\widetilde{H} \cong \ZZ \times H$ is lexicographic with $\mathbb{Z}$ cofinal if and only if the only non-cofinal elements are exactly the elements of $H$ (i.e. elements in $\ZZ \times H$ of the form $(0,h)$). Thus to prove the lemma it suffices to check that for every $h \in \widetilde{G}$ if $h$ is $<$-cofinal then $h$ is $<^g$-cofinal for all $g \in G$.  We prove this claim next.

In an ordered group $(G, <)$ with positive cofinal central element $z$, the cofinal elements are
\[ \{ h \in G \mid \exists k \in \mathbb{Z} \mbox{ such that } h^k > z \}.
\]
So to prove our claim it suffices to show that for a given $h \in \wt G$, if $z< h^k$ for some $k  \in \mathbb{Z}$ then $z <^g h^{\ell}$ for some $\ell \in \mathbb{Z}$ and for every $g \in \widetilde{G}$.  

To this end, given $g, h \in \widetilde{G}$ as above, note that if $h^k >z$ then $h^{2k}>z^2$.  Also there exists $j \in \mathbb{Z}$ such that $z^j < g \leq z^{j+1}$, from which we calculate $z^{-(j+1)} \leq g^{-1} < z^j$.  Combining these two inequalities with $h^{2k}>z^2$ yields $z < gh^{2k}g^{-1}$, as needed.
\end{proof}

\linamalg*

\begin{proof}
Equip each group $\widetilde{G}_i$ with the normal family $\mathcal{N}_i \subset \mathrm{LO}(\widetilde{G}_i)$ of all orderings of $\widetilde{G}_i$ which restrict to lexicographic orderings of $\widetilde{H}_i \cong \ZZ \times H_i$, as in Lemma \ref{lem-lex-lifts-normal}.   By the remarks preceding Lemma \ref{lem-convex-lift}, $<_{c_i} \in \mathcal{N}_i$ for all $i$.  Moreover the lifts $\{ <_{c_i} \}_{i \in I}$ are compatible with $\{ \widetilde{\phi}_i \}_{i \in I}$, since the order isomorphisms $\phi_i^{-1} \phi_j : (H_i, c_i) \rightarrow (H_j, c_j)$ lift to order isomorphisms
\[
\tilde{\phi}_i^{-1} \tilde{\phi}_j : (\wt H_i, <_{c_i}, z_i) \rightarrow (\wt H_j, <_{c_j}, z_j).
\]

  We will show that $\{ \mathcal{N}_i \}_{i \in I}$ are compatible with $\{ \widetilde{\phi}_i \}_{i \in I}$.  To see this, it suffices to observe that every lexicographic ordering of $\widetilde{H}_i \cong \ZZ \times H_i$ with $\mathbb{Z}$ cofinal arises as the restriction of some ordering of $\widetilde{G}_i$:  This follows from Lemma \ref{lem-convex-lift}, which allows us to extend any left ordering of $H_i \subset \widetilde{G}_i$ to a left ordering of $\widetilde{G}_i$ since $H_i$ is $<_{c_i}$-convex.  Moreover since the generator of $\mathbb{Z}$ appearing in the direct product decomposition of $\widetilde{H}_i \cong \ZZ \times H_i$ is the cofinal central element of $\wt G$, restricting this extension ordering to $\wt H_i$ yields a lexicographic ordering, and so the order is in $\mathcal{N}_i$.   By Theorem \ref{bludov-glass}, Theorem \ref{main-coro}(2) holds.  Thus Theorem \ref{main-coro}(1) holds, completing the proof.
\end{proof}

This previous proposition is readily applicable in a special case relating to Question \ref{3manifold-problem}.

\begin{example}
\label{convex-groups-and-3-manifolds}  Suppose that $M$ is a compact, connected, orientable hyperbolic $3$-manifold with a single torus boundary component $T_M \subset \partial M$.  Let $\Delta_M$ denote the set of cusps in the universal cover of $M$, and note that there is an action of $\pi_1(M)$ on  $\Delta_M$ for which $\pi_1(T_M)$ is the stabilizer of a cusp.  By \cite[Lemma 2.11]{SSpreprint}, provided $M$ admits a certain nice triangulation, one can guarantee the existence of a unique circular ordering $d_M : \Delta_M^3 \rightarrow \{ 0, \pm 1 \}$ that is invariant under the $\pi_1(M)$-action.  By choosing any left ordering we please for the subgroup $\pi_1(T_M)$ and ordering $\pi_1(M)$ lexicographically, we arrive at a circular ordering $c_M$ of $\pi_1(M)$ such that $\pi_1(T_M)$ is convex.  

Let $M$ and $N$ be two $3$-manifolds as above and let $\psi :T_M \rightarrow T_N$ be any homeomorphism identifying their respective boundary tori.  Equip $\pi_1(M)$ and $\pi_1(N)$ with orderings $c_M$ and $c_N$ respectively where the orderings of $\pi_1(T_M)$ and $\pi_1(T_N)$ are chosen so that $\psi$ induces an order-isomorphism between the peripheral subgroups $\pi_1(T_M)$ and $\pi_1(T_N)$. Then as
\[
\pi_1(M \cup_{\psi} N) = \pi_1(M) * \pi_1(N) (\pi_1(T_M) \stackrel{\psi}{\cong} \pi_1(T_N))
\]
and $\psi$ is compatible with the orderings $c_M$ and $c_N$, we conclude that $\pi_1(M \cup_{\psi} N)$ is circularly-orderable with an ordering extending that of each of the factors, by Proposition \ref{linear-amalgamation}.
\end{example}

For the next proposition, we say that a subgroup $H$ of a left-ordered group $G$ with ordering $<$ is $<$-cofinal if there exists $h \in H$ that is $<$-cofinal.

\begin{proposition}
\label{cofinal-rank-two}
Suppose that $(G_i, <_i)$ are left-ordered groups with subgroups $H_i \subset G_i$, each equipped with an isomorphism $\phi_i : H \rightarrow H_i$ for all $i \in I$.  Suppose that $H \subset \mathbb{Q}^2$ is a rank two abelian subgroup and that $H_i$ is $<_i$-cofinal for all $i$ and that $\{<_i \}_{i \in  I}$ are compatible with $\{ \phi_i \}_{i \in I}$.  Then $*_{i \in I} G_i  (H_i \stackrel{\phi_i}{\cong} H)$ admits a left ordering that extends each of the $<_i$.
\end{proposition}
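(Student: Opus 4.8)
The plan is to verify the hypotheses of the Bludov--Glass criterion (Theorem \ref{bludov-glass}). Compatibility of $\{<_i\}_{i\in I}$ with $\{\phi_i\}_{i\in I}$ is part of the hypothesis, so the whole problem reduces to producing normal families $\mathcal{N}_i\subset\mathrm{LO}(G_i)$ with $<_i\in\mathcal{N}_i$ and with $\{\mathcal{N}_i\}_{i\in I}$ compatible with $\{\phi_i\}_{i\in I}$. Write $<_H=\phi_i^*<_i$ for the common restriction ordering on $H$, and let $h\in H$ be an element with $\phi_i(h)$ being $<_i$-cofinal in $G_i$ (such $h$ exists because $H_i$ is $<_i$-cofinal). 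Since $(H,<_H)$ has rank two, its maximal proper convex subgroup $C$ has rank at most one, so $C$ admits exactly two orderings; write $\mathrm{flip}(<_H)$ for the ordering of $H$ agreeing with $<_H$ on $H/C$ but with the order of $C$ reversed, and set $\Omega=\{<_H,\mathrm{flip}(<_H)\}\subset\mathrm{LO}(H)$ (a one-element set in the archimedean case $C=\{id\}$). The families will be
\[
\mathcal{N}_i=\{\prec\in\mathrm{LO}(G_i):H_i\text{ is }\prec\text{-cofinal and }\phi_i^*\prec\in\Omega\}.
\]

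The two facts that make this work are a cofinality-transfer lemma and a rigidity lemma, both proved by direct order manipulation. First, if $a$ is positive and $<$-cofinal in an arbitrary left-ordered group and $g$ is arbitrary, then $\{a^kg\}_{k\in\ZZ}$ is a $<$-increasing cofinal sequence, whence $g^{-1}ag>id$; applying this to $a$ and to $a^{-1}$ shows that positive cofinal elements remain positive and cofinal under every conjugation $<\mapsto<^g$. In particular $\prec$-cofinality of $H_i$ passes to $\prec^g$, so the cofinality condition defining $\mathcal{N}_i$ is conjugation-invariant. Second, I claim that for $H$ abelian and cofinal the restriction $\prec^g|_{H}$ has exactly the same cofinal elements, hence the same convex subgroup $C$ and the same induced order on $H/C$, as $\prec|_H$: the estimate above, applied to a difference $vu^{-1}$, shows $u\prec v\iff u\prec^g v$ whenever $vu^{-1}$ is positive cofinal, while the analogous estimate bounds the powers of any $a\in C$ above and below by $h$ in $\prec^g$. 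Consequently $\prec^g|_H$ differs from $\prec|_H$ at most by reversing the order of $C$, i.e. $\phi_i^*(\prec^g)\in\{\phi_i^*\prec,\mathrm{flip}(\phi_i^*\prec)\}$. Since $\Omega$ is closed under $\mathrm{flip}$, this shows each $\mathcal{N}_i$ is a normal family, and clearly $<_i\in\mathcal{N}_i$ since $\phi_i^*<_i=<_H\in\Omega$.

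It then remains to prove compatibility, i.e.\ $\phi_i^*\mathcal{N}_i=\Omega$ for every $i$ (so that all the $\phi_i^*\mathcal{N}_i$ coincide); the inclusion $\subseteq$ holds by definition and $<_H\in\phi_i^*\mathcal{N}_i$ via $<_i$, so the only point is to realise $\mathrm{flip}(<_H)$ as the restriction of some $\prec\in\mathcal{N}_i$. Here I would use that the convex closure $\widehat C$ of $C$ in $(G_i,<_i)$ is a proper convex subgroup (it cannot contain the cofinal element $\phi_i(h)$) meeting $H_i$ in exactly $C$; by the standard fact that the ordering of a convex subgroup may be reversed independently of the ordering of its coset space, replacing $<_i|_{\widehat C}$ by its reverse yields a left ordering of $G_i$ with unchanged coset ordering, hence with $H_i$ still cofinal, whose restriction to $H_i$ is precisely $\mathrm{flip}(<_H)$. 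This gives $\phi_i^*\mathcal{N}_i=\Omega$ for all $i$, and Theorem \ref{bludov-glass} then produces a left ordering of $*_{i\in I}G_i(H_i\stackrel{\phi_i}{\cong}H)$ extending each $<_i$. I expect the rigidity lemma controlling $\prec^g|_H$ to be the main obstacle, as it is exactly what forces the restriction orbits to agree across the factors; the reversal-of-a-convex-subgroup step is routine but must be checked to interact correctly with the rank-one subgroup $C$ and with cofinality.
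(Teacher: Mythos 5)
Your overall architecture is the same as the paper's: reduce to Theorem \ref{bludov-glass}, define $\mathcal{N}_i$ by constraining the restriction to $H_i$ (convexity of the rank-one subgroup $C$, cofinality of $H_i$), and prove normality via the fact that positive cofinal elements stay positive and cofinal under conjugation. Your ``rigidity lemma'' is correct and is essentially the estimate the paper invokes (c.f.\ the proof of Lemma \ref{lem-lex-lifts-normal}); your uniform treatment of the irrational-slope case (where $C=\{id\}$ and $\Omega$ is a singleton) is a mild streamlining of the paper, which handles that case separately by citing \cite[Corollary 5.8]{BV09}. Your two-element family $\Omega$ instead of the paper's four restriction orderings $O_i$ is also fine in principle, since your rigidity lemma shows the induced order on $H/C$ is conjugation-invariant.

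The gap is in the compatibility step, and it is exactly the point where the paper has to work hardest. You assert that the convex closure $\widehat{C}$ of $C$ in $(G_i,<_i)$ ``is a proper convex subgroup (it cannot contain the cofinal element $\phi_i(h)$).'' That parenthetical is circular: it is true that a \emph{proper} convex subgroup contains no cofinal element, but this only shows that \emph{if} $\widehat{C}$ is proper then $\phi_i(h)\notin\widehat{C}$; it does not show $\widehat{C}\neq G_i$. For a general left ordering there is no reason that a proper convex subgroup of $(G_i,<_i)$ containing $C$ exists at all: unlike the bi-ordered or Conradian setting, the set of elements bounded between elements of $C$ need not be a subgroup (the bound $g\leq c_2$, $g'\leq c_4$ gives no control on $gg'$ under left invariance alone), and the intersection of all convex subgroups containing $C$ can simply be $G_i$. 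Once $\widehat{C}$ fails to be proper, your reversal construction has nothing to reverse, and $\mathrm{flip}(<_H)$ need not be realized, so $\phi_i^*\mathcal{N}_i=\Omega$ is unproved. The paper avoids this by \emph{changing the ordering of $G_i$}: it lets $G_i$ act on the linearly ordered family $\mathcal{X}_i$ of $<_i$-downward-closed subsets, takes $S_i$ to be the stabilizer of the cut $X_i$ determined by $K_i$ (so $S_i\supseteq K_i$ and $S_i\cap H_i=K_i$), and uses the standard construction of a left ordering from an order-preserving action to produce a \emph{new} left ordering of $G_i$ in which $S_i$ \emph{is} convex; the four orderings obtained by reversing relative to $S_i$ then realize all the required restrictions to $H_i$. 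To repair your proof you would need to replace the convex-closure claim with this stabilizer construction (or an equivalent dynamical realization argument); everything else in your write-up goes through.
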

\begin{proof}
Since $\{<_i \}_{i \in  I}$ are compatible with $\{ \phi_i \}_{i \in I}$, there is an ordering $<$ (the pullback of $<_j$ along $\phi_j$ for any $j$) such that $\phi_i : (H,<) \rightarrow (H_i, <_i)$ is an order isomorphism for all $i$.
The ordering $<$ determines a line in $\mathbb{Q}^2$.  All elements to one side of the line positive, the elements to the other side negative.  Depending on whether or not this line has irrational slope, there are two cases:

First, if the line has irrational slope then every nonidentity element of $H_i$ is $<_i$-cofinal for every $i \in I$.  The result then follows from \cite[Corollary 5.8]{BV09}, since the sign of a cofinal element is preserved under conjugation---as in the proof of Lemma \ref{lem-lex-lifts-normal}.  

On the other hand, a line of rational slope can be dealt with as in the proof of \cite[Proposition 11.5]{BC17}. Suppose that in the restriction of $<_i$ to $H_i$, the rank one subgroup $K_i$ is convex.  For each $i$ define $\mathcal{N}_i$ to be the collection of all left orderings of $G_i$ which restrict to $H_i$ in such a way that $K_i$ is convex.  Note that $<_i \in \mathcal{N}_i$ by definition.  Because all elements of $H_i$ which are not in $K_i$ are $<_i$-cofinal, the family $\mathcal{N}_i$ of orderings is normal (c.f. the proof of Lemma \ref{lem-lex-lifts-normal}).   We show compatibility with $\{ \phi_i \}_{i \in I}$ as follows.

Having fixed $K_i \subset H_i$ as above, there are exactly four left orderings of $H_i$ which realize $K_i$ as a convex subgroup.  They are those that arise lexicographically from the short exact sequence
\[ 1 \rightarrow K_i \rightarrow H_i \rightarrow H_i / K_i \rightarrow 1
\]
where the kernel and image are both rank one abelian.  Call this collection $O_i \subset \mathrm{LO}(H_i)$.   There is a restriction map $r_i: \mathrm{LO}(G_i) \rightarrow \mathrm{LO}(H_i)$, and if the condition $r_i(\mathcal{N}_i) = O_i$ holds then the families $\mathcal{N}_i$ will be compatible $\{ \phi_i \}_{i \in I}$, as in  \cite[Proposition 11.5]{BC17}.  

To see that this condition holds, for each $i$, the family of sets
\[
\mathcal{X}_i = \{ S \subset G_i \mid x \in S \mbox{ and } y<x \Rightarrow y\in S \}
\]
is ordered by inclusion, and the natural left-action of $G_i$ on $\mathcal{X}_i$ preserves this order.  Let $S_i$ denote the stabilizer of the set $X_i = \{ x \in G_i \mid x < g \mbox{ for some } g \in K_i \}$, and note that $S_i \cap H_i = K_i$.

Via the usual method of constructing a left ordering on $G_i$ from an order-preserving action on a linearly ordered set $\mathcal{X}_i$, we can construct a left ordering of $G_i$ relative to which $S_i$ is convex.  Using the convex subgroup $S_i$, we can order $G_i$ in four distinct ways, such that each of the four orderings lies in $\mathcal{N}_i$ and the restriction of each to $H_i$ is distinct.  We conclude $r_i(\mathcal{N}_i) = O_i$, and compatibility holds.
\end{proof}

\cycamalg*

%
\begin{proof}
In either case, $\{ <_{c_i} \}_{i \in I}$ are compatible with $\{ \widetilde{\phi}_i \}_{i \in I}$.  With this in hand we consider the cases separately.

If $(H,d)$ is a subgroup of the rational points of $S^1$, then the lift $\widetilde{H}$ is a subgroup of $\mathbb{Q}$.  Consequently $*_{i\in I} \widetilde{G}_i (\widetilde{H}_i \stackrel{\widetilde \phi_i}{\cong}\widetilde{H})$ is a free product of left-ordered groups with amalgamation along rank one abelian subgroups, and thus admits a left ordering extending the ordering of each of the factors \cite[Corollary 5.3]{BV09}.  By Theorem \ref{main-coro}, $*_{i \in I} G_i  (H_i \stackrel{\phi_i}{\cong} H)$ admits a circular ordering that extends each of the $c_i$.

Now suppose $(H,d)$ is $\QQ$ or $\ZZ$ with the ordering above, then the lifts $\wt H_i$ are isomorphic to $H_i \times \ZZ$, each left-ordered so that the $\mathbb{Z}$ factor is $<_{c_i}$-cofinal.  By Proposition \ref{cofinal-rank-two},
\[
*_{i\in I} \widetilde{G}_i (\widetilde{H}_i \stackrel{\widetilde \phi_i}{\cong}\widetilde{H})
\]
is left-orderable by an ordering extending each of the lifted orderings $<_{c_i}$. By Theorem \ref{main-coro}, $*_{i \in I} G_i  (H_i \stackrel{\phi_i}{\cong} H)$ admits a circular ordering that extends each of the $c_i$.
\end{proof}

\begin{example}
 \label{CO-cyclic-amalgamation-failure}
Note that Proposition \ref{cyclic-amalgamation} does not imply that amalgamating circularly-orderable groups along cyclic subgroups yields a circularly-orderable result, unlike the case of left-orderable groups.  The primary difference is that if $H_i$ are subgroups of left-orderable groups $G_i$ for $i \in I$, and $\phi_i : H \rightarrow H_i$ are isomorphisms, then there is always a choice of left orderings on $H$ and the groups $G_i$ such that every map $\phi_i : H \rightarrow H_i$ is order-preserving. 

In contrast, suppose that $G_1, G_2$ are both isomorphic to $\mathbb{Q}/\mathbb{Z}$, and thus each admits precisely two circular orderings---the restriction of the standard circular ordering of $S^1$, and its reverse (the circular ordering obtained by multiplying the standard ordering by $-1$).  Fix a prime $p \geq 5$ and let $\phi_i : \ZZ/p\ZZ \rightarrow \QQ/\ZZ$ denote the map determined by the assignment $\phi_i(1) = \frac{i}{p}+ \ZZ$ for $i=1,2$.  Let $H_1, H_2$ denote the copies of $\ZZ/p\ZZ$ contained in $G_1$ and $G_2$ that are generated by the image of $\frac{1}{p}$ under the quotient $\QQ \rightarrow \QQ/\ZZ$, and consider the free product with amalgamation $G_1 * G_2(H \stackrel{\phi_i}{\cong} H_i)$.  This group is not circularly-orderable, as there are no circular orderings on $H$, $G_1$ and $G_2$ that make $G_1 \stackrel{\phi_1}{\leftarrow} H \stackrel{\phi_2}{\rightarrow} G_2$ into an amalgamation diagram in $\Circ$.  

At first blush this may appear at odds with our main theorem, as $\widetilde G_i \cong \QQ$, $\wt H_i \cong \ZZ$ and so for any choice of isomorphisms $\wt \phi_i: \ZZ \rightarrow H_i$ the group $\wt G_1*\wt G_2(\ZZ \stackrel{\wt \phi_i}{\cong} \wt H_i)$ will be left-orderable by \cite[Corollary 5.2]{BV09}.  The key observation, however, is that any identification of $\QQ$ with $\wt G_i$ implicitly involves making a choice $q_i \in \QQ$ of cofinal central element, and the diagram $\wt G_1 \stackrel{\wt \phi_1}{\leftarrow} \ZZ \stackrel{\wt \phi_2}{\rightarrow} \wt G_2$ will only pass to an amalgamation diagram in $\Circ$ if $\wt \phi_2 \wt\phi_1^{-1}(q_1) = q_2$.  Imposing this condition on the maps $\wt\phi_i$ means precisely that upon passing to quotients, the standard circular orderings of $H_i \subset G_i$ will be compatible with $\phi_2 \phi_1^{-1}$---in particular, the setup of the previous paragraph can never arise as such a quotient.
\end{example}

\section{$\mathbf{Circ}$ as a tensor category}
\label{tensor-cat}

This section explores ideas first put forward by Rolfsen \cite{Rolfsen17}. We show how a certain explicit construction of a circular ordering on the free product produces a tensor structure on $\Circ$. 

We begin by reviewing a construction from \cite{BS15} which provides an explicit circular ordering of the free product $G*H$.  Let $(G, c_G)$ and $(H_, c_H)$ be circularly-ordered groups.  First, we define what it means for a triple $(w_1, w_2, w_3) \in (G*H)^3$ to be \textit{reduced}.  Consider the following three reduction operations that one can perform on such a triple:

\begin{enumerate}
\item Suppose $x \in G \cup H$ is the leftmost letter of all three words $w_1, w_2, w_3$.   That is, $w_1 = xw_1'$ and $w_2 = xw_2'$ and $w_3 = xw_3'$, and all of the $xw_i'$'s are reduced words.  In this case, replace $(w_1, w_2, w_3)$ with $(w_1', w_2', w_3')$. 
\item Suppose $x \in G \cup H$ appears as the leftmost letter in exactly two of the words $\{ w_1, w_2, w_3 \}$.  Then left-multiply the triple by $x^{-1}$.  The word $w_i$ which does not have $x$ as its leftmost letter is thus replaced with $x^{-1}w_i$ in the triple $(w_1, w_2, w_3)$, which may not be a reduced word.  Thus to complete the operation, we reduce $x^{-1}w_i$.
\item Suppose $x \in G \cup H$ is the leftmost letter of exactly one of $\{ w_1, w_2, w_3 \}$, say $w_i$.  Then replace $w_i$ with $x$.
\end{enumerate}

Call a triple $(x,y,z) \in   (G*H)^3$ a \textit{reduction} of $(w_1, w_2, w_3) \in (G*H)^3$ if one can arrive at $(x,y,z)$ starting from $(w_1, w_2, w_3)$ by performing a series of the moves (1)--(3) above, and if no further moves can be performed on the triple $(x,y,z)$ we call it a \textit{minimal reduction}.  By \cite[Proof of Theorem 4.3]{BS15} every triple $(w_1, w_2, w_3) \in (G*H)^3$ admits a unique minimal reduction.  Moreover, if $(x,y,z)$ is a minimal reduction of $(w_1, w_2, w_3) \in (G*H)^3$, then either exactly two of $\{x, y, z \}$ lie in $G$ while the other is in $H$, or exactly two of $\{x, y, z \}$ lie in $H$ while the other is in $G$, or $\{x, y, z \} \subset H$, or $\{x, y, z \} \subset G$.   That is, the minimal reduction always lies in $(G \cup H)^3$.

We are now ready to state the result of \cite[Theorem 4.3]{BS15}, which defines a circular ordering of the free product of two circularly-ordered groups $(G, c_G)$ and $(H, c_H)$.  Define $c: (G*H)^3 \rightarrow \{ \pm 1, 0\}$ according to the rules:
\begin{enumerate}
\item We insist that $c$ is invariant under cyclic permutation of its arguments, and that $c(g, h, id ) = +1$ and $c(h,g,id) =-1$ for all $g \in G \setminus \{ id \}$ and $h \in H \setminus \{ id \}$.
\item On $G^3$ and $H^3$, define $c$ by $c|_{G^3} = c_G$ and $c|_{H^3} = c_H$.
\item Define $c(g_1, g_2, h) = c_G(g_1, g_2, id)$ and $c(g, h_1, h_2) = c_H(id, h_1, h_2)$ for all $g, g_1, g_2 \in G\setminus \{ id \}$ and $h, h_1, h_2 \in H\setminus \{ id \}$.  Use (1) to extend this to all of $(G \cup H)^3$.
\item If $(x,y,z)$ is the minimal reduction of $(w_1, w_2, w_3) \in (G*H)^3$ then 
\[ c(w_1, w_2, w_3) = c(x,y,z).
\]
\end{enumerate}
Such a $c$ exists and is uniquely determined by these conditions.

We are now ready to state our result, which mirrors a result of Rolfsen \cite[Theorem 1]{Rolfsen17} in the case of left- and bi-orderability of free products.  It also further illustrates the necessity of restricting our attention to injective homomorphisms in our definition of $\Circ$.

For the next proposition, a {\it faux order-preserving homomorphism} $\phi:(G,c) \to (H,d)$ is a homomorphism such that $|c(g_1,g_2,g_3) - d(\phi(g_1),\phi(g_2),\phi(g_3))| \leq 1$ for all $g_1,g_2,g_3 \in G$.  Such a homomorphism has the property that $c(g_1,g_2,g_3) = d(\phi(g_1),\phi(g_2),\phi(g_3))$ if $(\phi(g_1),\phi(g_2),\phi(g_3)) \notin\Delta(H)$, so it is the appropriate definition of order-preserving while allowing for non-injective homomorphisms.

\begin{proposition}
\label{functor-theorem}
Suppose that $(G_i, c_i)$ and $(H_i, d_i)$ are circularly-ordered groups for $i=1, 2$, and let $(G_1*G_2, c)$ and $(H_1*H_2, d)$ denote the free products with circular orderings constructed as above.  
\begin{enumerate}
\item  If $\phi_i : (G_i, c_i) \rightarrow (H_i, d_i)$ are order-preserving homomorphisms, then the homomorphism $\phi_1 * \phi_2 : (G_1 *G_2, c) \rightarrow (H_1 * H_2, d)$ is order-preserving.
\item If one of $\phi_i : (G_i, c_i) \rightarrow (H_i, d_i)$ is a non-injective faux order-preserving homomorphism, then the homomorphism $\phi_1 * \phi_2 : (G_1 *G_2, c) \rightarrow (H_1 * H_2, d)$ is not a faux order-preserving homomorphism.
 \end{enumerate}
\end{proposition}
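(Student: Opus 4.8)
The plan is to handle the two parts separately, with the shared theme that everything is governed by how $\Phi := \phi_1 * \phi_2$ interacts with the minimal-reduction procedure defining $c$ and $d$.

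For part (1), recall that order-preserving homomorphisms are injective, so $\phi_1,\phi_2$ are injective. The first step is to show that $\Phi$ is compatible with the reduction procedure. Because each $\phi_i$ is injective, it carries each syllable of a reduced word to a nonidentity syllable of the same factor, so $\Phi$ sends reduced words to reduced words and distinct leftmost letters to distinct leftmost letters. Hence each of the three reduction moves applies to a triple $(w_1,w_2,w_3)$ exactly when the corresponding move (with $x$ replaced by $\Phi(x)$) applies to $(\Phi w_1,\Phi w_2,\Phi w_3)$; here one uses that $\Phi$ is a homomorphism, so it commutes with the left-multiplication in move (2), and that injectivity prevents any spurious cancellation. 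By the uniqueness of minimal reductions, if $(x,y,z)$ is the minimal reduction of $(w_1,w_2,w_3)$ then $(\Phi x,\Phi y,\Phi z)$ is the minimal reduction of $(\Phi w_1,\Phi w_2,\Phi w_3)$. Since $c$ and $d$ are read off their minimal reductions, it suffices to verify $c(x,y,z)=d(\Phi x,\Phi y,\Phi z)$ for $(x,y,z)\in(G_1\cup G_2)^3$. This is a short case check over the possible types of a minimal reduction (all three syllables in one factor; two in one factor and one in the other; or a triple of the form $(g,h,\id)$): in each case the value of $c$ is dictated, up to cyclic permutation, by one of the defining rules together with $c_i$ on a factor, and injectivity plus order-preservation of the $\phi_i$ on the factors gives the matching value of $d$.

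For part (2), I would produce a single triple witnessing a discrepancy of $2$, exploiting the asymmetry $c(g,h,\id)=+1\ne -1=c(h,g,\id)$ for $g\in G_1\setminus\{\id\}$, $h\in G_2\setminus\{\id\}$. Assume without loss of generality that $\phi_1$ is the non-injective (faux order-preserving) map, and choose $g\in\ker\phi_1\setminus\{\id\}$. The key mechanism is that, since $\phi_1(g)=\id$, applying $\Phi$ to a word with leading syllable $g$ \emph{deletes} that syllable, so the leading syllable of the image can move from the first factor $H_1$ to the second factor $H_2$; this is exactly the move that flips the value produced by the asymmetric rule. Concretely, pick $a\in G_2$ with $\phi_2(a)\ne\id$ and $p\in G_1$ with $\phi_1(p)\ne\id$. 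The triple $(g,a,p)$ is already minimal and cyclically equals $(p,g,a)$, so $c(g,a,p)=c_1(p,g,\id)$, whereas its image $(\id,\phi_2(a),\phi_1(p))$ is minimal and, being of the form $(\text{second factor},\text{first factor},\id)$ up to cyclic permutation, has $d$-value $-1$. If $c_1(p,g,\id)=+1$ this triple has discrepancy $2$; if instead $c_1(p,g,\id)=-1$, the mirrored triple $(p,ga,\id)$ reduces to $(p,g,\id)$ with $c$-value $-1$ while its image $(\phi_1(p),\phi_2(a),\id)$ has $d$-value $+1$, again a discrepancy of $2$. Either way $\Phi$ is not faux order-preserving.

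The step I expect to be the main obstacle is verifying, in part (2), that the minimal reduction of the \emph{image} triple genuinely carries the opposite sign — i.e. confirming that deleting the kernel syllable truly changes the factor-type of the leading letter, and hence the applicable defining rule, rather than merely creating a degenerate repeated entry that would only drop the discrepancy to $1$. This is also where the hypotheses must be used with care: the construction above requires both $\phi_1$ and $\phi_2$ to be nontrivial so that the surviving syllables $\phi_1(p)$ and $\phi_2(a)$ exist and are distinct from $\id$. The degenerate situations (one map trivial, or a factor too small to orient) should be dispatched by small variant triples; for instance $(ga,b,\id)$ with $a,b\in G_2$ chosen so that $(\phi_2(a),\phi_2(b),\id)$ is negatively oriented in $H_2$ gives discrepancy $2$ whenever $\phi_2(G_2)$ contains two distinct nonidentity elements. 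Finally, one should note that the same reduction bookkeeping underlying part (1) certifies all of these image computations, so the two parts rest on a single technical core.
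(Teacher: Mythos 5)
Your proposal is correct and takes essentially the same approach as the paper: part (1) is proved by the identical observation that injectivity makes $\phi_1 * \phi_2$ commute with the minimal-reduction procedure, followed by the same case check over types of minimal reductions, and part (2) exploits exactly the paper's mechanism, namely that a kernel element deletes a syllable so that the asymmetric rule $c(g,h,\id)=+1$, $c(h,g,\id)=-1$ yields a discrepancy of $2$. The only cosmetic difference is in part (2), where you case-split on the sign and fall back on a mirrored second triple, while the paper normalizes its choice inside the non-injective factor via left-invariance (replacing $g_2,g_3$ by $g_3^{-1}g_2,g_3^{-1}$ to force the $c$-value to be $-1$ outright); the nondegeneracy point you flag is likewise present, but unremarked, in the paper, which picks an element of the other factor not mapped to the identity without comment.
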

\begin{proof}
Set $\phi = \phi_1*\phi_2$ and let $(w_1, w_2, w_3) \in (G_1*G_2)^3$ be given, suppose both $\phi_1$ and $\phi_2$ are order-preserving.  First note that if $(x, y, z) \in (G_1 \cup G_2)^3$ is the minimal reduction of $(w_1, w_2, w_3) \in (G_1*G_2)^3$, then $(\phi(x), \phi(y), \phi(z))$ is the minimal reduction of $(\phi(w_1), \phi(w_2), \phi(w_3))$.  Indeed, this follows from the observation that since $\phi$ is injective, the triple $(w_1, w_2, w_3)$ admits an operation of type (1), (2) or (3) if and only if the triple $(\phi(w_1), \phi(w_2), \phi(w_3))$ admits an operation of the same type. Thus 
\[
d(\phi(w_1), \phi(w_2), \phi(w_3)) = d(\phi(x), \phi(y), \phi(z)),
\]
and we consider cases. 

\noindent \textbf{Case 1.} There exists $i$ such that $\{ \phi(x), \phi(y), \phi(z) \} \subset H_i$.  Then 
\[d(\phi(x), \phi(y), \phi(z)) = d_i(\phi_i(x), \phi_i(y), \phi_i(z)) = c_i(x, y, z) = c(w_1, w_2, w_3)
\]
and so $d(\phi(w_1), \phi(w_2), \phi(w_3)) = c(w_1, w_2, w_3)$.

\noindent \textbf{Case 2.1}  Two of $\{ \phi(x), \phi(y), \phi(z) \}$ are contained in $H_1\setminus \{ id \}$, the other is contained in $H_2 \setminus \{ id \}$.  Without loss of generality suppose that $\phi(x), \phi(y) \in H_1$ and $\phi(z) \in H_2$.    Then 
\[ d(\phi(x), \phi(y), \phi(z)) = d_1(\phi_1(x), \phi_1(y), id) = c_1(x, y, id) = c(x, y, z).
\]
Thus $d(\phi(w_1), \phi(w_2), \phi(w_3)) = c(w_1, w_2, w_3)$.

\noindent \textbf{Case 2.2.} Two of $\{ \phi(x), \phi(y), \phi(z) \}$ are contained in $H_2\setminus \{ id \}$, the other is contained in $H_1 \setminus \{ id \}$.  Proceed as in the previous case.

\noindent \textbf{Case 3.}  One of $\{ \phi(x), \phi(y), \phi(z) \}$ is equal to the identity, one lies in $H_1$, and the other in $H_2$.    Without loss of generality suppose that $\phi(x) \in H_1, \phi(y) \in H_2$, and $\phi(z) =id$, any other combination can be dealt with via cyclic permutation or appropriate change of sign.   As each of $\phi_i$ is injective, we know $z=id$.  Thus we calculate
\[ d(\phi(x), \phi(y), \phi(z) ) = d(\phi(x), \phi(y), id) = +1,
\]
and
\[ c(x, y, z) = c(x, y, id) = +1.
\]
Thus $d(\phi(w_1), \phi(w_2), \phi(w_3)) = c(w_1, w_2, w_3)$, proving (1).

To prove (2), suppose that $\phi_2$ is a non-injective faux order-preserving homomorphism (the case of a non-injective $\phi_1$ being similar).  Choose elements $g_2, g_3 \in G_2\sm\{id\}$ with $\phi_2(g_3) = id$, $\phi_2(g_2) \neq id$ and $c_2(id, g_2, g_3) = -1$.   Such a choice of $g_2$ and $g_3$ is always possible:  suppose that initially, one chooses $g_2$ and $g_3$ with $\phi_2(g_3) = id$ and $\phi_2(g_2) \neq id$ yet they satisfy $c_2(id, g_2, g_3) = +1$.  It is easy to check, using left invariance, that $c_2(id, g_3^{-1}g_2, g_3^{-1}) = -1$, so replacing $g_2$ with $g_3^{-1}g_2$ and $g_3$ with $g_3^{-1}$ yields a choice of $g_2$ and $g_3$ which meets our requirements.
 
Let $g_1 \in G_1$ be any element which is not mapped to the identity by $\phi_1$.  Then 
\[
c(g_1, g_2, g_3) = c_2(id, g_2, g_3) = -1.
\]
On the other hand, applying $\phi = \phi_1 * \phi_2$ to the entries of the triple $(g_1, g_2, g_3)$ we arrive at $(\phi_1(g_1), \phi_2(g_2), id)$ and compute
\[ d(\phi_1(g_1), \phi_2(g_2), id)  = +1.
\]
Thus $\phi$ is not faux order-preserving.
  \end{proof}

In a more sophisticated language, Proposition \ref{functor-theorem} establishes that the map
\[
\otimes : \mathbf{Circ} \times\mathbf{Circ} \rightarrow \mathbf{Circ}
\]
defined by $(G_1, c_1) \otimes (G_2, c_2) = (G_1*G_2, c)$ yields a bifunctor, while the same recipe for constructing circular orderings of free products does \textit{not} yield a bifunctor if we do not insist on injectivity of the maps in $\Circ$. 

\begin{theorem}
Equipped with the bifunctor $\otimes$ the category $\mathbf{Circ}$ becomes a tensor category.
\end{theorem}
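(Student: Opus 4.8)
The plan is to exhibit the data of a monoidal structure on $\Circ$---a unit object, left and right unitors, and an associator---and then to verify the coherence axioms. For the unit object I would take the trivial group $E = (\{id\}, c_0)$, where $c_0$ is the forced circular ordering with $c_0^{-1}(0) = \Delta(\{id\})$; this is a legitimate object of $\Circ$. Since $E * G = G * E = G$ as groups and $E$ contributes no non-identity letters, the minimal reduction of any triple in $(E * G)^3$ already lies in $G^3$, so the circular ordering produced by the construction of \cite{BS15} on $E \otimes G$ is exactly $c_G$. Hence the canonical group isomorphisms $\lambda_G : E \otimes G \to G$ and $\rho_G : G \otimes E \to G$ are order-preserving, and furnish the unitors.

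The substantive step is the associator. For circularly-ordered groups $(G_i, c_i)$, $i = 1,2,3$, there is a canonical group isomorphism $a : (G_1 * G_2) * G_3 \to G_1 * (G_2 * G_3)$ fixing each factor, and I must check that it carries the ordering $c_{(12)3}$ (built from $(G_1 * G_2, c_{12})$ and $(G_3, c_3)$) to the ordering $c_{1(23)}$. My approach would be to compare both iterated orderings to the single circular ordering $c_{123}$ obtained by applying the construction of \cite{BS15} directly to the family $\{G_1, G_2, G_3\}$; the construction, together with uniqueness of minimal reductions, extends verbatim to an arbitrary indexed family, as is already used elsewhere in the paper. Concretely, I would show that a reduced word of $(G_1 * G_2) * G_3$ groups uniquely into maximal $\{G_1, G_2\}$-blocks (each an element of $G_1 * G_2$) separated by $G_3$-letters, that this grouping intertwines the two-factor reduction moves (1)--(3) with the three-factor moves, and that evaluating $c_{(12)3}$ on a two-factor minimal reduction---unwinding the inner invocation of $c_{12}$ via its own minimal reduction---reproduces the flat evaluation of $c_{123}$. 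Symmetrically $c_{1(23)}$ agrees with $c_{123}$, whence $a^* c_{1(23)} = c_{(12)3}$ and $a$ is a morphism of $\Circ$.

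Once the associator and unitors are available as isomorphisms in $\Circ$, naturality and the pentagon and triangle identities follow almost formally. The key observation is that $\Circ$ admits a faithful forgetful functor to the category of groups (as noted in the proof of Proposition \ref{categorical-equivalence}), and that each structure isomorphism is, by construction, the image of the corresponding canonical isomorphism of free products of groups. Since free product is the coproduct in the category of groups and thereby endows it with a monoidal structure, the squares expressing naturality and the pentagon and triangle diagrams already commute at the level of underlying group homomorphisms; faithfulness of the forgetful functor then lifts these commuting diagrams verbatim to $\Circ$. Naturality of $\lambda$, $\rho$, and $a$ with respect to morphisms follows the same way, using functoriality of $\otimes$ established in Proposition \ref{functor-theorem}.

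I expect the associativity check to be the main obstacle: the two bracketings use genuinely different notions of ``letter'' and ``reduced word,'' and reconciling the nested invocations of $c_{12}$ (respectively $c_{23}$) inside the outer construction with the single flat construction requires careful bookkeeping on how minimal reductions respect the block decomposition. Everything else is either immediate (the unit and its unitors) or a formal consequence of working over a faithful functor to the category of groups (all coherence).
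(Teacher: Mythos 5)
Your proposal is correct in substance, but it organizes the two nontrivial steps---associativity and coherence---differently from the paper. For associativity, the paper never introduces a flat three-factor ordering: it compares $c_{(1,2),3}$ and $c_{1,(2,3)}$ directly, first reducing an arbitrary triple by left-multiplication to the form $(g_1v_1,g_2v_2,g_3v_3)$ with distinct leading letters $g_i \in G_1 \cup G_2 \cup G_3$, so that only type-(3) moves apply, and then running an explicit case analysis (all $g_i$ in distinct factors, all in one factor, exactly two in one factor) showing both nested orderings evaluate identically. Your route---proving each nested ordering equals a single flat ordering $c_{123}$ on $G_1*G_2*G_3$---is sound and buys more: it yields $n$-fold associativity uniformly and makes the pentagon essentially automatic. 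But note two caveats. First, the multi-factor construction is \emph{not} set up in this paper (only the two-factor case of \cite{BS15} is reviewed, and the base cases for a family require choosing a circular ordering of the index set), so you must either cite the general statement in \cite{BS15} or redo existence and uniqueness of minimal reductions for families; your claim that this ``is already used elsewhere in the paper'' is inaccurate. Second, rather than chasing block decompositions through the moves, it is cleaner to verify that the flat ordering satisfies the four characterizing conditions of the nested construction on $(G_1*G_2)*G_3$ and invoke the uniqueness clause (``such a $c$ exists and is uniquely determined by these conditions''). For coherence, the paper simply observes, following \cite{Rolfsen17}, that all five bracketings of a four-fold product carry \emph{literally identical} orderings (the associator is the identity on underlying words), so the diagrams commute trivially; your argument---that the structure maps are morphisms of $\Circ$, the diagrams commute in the category of groups where free product is the coproduct, and the faithful forgetful functor reflects equality of composites---is a more formal but equally valid justification, and it additionally handles naturality of the associator and unitors, which the paper leaves implicit.
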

\begin{proof}
The trivial group with trivial ordering provides the necessary identity.  Given circularly-ordered groups $(G_1, c_1), (G_2, c_2), (G_3, c_3)$ there is a natural isomorphism of groups $(G_1 *G_2)*G_3 \cong G_1*(G_2*G_3)$.  We need to check that the above construction of a circular ordering of these respective groups is associative.  To see this, we introduce the notation $c_{1,2}$, $c_{2,3}$, $c_{(1, 2),3}$ and $c_{1 ,(2,3)}$ to denote the orderings of $G_1 *G_2$, $G_2*G_3$, $(G_1 *G_2)*G_3$ and $G_1*(G_2*G_3)$ that arise from the construction given above.  Let $(w_1, w_2, w_3)$ be any triple of words in the alphabet $G_1 \cup G_2 \cup G_3$.  If the $w_i$ are not all distinct then it is easy to confirm that $c_{(1, 2),3}(w_1, w_2, w_3) = c_{1 ,(2,3)}(w_1, w_2, w_3)$, so assume they are all distinct.

By left-multiplying appropriately, we may replace $(w_1, w_2, w_3)$ with a new triple $(g_1v_1, g_2v_2, g_3v_3)$ where $v_i$ are words in the alphabet $G_1 \cup G_2 \cup G_3$ and the $g_i$ are distinct.  Moreover, by left-invariance 
\[c_{(1, 2),3}(w_1, w_2, w_3) = c_{(1, 2),3}(g_1v_1, g_2v_2, g_3v_3) \quad \text{and} \quad c_{1 ,(2,3)}(w_1, w_2, w_3) = c_{1 ,(2,3)}(g_1v_1, g_2v_2, g_3v_3).
\]

Now since the $g_i$ are distinct, the only permissible operations which reduce $(g_1v_1, g_2v_2, g_3v_3)$ are of the third type (this is true whether we reduce in $(G_1* G_2) * G_3$ or $G_1 * (G_2 *G_3)$).  In the case that $g_i$ are all distinct (say $g_i \in G_i$ for all $i$), then we compute (via operations of type (3), with reductions taking place in the groups indicated by the subscripts):
\[ c_{(1, 2),3}(g_1v_1, g_2v_2, g_3v_3) = c_{(1, 2),3}(g_1v'_1, g_2v'_2, g_3) = c_{1,2}(g_1v'_1, g_2v'_2, id) = c_{1,2}(g_1, g_2, id) = 1.
\]
where $v_1', v_2' \in G_1 * G_2$.  Similarly
\[
c_{1 ,(2,3)}(g_1v_1, g_2v_2, g_3v_3) = c_{1 ,(2,3)}(g_1, g_2v'_2, g_3v'_3) = c_{2,3}(id, g_2v'_2, g_3v'_3) = c_{2,3}(id, g_2, g_3) =1.
\]
where $v_1', v_2' \in G_2 * G_3$.  Thus in thus case, $c_{(1, 2),3}$ and $c_{1 ,(2,3)}$ agree.  

When the $g_i$ all lie in distinct groups $G_i$ and the ordering of the triple $(g_1, g_2, g_3)$ differs from the previous case by an odd permutation in $S_3$, we find via similar computation that $c_{(1, 2),3}$ and $c_{1 ,(2,3)}$ agree.  

Next, when all $g_i$ lie in the same group, say $G_j$, we find (after reductions in the appropriate free products) that 
\[ c_{(1, 2),3}(g_1v_1, g_2v_2, g_3v_3) = c_j(g_1, g_2, g_3) = c_{1 ,(2,3)}(g_1v_1, g_2v_2, g_3v_3).
\]

The cases when exactly two $g_i$ lie in the same $G_j$ are similar in that they reduce to the value of $c_j$ on a particular triple, and we leave them to the reader.  We conclude that $c_{1, (2,3)} = c_{(1,2), 3}$.

Last we check the coherence conditions.  As in the case of \cite[Theorem 8]{Rolfsen17}, this amounts to observing that for circularly-ordered groups $(G_i, c_i)$ with $i=1, 2, 3, 4$, our constructed circular orderings of the groups 
\[ ((G_1 * G_2)*G_3)*G_4, (G_1*(G_2*G_3))*G_4, (G_1*G_2)*(G_3*G_4), 
\]
\[G_1*(G_2*(G_3*G_4)) \mbox{ and } G_1*((G_2*G_3)*G_4)
\]
are identical, by associativity of the construction.
\end{proof}

We finish with the following corollary, which is an immediate consequence of the equivalence of the categories $\Circ$ and $\LOs$.

\begin{corollary}
The category $\LOs$ is a tensor category.
\end{corollary}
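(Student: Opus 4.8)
The plan is to transport the tensor structure on $\Circ$ just established across the categorical equivalence $\Circ \cong \LOs$ of Proposition \ref{categorical-equivalence}. Recall that this equivalence is witnessed by the functors $L : \Circ \to \LOs$ and $Q : \LOs \to \Circ$ together with the natural isomorphisms $\nu : LQ \simeq \mathbf{1}_{\LOs}$ and $\eta : QL \simeq \mathbf{1}_{\Circ}$ constructed in its proof. Since a monoidal structure can always be carried along an equivalence of categories so as to make both functors strong monoidal, the existence of the bifunctor $\otimes$ on $\Circ$ yields a tensor structure on $\LOs$; the bulk of the work is simply to record the transported data and verify coherence.

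First I would define the candidate bifunctor $\boxtimes : \LOs \times \LOs \to \LOs$ on objects by
\[
A \boxtimes B := L\bigl(Q(A) \otimes Q(B)\bigr),
\]
and on morphisms by $f \boxtimes g := L\bigl(Q(f) \otimes Q(g)\bigr)$. Bifunctoriality is immediate, as this is a composite of the bifunctor $\otimes$ with the functors $L$ and $Q$, each of which preserves identities and composition by Lemma \ref{functors}. The unit object is $\mathbf{1}_\LOs := L(\mathbf{1}_\Circ)$, where $\mathbf{1}_\Circ = (\{id\},c_1)$ is the trivial group with its trivial ordering; explicitly $L(\mathbf{1}_\Circ) = (\ZZ,<,1)$.

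Next I would build the associator from that of $\Circ$. Using the natural isomorphism $\eta : QL \simeq \mathbf{1}_\Circ$, one obtains
\[
(A \boxtimes B) \boxtimes C = L\bigl(QL(QA \otimes QB) \otimes QC\bigr) \xrightarrow{\ L(\eta \otimes \mathrm{id})\ } L\bigl((QA \otimes QB) \otimes QC\bigr),
\]
and symmetrically $A \boxtimes (B \boxtimes C) \cong L\bigl(QA \otimes (QB \otimes QC)\bigr)$. Composing these with $L$ applied to the associator of $\Circ$ produces the associativity isomorphism $(A \boxtimes B) \boxtimes C \cong A \boxtimes (B \boxtimes C)$, and the left and right unitors arise in the same way from the unitors of $\Circ$ together with $\eta$ and the isomorphism $Q(\mathbf{1}_\LOs) = QL(\mathbf{1}_\Circ) \cong \mathbf{1}_\Circ$.

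The only step requiring genuine care is verifying the pentagon and triangle coherence axioms for this transported data, and this is where I expect the main obstacle to lie. These need not be checked elementwise: they follow formally from the corresponding axioms in $\Circ$ (established in the theorem just proved) together with the naturality of $\eta$, since every arrow in the transported pentagon is the image under $L$ of a composite of associators and instances of $\eta$ whose commutativity is already guaranteed in $\Circ$. This is precisely the standard transport-of-monoidal-structure argument, so I would either invoke it directly or exhibit the single naturality square that reduces the transported pentagon to the pentagon for $\otimes$. With coherence in hand, $(\LOs, \boxtimes, \mathbf{1}_\LOs)$ is a tensor category, completing the proof.
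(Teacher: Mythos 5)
Your proposal is correct and matches the paper's approach: the paper proves this corollary simply by declaring it an immediate consequence of the equivalence $\Circ \cong \LOs$ from Proposition \ref{categorical-equivalence}, which is exactly the transport-of-monoidal-structure argument you carry out. You have merely made explicit (via $A \boxtimes B := L(Q(A)\otimes Q(B))$, the unit $L(\{id\},c_1)=(\ZZ,<,1)$, and coherence inherited through the natural isomorphisms $\eta$ and $\nu$) the standard details the paper leaves implicit.
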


\bibliographystyle{plain}
\bibliography{Bibliography}


\end{document}